\documentclass[11pt]{amsart}

\setlength{\textwidth}{\paperwidth}
\addtolength{\textwidth}{-2in}
\calclayout

\usepackage{color}
\usepackage{verbatim}
\usepackage[noadjust]{cite}

\usepackage[utf8x]{inputenc}
\usepackage{t1enc}
\usepackage[english]{babel}
\usepackage{pgfplots}
\pgfplotsset{compat=1.15}
\usepackage{mathrsfs}
\usetikzlibrary{arrows}

\definecolor{ffqqqq}{rgb}{1.,0.,0.}
\definecolor{uuuuuu}{rgb}{0.26666666666666666,0.26666666666666666,0.26666666666666666}

\usepackage{amsmath,amssymb,amsthm}
\usepackage{mathrsfs,esint}
\usepackage{graphicx,wrapfig}
\usepackage[format=hang]{caption}
\newcommand{\R}{\mathbb{R}}

\newcommand{\N}{\mathbb{N}}
\newcommand{\Z}{\mathbb{Z}}

\newcommand{\A}{\mathcal{A}}

\newcommand{\Ha}{\mathcal{H}}

\newcommand{\eps}{\varepsilon}

\newcommand{\loc}{\text{loc}}
\newcommand{\Om}{\Omega}

\newcommand{\bmat}{\begin{bmatrix}}
\newcommand{\emat}{\end{bmatrix}}

\newcommand{\wtil}{\widetilde}

\newcommand{\oXeps}{\overline X_\eps}
\newcommand{\vcapXp}{\operatorname{cap}_p^{\oXeps}}
\newcommand{\vcap}{\operatorname{cap}}

\providecommand*{\vint}[1]{\mathchoice
          {\mathop{\vrule width 5pt height 3 pt depth -2.5pt
                  \kern -9pt \kern 1pt\intop}\nolimits_{\kern -5pt{#1}}}
          {\mathop{\vrule width 5pt height 3 pt depth -2.6pt
                  \kern -6pt \intop}\nolimits_{\kern -3pt{#1}}}
          {\mathop{\vrule width 5pt height 3 pt depth -2.6pt
                  \kern -6pt \intop}\nolimits_{\kern -3pt{#1}}}
          {\mathop{\vrule width 5pt height 3 pt depth -2.6pt
                  \kern -6pt \intop}\nolimits_{\kern -3pt{#1}}}}
\newcommand*{\jint}{\fint}
\DeclareMathOperator{\Mod}{Mod}

\DeclareMathOperator{\dist}{dist}
\DeclareMathOperator{\BCap}{Cap}
\DeclareMathOperator{\diam}{diam}
\DeclareMathOperator{\rad}{rad}
\DeclareMathOperator{\supt}{supt}

\DeclareMathOperator{\Id}{Id}

\numberwithin{equation}{section}
\theoremstyle{plain}
\newtheorem{thm}[equation]{Theorem}

\newtheorem{prop}[equation]{Proposition}

\newtheorem{lem}[equation]{Lemma}

\theoremstyle{definition}

\newtheorem{defn}[equation]{Definition}

\newtheorem{remark}[equation]{Remark}

\newtheorem{exa}[equation]{Example}

\makeatletter
\def\blfootnote{\xdef\@thefnmark{}\@footnotetext}
\makeatother

\begin{document}

\title[Well-posedness for inhomogeneous Dirichlet problems]{Well-posedness of Dirichlet boundary value problems for reflected fractional $p$-Laplace-type inhomogeneous equations in compact doubling metric measure spaces}

\blfootnote{2020 {\it Mathematics Subject Classification:} Primary: 46E36, Secondary: 31E05, 35R11, 60G22}
\blfootnote{{\it Keywords and phrases:} Besov spaces, Newton-Sobolev spaces, doubling measure, codimensional relationship, fractional $p$-Laplacian,
inhomogeneous Dirichlet problem, hyperbolic filling, Kellogg property, Wiener criterion, uniform domains.}

\author{Josh Kline}
\address{Department of Mathematical Sciences, University of Cincinnati, P.O. Box 210025, Cincinnati, OH 45221-0025, USA.}
\email{klinejp@ucmail.uc.edu}
\author{Feng Li}
\address{Department of Mathematical Sciences, University of Cincinnati, P.O. Box 210025, Cincinnati, OH 45221-0025, USA.}
\email{lif9@ucmail.uc.edu}
\author{Nageswari Shanmugalingam}
\address{Department of Mathematical Sciences, University of Cincinnati, P.O. Box 210025, Cincinnati, OH 45221-0025, USA.}
\email{shanmun@uc.edu} 
\thanks{\noindent {\bf Acknowledgment:} J.K.'s research was partially funded by grant DMS~\#2054960 and a Taft Dissertation 
	Fellowship from the Taft Research Center.
	N.S.'s research was partially supported by grant DMS~\#2054960.}
	

\date{\today}

\maketitle

\begin{abstract}
In this paper we consider the setting of a locally compact, non-complete metric measure space $(Z,d,\nu)$
equipped with a doubling measure $\nu$,
under the condition that the boundary $\partial Z:=\overline{Z}\setminus Z$
(obtained by considering the completion of $Z$) supports a Radon measure $\pi$
which is in a $\sigma$-codimensional relationship to $\nu$ for some $\sigma>0$. 
We explore existence, uniqueness, comparison property, and stability properties of solutions to 
inhomogeneous Dirichlet problems associated with certain nonlinear nonlocal operators on $Z$. 
We also establish interior regularity of solutions when the inhomogeneity data is in an
$L^q$-class for sufficiently large $q>1$, and verify a Kellogg-type property when the inhomogeneity data vanishes
and the Dirichlet data is continuous.
\end{abstract}

\section{Introduction}

	In the present paper, we are interested in a nonlocal Dirichlet problem, namely the one given in 
	Definition~\ref{def:Dirichlet1} in the nonsmooth setting of compact metric measure space equipped with a doubling measure.
	The Dirichlet condition is imposed in the sense of traces~\eqref{eq:trace-defn}.
	The operator in question is the trace of a $p$-Laplace-type operator on a ``higher dimensional'' uniform domain that
	sees the compact metric space as its boundary. This is part of an overarching program of
	exploring properties of the Dirichlet-to-Neumann operator on the compact space, first initiated in the
	setting of complete doubling metric measure spaces in~\cite{EGKSS, CKKSS, CGKS}.
	In the classical Euclidean setting, with $Z$ a bounded smooth domain in $\R^n$ and $p=2$, 
	the problem stated in Definition~\ref{def:Dirichlet1} corresponds to the problem
	\[
	\begin{cases}
		(-\Delta_\Om)^\theta u=G&\text{ in }\Omega\\
		u=f&\text{ on }\partial \Omega.
	\end{cases}
	\]
	Here, $\Delta_\Om$ is the uniformly elliptic divergence-form operator on $\Om$ given by
	$\Delta_\Om u(x)=-\text{div}(A(x)\nabla u(x))$ on the Euclidean domain $\Om$, see for example~\cite{CSt}.
	We keep in mind that there are other approaches to setting up fractional/nonlocal problems in the Euclidean
	setting as well, as for example found in the works of Warma~\cite{Warma}, which were based
	directly on the Besov energy, in contrast to the approach of~\cite{CSt}, which is based more on
	the trace of a Laplace-Beltrami operator on a higher dimensional space. Unless the domain in question is the
	entire Euclidean space, it is not known that the operator studied by Caffarelli and Stinga~\cite{CSt} is the 
	same one studied in~\cite{Warma}. When the Euclidean domain is the entire Euclidean space and $p=2$,
	the approach of constructing the fractional operator via such an extension agrees with the one studied in~\cite{Warma}
	and with the fractional power of the classical Laplacian on the Euclidean space.  For more studies involving various aspects of analogs of
	this problem in the Euclidean setting, see \cite{CS, CSt, ST, KLL, BII, Rossi, Warma, GuanMa, JaBjo}, for a sampling, and as well as \cite{FF} in the Carnot group setting. In the more general metric setting,
	if the geometric properties of the metric space allows for a doubling measure that supports a $2$-Poincar\'e inequality,
	then the three approaches of constructing fractional operators yields the same operator as well, see for instance~\cite{EGKSS};
	however, when the space is equipped with a doubling measure but does not support a $2$-Poincar\'e inequality,
	there may not be a natural choice of a Laplacian as a local operator; in this case, the approaches of Warma~\cite{Warma}
	and~\cite{CSt} may differ; the present paper focuses on the approach of~\cite{CSt} due to its links to the
	Dirichlet-to-Neumann operators. Moreover, we consider non-linear analogs of fractional powers $p$-Laplace operator
	even in the setting where there is no natural (local) $p$-Laplace operator. Examples such as the vonKoch snowflake curve,
	the Sierpinski carpet, and more general fractal objects do not have a natural $p$-Laplace operator, but when
	equipped with their natural Hausdorff measures, are doubling spaces in which the results of the present paper apply
	but no other approaches are currently available, see Section~\ref{sec:Examples}.

	Aspects of the above problem have also been studied in the setting of Dirichlet forms in~\cite{BGMN,Chen} (for the case $p=2$).
This interpretation is known in literature on probability as the 
	problem corresponding to the killed process.
	In considering the killed process, the problem can be re-stated as an energy minimization problem, but the associated
	the energy  takes into account the oscillation of the functions outside of the domain as well, subject to the constraint that
	the functions agree with $f$ outside the domain. 
	More particularly, given a function $u$ on $\R^n$, the energy $\mathcal{E}_G(u)$ is a globally defined \emph{non-local} energy
	whose value is changed even if $u$ is only changed outside of the domain $\Om$. One then attempts
	to find a function $u$ with minimal energy among all functions $v$ with $v=f$ in $\R^n\setminus\Om$.
	Thus, if we perturb the value of $f$ far away from $\Om$, the 
	solution to the above minimization problem will change even though the boundary values of $f$ may not change.
	This problem is well-understood in the Euclidean setting, and the corresponding function-space where the
	energy has as a component 
	the globally-defined Besov energy associated with the Besov
	space $B^\theta_{p,p}(\R^n)$; these are functions $u\in L^p_{loc}(\R^n)$ for which
	\[
	\int_{\R^n}\int_{\R^n}\frac{|u(y)-u(x)|^p}{|y-x|^{n+\theta p}}\, dy\, dx<\infty.
	\]
	We refer the interested reader to~\cite{Chen, KLL,BII, Rossi, JaBjo, GuanMa} and 
	the references therein for more on this topic.
	
	In contrast, in this paper we are interested in trace processes that are related to the so-called reflected jump processes, or censored processes, see \cite{BBC}.  In such situations, competitor functions need only address the boundary data in the sense of traces, rather than agreeing with prescribed data in the entirety of the exterior of the domain. Thus, if we perturb the value of $f$ far away from $\Om$, the 
	solution to the above problem will not change because the boundary values of $f$ does not change.
	This is a more nuanced problem, for one also then needs to make sense of traces of Besov functions, given that
	Besov functions can be arbitrarily perturbed on sets of measure zero.
	Due to this difficulty, this problem does not have as extensive a literature presently, in comparison to the problem related
	to the killed process. In the Euclidean setting, this problem has been studied in~\cite{Warma},  and~\cite{Warma} 
has an excellent exposition on the background related to the problem in the Euclidean setting, with
substantial literature in the bibliography.

Our precise setting is the following: we consider a a locally compact, non-complete bounded metric measure space $(Z,d,\nu)$, with $\nu$ a doubling measure, whose boundary is $\partial Z=\overline Z\setminus Z$, where $\overline Z$ is the completion of $Z$.  We extend $\nu$ to $\partial Z$ by the zero extension, so that $\nu(\partial Z)=0$.   For $\sigma>0$, we then equip $\partial Z$ with the codimension $\sigma$ Hausdorff measure $\pi:=\Ha^{-\sigma}_\nu$, see \eqref{eq:def-pi}, and assume that $0<\pi(\partial Z)<\infty$.  In order to rigorously frame our problem given by Definition~\ref{def:Dirichlet1} in this setting, we first establish the following trace theorem pertaining to Besov spaces on $Z$. 

\begin{thm}\label{thm:TraceTheorem}
Let $0<\theta<1$ and $1<p<\infty$, and let $u\in B^\theta_{p,p}(\overline Z,\nu)$.
\begin{itemize}
\item[(i)]If $p>\max\{1,\sigma/\theta\}$ and if there is a sequence $u_k\in B^\theta_{p,p}(\overline Z,\nu)$, compactly supported in $Z$, such that $u_k\to u$ in $B^\theta_{p,p}(\overline Z,\nu)$, then for $\pi$-a.e.\ $\zeta\in\partial Z$, 
\begin{equation}\label{eq:IntroTrace}
\lim_{r\to 0^+}\fint_{B(\zeta,r)}|u|^pd\nu=0.
\end{equation}  
\item[(ii)] If \eqref{eq:IntroTrace} holds for $\BCap_{\theta,p}^{\overline Z}$-a.e.\ $\zeta\in\partial\Omega$, then there is a sequence $u_k\in B^\theta_{p,p}(\overline Z,\nu)$, compactly supported in $Z$, such that $u_k\to u$ in $B^\theta_{p,p}(\overline Z,\nu)$. 
\end{itemize}
Furthermore, if $p>\max\{1,\sigma/\theta\}$ and $\pi$ is $\sigma$-codimensional Ahlfors regular with respect to $\nu$ (see Definition~\ref{def:codim-regular}), then there exist bounded  linear trace and extension operators
\[
T_Z:B^\theta_{p,p}(\overline Z,\nu)\to B^{\theta-\sigma/p}_{p,p}(\partial Z,\pi),\qquad E_Z:B^{\theta-\sigma/p}_{p,p}(\partial Z,\pi)\to B^\theta_{p,p}(\overline Z,\nu)
\]
such that $T_Z\circ E_Z$ is the identity map on $B^{\theta-\sigma/p}_{p,p}(\partial Z,\pi)$.  For each $u\in B^\theta_{p,p}(\overline Z,\nu)$, the trace operator also satisfies
\[
\lim_{r\to 0^+}\fint_{B(\zeta,r)}|u-T_Zu(\zeta)|d\nu=0
\]
for $\pi$-a.e.\ $\zeta\in\partial Z$.
\end{thm}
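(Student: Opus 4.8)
The plan is to establish the trace characterization (i)–(ii) first, and then derive the bounded trace/extension operators by a Whitney-decomposition argument on the uniform domain $Z$, following the now-standard template for Besov trace theorems on metric spaces (as in the work of Björn–Björn–Shanmugalingam and Jonsson–Wallin). For part (i), I would argue by contradiction combined with a covering/capacitary argument: if the limit in \eqref{eq:IntroTrace} fails on a set $E\subset\partial Z$ with $\pi(E)>0$, then $E$ has positive $\BCap_{\theta,p}^{\overline Z}$-capacity, since $p>\sigma/\theta$ forces sets of positive codimension-$\sigma$ Hausdorff measure to have positive $(\theta,p)$-capacity (a Frostman-type estimate, presumably recorded earlier in the paper). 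Meanwhile, the existence of the compactly supported approximating sequence $u_k\to u$ means $u$ has ``$(\theta,p)$-capacitary trace zero'' on $\partial Z$ — each $u_k$ trivially satisfies \eqref{eq:IntroTrace} at every boundary point, and the Besov convergence transfers this to $u$ up to a set of capacity zero via a telescoping/maximal-function estimate on Besov functions. Combining these gives $\pi(E)=0$, a contradiction.

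For part (ii), the construction of the approximating sequence is the reverse direction: given that \eqref{eq:IntroTrace} holds $\BCap_{\theta,p}^{\overline Z}$-q.e.\ on $\partial Z$, I would cut off $u$ near the boundary. Concretely, take a Lipschitz function $\eta_k$ that is $1$ on $\{x\in Z:\dist(x,\partial Z)\ge 1/k\}$ and $0$ on a neighborhood of $\partial Z$, with $|\nabla\eta_k|\lesssim k$, and set $u_k=\eta_k u$. One must check $u_k\in B^\theta_{p,p}(\overline Z,\nu)$ (a product-rule/fractional-Leibniz estimate) and that $\|u-u_k\|_{B^\theta_{p,p}}\to0$; the latter uses \eqref{eq:IntroTrace} at q.e.\ boundary point to control the double integral over the boundary layer, since the nonlocal seminorm contribution from the shrinking collar $\{ \dist(x,\partial Z)<1/k\}$ is governed by the vanishing of the $L^p$-averages of $u$ there. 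This is the step I expect to be the main obstacle: making the collar estimate quantitative requires a careful Whitney/Hölder-type decomposition of $Z$ near $\partial Z$ and summing geometric-series contributions, and one needs the co-area-type relationship between $\nu$ on thin collars and $\pi$ on $\partial Z$ (the codimension-$\sigma$ condition) to close the estimate with the exponent $\theta-\sigma/p>0$.

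Finally, for the trace and extension operators under the Ahlfors-regularity hypothesis, I would define $E_Z$ on $g\in B^{\theta-\sigma/p}_{p,p}(\partial Z,\pi)$ by a discrete convolution against a Whitney decomposition $\{Q_i\}$ of $Z$: set $E_Z g(x)=\sum_i \phi_i(x)\, g_{Q_i^*}$, where $\{\phi_i\}$ is a Lipschitz partition of unity subordinate to a dilated Whitney cover, $Q_i^*\subset\partial Z$ is a boundary ``shadow'' of $Q_i$ of comparable radius, and $g_{Q_i^*}$ is the $\pi$-average of $g$ over $Q_i^*$. The boundedness $\|E_Z g\|_{B^\theta_{p,p}(\overline Z,\nu)}\lesssim\|g\|_{B^{\theta-\sigma/p}_{p,p}(\partial Z,\pi)}$ is the Jonsson–Wallin computation adapted to the doubling/codimension setting: bound the Besov double integral of $E_Zg$ by splitting into near-diagonal and far-diagonal parts, use the partition-of-unity gradient bounds and the Ahlfors regularity of $\pi$ to convert $\nu$-integrals over Whitney cubes into $\pi$-integrals over their shadows, and recognize the resulting expression as (a constant times) the Besov seminorm of $g$ at smoothness $\theta-\sigma/p$. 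For $T_Z$, use the quasicontinuous representative and set $T_Zu(\zeta)$ equal to the limit of the $\nu$-averages in \eqref{eq:IntroTrace}; boundedness follows from a dual (trace) estimate of the same Whitney type, and $T_Z\circ E_Z=\mathrm{Id}$ is a direct computation since the $\nu$-average of $E_Zg$ over $B(\zeta,r)$ converges to the $\pi$-Lebesgue value of $g$ at $\pi$-a.e.\ $\zeta$, by the Lebesgue differentiation theorem for the Ahlfors-regular measure $\pi$. The last displayed limit is then exactly the statement that $T_Zu(\zeta)$ is the $\nu$-Lebesgue value of $u$ at $\zeta$, which is how $T_Zu$ was defined.
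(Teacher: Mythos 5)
Your proposal proceeds intrinsically on $Z$, whereas the paper never argues intrinsically at all: it lifts everything to the uniformized hyperbolic filling $\overline X_\eps$ of $\overline Z$ via the bounded operators $E_X,T_X$ of \cite{BBS}, where $\mu_\beta$ is doubling and supports a $(1,1)$-Poincar\'e inequality, and then quotes Newtonian machinery there --- the zero-trace characterization of \cite{KKST} and the density result \cite[Theorem~5.46]{BB} for (i) and (ii), the trace--extension theorems of \cite{Maly,GS} for the uniform domain $\overline X_\eps\setminus\partial Z$ to build $T_Z=T_{X,\partial}\circ E_X$ and $E_Z=T_X\circ E_{X,\partial}$, and chain-of-balls/Poincar\'e estimates together with \cite[Proposition~3.11]{GKS} and the capacity comparison \cite[Proposition~13.2]{BBS} to convert $\BCap_p^{\oXeps}$-q.e.\ statements into $\pi$-a.e.\ ones. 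Your intrinsic route must therefore rebuild nontrivial machinery on a space where no Poincar\'e inequality (and no connectivity of $Z$) is assumed, and at the two places where you lean hardest this is a genuine gap rather than a routine adaptation. In (i), the step ``Besov convergence transfers the vanishing from $u_k$ to $u$ up to a set of capacity zero via a telescoping/maximal-function estimate'' is exactly the content to be proved: you need a capacitary weak-type bound for $\zeta\mapsto\sup_{r>0}\fint_{B(\zeta,r)}|v|^p\,d\nu$ at points of the $\nu$-null set $\partial Z$ (note it is $L^p$-averages that are required), together with the fact that sets of positive $\pi$-measure have positive $\BCap^{\overline Z}_{\theta,p}$-capacity when $p>\sigma/\theta$; neither is recorded intrinsically in the paper --- both are obtained only through the filling --- and you supply no argument for either.

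The most serious gap is in (ii). With $u_k=\eta_k u$ and $|\nabla\eta_k|\lesssim k$, controlling $\|u-u_k\|_{B^\theta_{p,p}}$ requires, after integrating the kernel in the far variable, an estimate of the form
\[
\int_{\{0<\dist(x,\partial Z)<1/k\}}\frac{|u(x)|^p}{\dist(x,\partial Z)^{\theta p}}\,d\nu(x)\longrightarrow 0 ,
\]
a fractional Hardy-type collar bound. The hypothesis of (ii) is only the qualitative pointwise vanishing \eqref{eq:IntroTrace} at $\BCap^{\overline Z}_{\theta,p}$-q.e.\ boundary point; this does not yield finiteness, let alone decay, of that functional, and fractional Hardy inequalities generally need capacity-density (fatness) assumptions on $\partial Z$ that are not made here. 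So the Lipschitz-cutoff argument as proposed does not close; the paper instead shows $E_Xu\in N^{1,p}_0(\overline X_\eps\setminus\partial Z,\mu_\beta)$ via \cite{KKST}, approximates there by compactly supported Newtonian functions, and pushes back with $T_X$. Your Whitney/Jonsson--Wallin construction of $E_Z$ and the telescoping definition of $T_Z$ are plausible in outline (the nonlocal seminorm does control differences of ball averages without chains), but their boundedness, the identity $T_Z\circ E_Z=\Id$, and the $\pi$-a.e.\ Lebesgue-point property all hinge on the same unproved maximal/capacitary estimates, so the final part of the theorem inherits the gaps above.
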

	
Theorem~\ref{thm:TraceTheorem} is proven via Proposition~\ref{prop:1}, Proposition~\ref{prop:2}, and Lemma~\ref{lem:traceZ}, using trace and extension results from \cite{BBS} between Besov spaces on $Z$ and Newton-Sobolev spaces on hyperbolic fillings of $Z$.

Having established the relevant trace theory, we then study well-posedness for the inhomogeneous Dirichlet problem given by Definition~\ref{def:Dirichlet1}.  The nonlocal (and nonlinear) operator associated with the problem is given via an energy form $\mathcal{E}_T$ as in Definition~\ref{def:ET}.  This operator was introduced in \cite{CKKSS}, where the approach of \cite{CS, CSt} towards constructing nonlocal operators via Dirichlet-to-Neumann maps was adapted to our current metric setting. The following theorem is a compilation of existence, uniqueness, comparison property,
and stability results established in this paper. 

\begin{thm}\label{thm:main}
Let $(Z,d)$ be a locally compact, non-complete metric space equipped with a doubling measure $\nu$, and $\sigma>0$ such that 
$\partial Z:=\overline{Z}\setminus Z$ is equipped with a Radon measure $\pi$ that is $\sigma$-codimensional with respect to $\nu$.
Let $\max\{1,\sigma\}<p<\infty$ and $\sigma/p<\theta<1$. Then the following claims hold true:
\begin{enumerate}
	\item For each $f\in B^{\theta-\sigma/p}_{p,p}(\partial Z,\pi)$ and each $G\in L^{p'}(Z,\nu)$ there exists exactly one
$u\in B^\theta_{p,p}(\overline Z,\nu)$ such that 
\begin{align*}
	\mathcal{E}_T(u,v)&=\int_Z G\, v\, d\nu\text{ whenever }v\in B^\theta_{p,p}(\overline Z,\nu)\text{ with }T_Zv=0\ \ \ \text{ $\pi$-a.e.~in }\partial Z,\\
	T_Zu&=f \ \ \ \text{ $\pi$-a.e. in }\partial Z.
\end{align*}
Such a solution $u$ is called a solution to the $(f,G)$-inhomogeneous Dirichlet problem.
    \item If $f_1,f_2\in  B^{\theta-\sigma/p}_{p,p}(\partial Z,\pi)$ with $f_1\le f_2$ $\pi$-a.e.~in $\partial Z$ and 
$G\in L^{p'}(Z,\nu)$, then with $u_i$ the solution to the $(f_i,G)$-inhomogeneous Dirichlet problem for $i=1,2$, we have that
$u_1\le u_2$ $\nu$-a.e.~in $Z$.
	\item If $(G_k)_k$ is a Cauchy sequence in $L^{p'}(Z,\nu)$ with limit $G$ and $(f_k)_k$ is a Cauchy sequence in 
$B^{\theta-\sigma/p}_{p,p}(\partial Z,\pi)$ with limit $f$, and if $u_k$ is the solution to the $(f_k,G_k)$-inhomogeneous Dirichlet problem,
then $u_k\to u$ in $B^\theta_{p,p}(\overline Z,\nu)$ with $u$ the solution to the $(f,G)$-inhomogeneous problem.
\end{enumerate}
\end{thm}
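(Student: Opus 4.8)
The plan is to run the standard direct-method and monotone-operator machinery, drawing on the structural properties of the energy form $\mathcal{E}_T$ supplied by Definition~\ref{def:ET} (and \cite{CKKSS}): $\mathcal{E}_T(u,v)$ is linear in $v$; the map $u\mapsto\mathcal{E}_T(u,\cdot)$ is bounded, continuous on bounded subsets of $B^\theta_{p,p}(\overline Z,\nu)$, and strictly monotone---indeed uniformly monotone on bounded sets, as is typical for $p$-Laplace-type operators; $N(u):=\mathcal{E}_T(u,u)^{1/p}$ is a seminorm (since $\mathcal{E}_T$ is, per Definition~\ref{def:ET}, obtained by minimizing a $p$-homogeneous convex energy over extensions to a hyperbolic filling of $Z$); $\mathcal I(u):=\tfrac1p\mathcal{E}_T(u,u)$ is convex, weakly lower semicontinuous, and Gateaux differentiable with $\mathcal I'(u)[v]=\mathcal{E}_T(u,v)$; and $\mathcal I$ is submodular, $\mathcal I(a\wedge b)+\mathcal I(a\vee b)\le\mathcal I(a)+\mathcal I(b)$, which follows from the pointwise lattice identity $g_{W_1\wedge W_2}^p+g_{W_1\vee W_2}^p=g_{W_1}^p+g_{W_2}^p$ for upper gradients on the filling together with the fact that truncation commutes with taking traces there. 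The one essential analytic input not of this ``expected'' type is a Friedrichs inequality $N(v)\ge c\,\|v\|_{B^\theta_{p,p}(\overline Z,\nu)}$ on the closed subspace $V_0:=\{v\in B^\theta_{p,p}(\overline Z,\nu):T_Zv=0\ \pi\text{-a.e.}\}$; this is the crux, and I expect it to come from the description of $V_0$ in Theorem~\ref{thm:TraceTheorem} as the closure of Besov functions compactly supported in $Z$, combined with a Poincar\'e inequality on the hyperbolic filling. Finally, $B^\theta_{p,p}(\overline Z,\nu)$ is reflexive and separable for $1<p<\infty$, and, since $T_Z,E_Z$ are bounded and linear with $T_Z\circ E_Z=\Id$ (Theorem~\ref{thm:TraceTheorem}), the set $\mathcal K_f:=\{v\in B^\theta_{p,p}(\overline Z,\nu):T_Zv=f\ \pi\text{-a.e.}\}=E_Zf+V_0$ is nonempty, closed and convex.

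For (1), put $\phi:=E_Zf$ and minimize $J(w):=\mathcal I(\phi+w)-\int_Z G(\phi+w)\,d\nu$ over $w\in V_0$. Coercivity of $J$ on $V_0$ follows from $N(\phi+w)\ge N(w)-N(\phi)\ge c\,\|w\|_{B^\theta_{p,p}}-N(\phi)$ (seminorm property and Friedrichs) together with $\big|\int_Z G(\phi+w)\,d\nu\big|\le C\,\|G\|_{L^{p'}}\big(1+\|w\|_{B^\theta_{p,p}}\big)$, using the continuous embedding $B^\theta_{p,p}(\overline Z,\nu)\hookrightarrow L^p(Z,\nu)$ and $p>1$. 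Reflexivity and weak lower semicontinuity then produce a minimizer $w_0\in V_0$, and, $J$ being convex and Gateaux differentiable, minimality over the subspace $V_0$ yields the Euler--Lagrange identity $\mathcal{E}_T(\phi+w_0,v)=\int_Z Gv\,d\nu$ for all $v\in V_0$; hence $u:=\phi+w_0$ solves the $(f,G)$-problem. For uniqueness, if $u,\tilde u$ both solve, then $w:=u-\tilde u\in V_0$, and subtracting the two weak formulations and testing with $v=w$ gives $\mathcal{E}_T(u,w)-\mathcal{E}_T(\tilde u,w)=0$; strict monotonicity forces $N(w)=0$, whence $w=0$ by Friedrichs.

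For (2), let $u_i$ solve the $(f_i,G)$-problem, $i=1,2$, and consider $u_1\wedge u_2$ and $u_1\vee u_2$. These belong to $B^\theta_{p,p}(\overline Z,\nu)$ because $\min$ and $\max$ are $1$-Lipschitz, and the Lebesgue-value property in Theorem~\ref{thm:TraceTheorem} gives $T_Z(u_1\wedge u_2)=f_1\wedge f_2=f_1$ and $T_Z(u_1\vee u_2)=f_2$ $\pi$-a.e., so $u_1\wedge u_2\in\mathcal K_{f_1}$ and $u_1\vee u_2\in\mathcal K_{f_2}$. Write $\Phi(v):=\mathcal I(v)-\int_Z Gv\,d\nu$, so by (1) each $u_i$ is the unique minimizer of $\Phi$ over $\mathcal K_{f_i}$. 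Submodularity of $\mathcal I$ and the cancellation $\int_Z G(u_1\wedge u_2)\,d\nu+\int_Z G(u_1\vee u_2)\,d\nu=\int_Z Gu_1\,d\nu+\int_Z Gu_2\,d\nu$ give $\Phi(u_1\wedge u_2)+\Phi(u_1\vee u_2)\le\Phi(u_1)+\Phi(u_2)$, while minimality gives $\Phi(u_1)\le\Phi(u_1\wedge u_2)$ and $\Phi(u_2)\le\Phi(u_1\vee u_2)$; hence all four quantities are equal. In particular $u_1\wedge u_2$ minimizes $\Phi$ over $\mathcal K_{f_1}$, so by the uniqueness in (1), $u_1\wedge u_2=u_1$ $\nu$-a.e., i.e.\ $u_1\le u_2$ $\nu$-a.e.

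For (3), set $\phi_k:=E_Zf_k$ and $w_k:=u_k-\phi_k\in V_0$; testing the $k$-th weak formulation with $w_k$ and using uniform monotonicity, boundedness of the operator, the Friedrichs inequality, and the uniform bounds on $\|\phi_k\|_{B^\theta_{p,p}}$ and $\|G_k\|_{L^{p'}}$ (Cauchy sequences are bounded) shows $\sup_k\|u_k\|_{B^\theta_{p,p}}<\infty$. Let $u=\phi+w$ with $w\in V_0$ solve the $(f,G)$-problem, and note $\phi_k\to\phi:=E_Zf$ in $B^\theta_{p,p}(\overline Z,\nu)$ since $E_Z$ is bounded. Subtracting the $k$-th weak formulation from that for $u$ and testing with $w_k-w\in V_0$ gives
\[
\mathcal{E}_T(\phi+w_k,w_k-w)-\mathcal{E}_T(\phi+w,w_k-w)=\big[\mathcal{E}_T(\phi+w_k,w_k-w)-\mathcal{E}_T(u_k,w_k-w)\big]+\int_Z(G_k-G)(w_k-w)\,d\nu.
\]
By uniform monotonicity on the bounded set $\{\phi+w_k,\phi+w\}$ and Friedrichs, the left side dominates a positive power of $\|w_k-w\|_{B^\theta_{p,p}}$; on the right, the bracketed term is $o(1)\,\|w_k-w\|_{B^\theta_{p,p}}$ because $\|(\phi+w_k)-u_k\|_{B^\theta_{p,p}}=\|\phi-\phi_k\|_{B^\theta_{p,p}}\to0$ while $\mathcal{E}_T$ is continuous in its first slot on bounded sets, and the last term is at most $\|G_k-G\|_{L^{p'}}\|w_k-w\|_{L^p}=o(1)\,\|w_k-w\|_{B^\theta_{p,p}}$. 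Hence $\|w_k-w\|_{B^\theta_{p,p}}\to0$, and therefore $u_k=\phi_k+w_k\to\phi+w=u$ in $B^\theta_{p,p}(\overline Z,\nu)$. Throughout, the main obstacle is the Friedrichs inequality on $V_0$: it is what makes $\mathcal{E}_T$ coercive and hence the direct method and the monotone-operator estimates go through, everything else being the routine direct-method, lattice-comparison, and subtract-and-test bookkeeping once the properties of $\mathcal{E}_T$ from Definition~\ref{def:ET} are in hand.
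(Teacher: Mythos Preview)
Your outline is correct and tracks the paper's proof closely: part~(1) is the direct method with the Maz'ya/Friedrichs inequality on $V_0$ supplying coercivity (the paper's Theorem~\ref{thm:Mazya}, applied to $\overline u-f$ in the filling $\overline X_\eps$ where $\partial Z$ has positive $p$-capacity); part~(2) is the same lattice argument, your submodularity of $\mathcal I$ being exactly what the paper extracts from the pointwise identity for upper gradients of $\overline u_1\wedge\overline u_2$ and $\overline u_1\vee\overline u_2$ after noting that the $p$-harmonic extension minimizes energy among all extensions; and part~(3) is the subtract-and-test monotonicity computation.

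The one point to flag is your reliance in~(3) on ``$\mathcal{E}_T$ is continuous in its first slot on bounded sets.'' This is not among the listed structural properties after Definition~\ref{def:ET}, and it is not free: since $\mathcal{E}_T(a,v)$ is built from the \emph{$p$-harmonic} extension $\overline a$, continuity in $a$ amounts to stability of $p$-harmonic extensions to $X_\eps$ under perturbation of their boundary data on $\overline Z$. That is true (and follows from the same Lindqvist monotonicity estimates you are already using, applied on $X_\eps$), but it is essentially the content of the paper's Proposition~\ref{prop:GradientStability} with $G=0$. The paper therefore organizes~(3) as a two-step argument---first perturb $G$ with $f$ fixed (Proposition~\ref{prop:GradientStabilityNeumann}), then perturb $f$ with $G$ fixed (Proposition~\ref{prop:GradientStability}), and triangulate through an intermediate solution $w_k$ of the $(f_k,G)$-problem---working throughout in $\overline X_\eps$ where the monotonicity inequalities are pointwise and no nonlinear extension map intervenes. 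Your one-step route is valid but has not actually saved work; the continuity hypothesis you invoke is where the paper's two propositions live.
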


Theorem~\ref{thm:main} is proved in stages; the existence and uniqueness Claim~(1) is established as 
Theorem~\ref{thm:Inhom Existence}, 
comparison property Claim~(2) as
Proposition~\ref{prop:compare}, and the stability Claim~(3) as Theorem~\ref{thm:stable}.
The proof of existence of the solution is accomplished using the direct method of calculus of variation, but the
difficulty is in the details, as the problem is neither local nor is it homogeneous. To overcome this difficulty we
combined tools of potential theory related to Sobolev spaces with the direct method. The stability of the problem 
rests on two data here-the inhomogeneity data $G$ and the boundary data $f$. Given the nonlinearity and inhomogeneity, together
with the non-locality, of the problem, the standard way of proving stability for the $p$-Laplace equation does not work here,
and so we had to employ a two-step process in combination with the results from~\cite{BBS, CKKSS} in order to
establish the stability result. We point out here that if $\partial Z$ is empty, that is, there is no boundary data to consider,
the global results of~\cite{CKKSS} already yields the stability of solutions with respect to the inhomogeneity data $G$, but the
introduction of the non-empty boundary means that we cannot appeal to~\cite{CKKSS}.

In considering matters of continuity of solutions to the problem, we have the following result.

\begin{thm}\label{thm:main2}
	Under the setting of Theorem~\ref{thm:main}, let $f\in B^{\theta-\sigma/p}_{p,p}(\partial Z,\pi)$ and
$G\in L^{p'}(Z,\nu)$. Then with $Q_\nu$ the lower mass bound exponent associated with the measure $\nu$
in the sense of~\eqref{eq:LMB},
\begin{enumerate}
	\item if $G$ also belongs to $L^q(Z,\nu)$ for some $q>\max\{1,Q_\nu/(p\theta)\}$, then the solution $u$ to
	the $(f,G)$-inhomogeneous Dirichlet problem is locally H\"older continuous in $Z$.
	\item if $\partial Z$ is uniformly perfect and
	$f$ is continuous on $\partial Z$, then with $u$ the solution to the $(f,0)$-inhomogeneous Dirichlet problem, 
	we have that for every $\xi\in\partial Z$,
	\[
	   \lim_{Z\ni z\to \xi}u(z)=f(\xi).
	\]
\end{enumerate}
\end{thm}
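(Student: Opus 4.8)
The plan is to prove the two claims by quite different mechanisms: claim~(1) is an interior assertion and I would obtain it from the local regularity theory for weighted $p$-harmonic functions on the hyperbolic filling, whereas claim~(2) is a boundary assertion that I would reduce, via comparison, to the construction of a barrier at each point of $\partial Z$, the decisive input being a capacity density estimate extracted from uniform perfectness.

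For claim~(1), I would first fix an interior point $z_0\in Z$ and a radius $R>0$ with $\overline{B(z_0,2R)}$ compactly contained in $Z$, which is possible since $Z$ is open in $\overline Z$. Testing the weak identity of Theorem~\ref{thm:main}(1) against functions $\varphi\in B^\theta_{p,p}(\overline Z,\nu)$ compactly supported in $B(z_0,R)$ (for which $T_Z\varphi=0$), the solution $u$ satisfies a localized Euler--Lagrange relation whose right-hand side is $\int_Z G\,\varphi\,d\nu$. I would then pass to the hyperbolic filling $X$ of $\overline Z$ and use the trace/extension correspondence of~\cite{BBS} together with the description of $\mathcal{E}_T$ from~\cite{CKKSS}: the $p$-harmonic extension $\widetilde u$ of $u$ to $X$, relative to the weighted measure $\mu$ whose weight exponent is governed by $\sigma$, $\theta$ and $p$, is $p$-harmonic in the part of $X$ lying over $B(z_0,2R)$ and satisfies there a conormal (Neumann-type) condition on the $Z$-portion of $\partial X$ with data $G$, up to a nonlocal tail term coming from the values of $u$ outside $B(z_0,2R)$ which is continuous and hence harmless. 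Since $\mu$ is doubling and supports a $p$-Poincar\'e inequality on $X$, and since the local dimension seen by $\mu$ near the $Z$-portion of $\partial X$ is the one that turns the Serrin-type borderline exponent into $Q_\nu/(\theta p)$, the hypothesis $q>\max\{1,Q_\nu/(p\theta)\}$ makes the datum $G$ subcritical; the De Giorgi--Nash--Moser iteration for (quasi)minimizers with subcritical data then gives that $\widetilde u$ is bounded and locally H\"older continuous up to the $Z$-portion of $\partial X$, and restricting to that portion yields the local H\"older continuity of $u$ in $Z$. The point requiring care is precisely the identification of the borderline exponent: one must verify that the weight and the codimensional normalization conspire so that $L^q$ data with $q>Q_\nu/(p\theta)$ is genuinely subcritical for the weighted Neumann problem on $X$, and that the nonlocal tail does not interfere with the Caccioppoli inequality underpinning the iteration.

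For claim~(2), by replacing $(u,f)$ with $(-u,-f)$ it suffices to prove $\limsup_{Z\ni z\to\xi}u(z)\le f(\xi)$ for each $\xi\in\partial Z$. Given $\eps>0$, continuity of $f$ and compactness of $\partial Z$ allow me to choose $g\in\Lip(\partial Z)\subset B^{\theta-\sigma/p}_{p,p}(\partial Z,\pi)$ with $g\ge f$ on $\partial Z$ and $g(\xi)\le f(\xi)+\eps$; by the comparison property Theorem~\ref{thm:main}(2), the corresponding solutions obey $u\le u_g$ in $Z$, so it suffices to show $\limsup_{Z\ni z\to\xi}u_g(z)\le g(\xi)$. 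For this I would build a barrier at $\xi$, that is, a bounded supersolution $w$ for $\mathcal{E}_T$ with $T_Zw\ge g$ on $\partial Z$ and $\lim_{Z\ni z\to\xi}w(z)=g(\xi)$; the comparison principle between solutions and supersolutions then gives $u_g\le w$ near $\xi$ and finishes the argument. Following Maz'ya, the barrier is assembled from the $\mathcal{E}_T$-capacitary potentials of the dyadic annular pieces $\partial Z\cap\big(\overline{B(\xi,2^{-j})}\setminus B(\xi,2^{-j-1})\big)$, and the decisive input is the capacity density lower bound
\[
\BCap_{\theta,p}^{\overline Z}\big(\partial Z\cap\overline{B(\xi,r)},\,B(\xi,2r)\big)\ \ge\ c\,\BCap_{\theta,p}^{\overline Z}\big(\overline{B(\xi,r)},\,B(\xi,2r)\big),
\]
valid for all $\xi\in\partial Z$ and all small $r>0$ with $c>0$ independent of $\xi$ and $r$. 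Here is where the hypotheses enter: uniform perfectness of $\partial Z$ ensures that $\partial Z$ is nondegenerate at every scale around $\xi$, the $\sigma$-codimensional relationship between $\pi$ and $\nu$ converts the $\pi$-mass of these dyadic pieces into the correct proportion of $\nu(B(\xi,r))$, and the lower mass bound for $\nu$ together with the trace characterization of $\BCap_{\theta,p}^{\overline Z}$ from Theorem~\ref{thm:TraceTheorem} upgrades this into the displayed capacity comparison. Granting it, the associated Wiener-type sum $\sum_j\big(\text{capacity ratio at scale }2^{-j}\big)^{1/(p-1)}$ diverges at $\xi$, which is exactly the condition that forces the barrier to tend to $0$ at $\xi$; combining the barrier with the interior continuity of claim~(1) (applied with $G=0$) and the trace identity $T_Zu=f$ then produces the stated boundary limit. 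The hard part is the capacity density estimate: one must transmute the purely metric hypothesis of uniform perfectness, through the codimensional and mass-bound relations, into a genuine capacitary fatness statement for $\partial Z$ inside $\overline Z$, and then execute the barrier construction for the nonlocal nonlinear energy $\mathcal{E}_T$ rather than for a local $p$-energy.
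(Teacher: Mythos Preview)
For claim~(1) your approach is essentially the paper's: lift $u$ to its $p$-harmonic extension $\overline u$ on the filling, recognize that on balls $B(z_0,2R)$ with $\overline{B(z_0,2R)}\subset Z$ the solution satisfies exactly the Neumann-type problem of~\cite{CKKSS} (test functions supported in such balls have zero trace on $\partial Z$), and invoke the De Giorgi iteration from~\cite{MalShan,CKKSS}. One correction: there is no ``nonlocal tail term'' here. The whole point of the $\mathcal{E}_T$ construction is that $\overline u$ is genuinely $p$-harmonic on all of $X_\eps$, so once lifted the problem is purely local; you seem to be importing intuition from integral-kernel formulations of the fractional $p$-Laplacian that does not apply.

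For claim~(2) your route is genuinely different from the paper's, and harder. You propose to work intrinsically on $Z$: define supersolutions for $\mathcal{E}_T$, prove a comparison principle between solutions and supersolutions (the paper only proves comparison between \emph{solutions} sharing the same $G$), build barriers from $\mathcal{E}_T$-capacitary potentials of dyadic pieces of $\partial Z$, and show the Wiener sum diverges via a Besov-capacity density estimate. None of this supersolution/barrier machinery for the nonlocal energy is developed in the paper, and carrying it out would be substantial. The paper instead exploits the lift once more: the $p$-harmonic extension $\overline u$ solves the \emph{local} Dirichlet problem for the $p$-Laplacian on $\Omega=\overline X_\eps\setminus\partial Z$ with boundary data $f$ on $\partial\Omega=\partial Z$ (Lemma~\ref{lem:p-harmonic solutions}). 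Uniform perfectness of $\partial Z$ and the codimensional relationships are then used to show that $\partial Z$ is uniformly $p$-fat in $\overline X_\eps$ with respect to the Newton--Sobolev variational capacity $\vcap_p^{\overline X_\eps}$ (Lemma~\ref{lem:vcap estimates}), so every boundary point satisfies the classical Wiener criterion, and the Kellogg property follows immediately from the existing local theory~\cite{BjMacSh,BB}. What this buys is that all the delicate barrier/supersolution work happens in the local $p$-harmonic world where it is already done; your approach would need to redevelop it for $\mathcal{E}_T$.
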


Note that if $Q$ is the lower mass bound exponent associated with the auxiliary measure $\mu_\beta$ on the uniformized
hyperbolic filling $X_\eps$ of $\overline{Z}$ (as described in subsection~\ref{sec:HypFill}), then $Q_\nu=Q-\beta/\eps$. 

The interior regularity Claim~(1) of Theorem~\ref{thm:main2} is established in this paper via Theorem~\ref{thm:Holder}, and
the Kellogg property Claim~(2) of Theorem~\ref{thm:main2} is proved via Theorem~\ref{thm:HomogeneousKellogg}.
The interior regularity result follows immediately from~\cite{CGKS}, and Theorem~\ref{thm:Holder}
merely points this out. The Kellogg property would also follow immediately from~\cite{BjMacSh} if we know that
$\partial Z$ is uniformly $p$-fat (with respect to the potential theory related to the $p$-Laplacian on the filling $X_\eps$).
The uniform $p$-fatness is established in the proof of Theorem~\ref{thm:HomogeneousKellogg}, and then the
results of~\cite{BjMacSh} are invoked to complete the proof of the Kellogg property. We note here that we do not presently have
knowledge of Kellogg property if the inhomogeneity data $G$ does not vanish in a neighborhood of $\partial Z$,
and the classical proof for inhomogeneous but local $p$-Laplace equation does not apply here.

In summary, Theorem~\ref{thm:main} and Theorem~\ref{thm:main2} are established in Sections~5--7. Before doing so, we provide 
a description of the various notions associated with the nonsmooth metric setting in Section~2, and Sections~3 and~4 describe
traces of Sobolev and Besov functions at boundaries of domains.


\section{Preliminaries}

In this section we gather together the notions and tools used throughout the paper. The triple $(W,d_W,\mu_W)$ will stand in for a generic metric
measure space. For $w\in W$ and $r>0$ we denote the ball $\{z\in W\, :\, d_W(z,w)<r\}$ by $B_W(w,r)$. If the metric space $W$ is understood in context,
we can also drop the subscript and denote the ball by $B(w,r)$.

 Throughout this paper, we let $C$ denote a constant which depends only on the structural constants of the metric space (such as the doubling constant), unless otherwise specified.  Its precise value is not of interest to us and may change with each use, even within the same line.  Furthermore, given quantities $A$ and $B$, we will often use the notation $A\simeq B$ to mean that there exists a constant $C\ge 1$ such that $C^{-1}A\le B\le CA$.  Likewise, we use $A\lesssim B$ and $A\gtrsim B$ if the left and right inequalities hold, respectively.

\subsection{Metric and measure-theoretic notions}

\begin{defn}\label{def:unif-perf}
Given a metric space $(W,d_W)$ and a set $A\subset W$, we say that $A$ is \emph{uniformly perfect} if there is a constant $K\ge 2$ such that
whenever $w\in A$ and $0<r<\diam(A)$, the annulus $B(w,r)\setminus B(w,r/K)$ has a nonempty intersection with $A$. If $A=W$, we say that
the metric space $W$ is uniformly perfect.
\end{defn}

Given a set $A\subset W$, we
say that the measure $\mu_W$ is \emph{doubling on $A$}, if there exists $C_d\ge 1$ such that for all $w\in A$ and for all $r>0$, we have 
\begin{equation*}
0<\mu_W(B(w,2r))\le C_d\mu_W(B_W(w,r))<\infty.
\end{equation*}
When $A=W$, we merely say that $\mu_W$ is doubling.
If $\mu_W$ is doubling, then by iterating the above inequality, there exists $Q\ge 1$ and $C\ge 1$, depending on $C_d$, such that
\begin{equation}\label{eq:LMB}
\frac{\mu_W(B(w,r))}{\mu_W(B(z,R))}\ge C^{-1}\left(\frac{r}{R}\right)^Q
\end{equation}
for all $z\in W$, $0<r\le R$, and $w\in B(z,R)$. 

When $W$ is not complete, by $\partial W$ we mean the set $\overline{W}\setminus W$, with $\overline{W}$ the metric completion of $W$. 
We can extend
$\mu_W$ to $\partial W$ as a null measure.
We equip $\partial W$ with the \emph{codimension $\sigma$ Hausdorff measure} with respect to $\mu_W$, denoted by 
$\Ha_{\mu_W}^{-\sigma}$, with $\sigma>0$, as follows. For sets $A\subset\partial W$, we set
\begin{equation}\label{eq:def-pi}
\Ha_{\mu_W}^{-\sigma}(A)=\lim_{\eps\to 0^+}\Ha^{-\sigma}_{\mu_W,\eps}(A),
\end{equation}
where 
\[
\Ha^{-\sigma}_{\mu_W,\eps}(A):=\inf_{\{B_i\}_{i\in I\subset\N}} \sum_{i\in I}\frac{\mu_W(B_i)}{\rad(B_i)^\sigma}
\]
with the infimum over all covers $\{B_i\}_{i\in I\subset\N}$ of $A$ with balls $B_i\subset \overline{W}$ such that
$\rad(B_i)<\eps$ for each $i\in I$. We extend $\Ha_{\mu_W}^{-\sigma}$ to $W$ as a null measure. Thus we consider $\mu_W$ 
and $\Ha_{\mu_W}^{-\sigma}$ to
be Radon measures on $\overline{W}$, the first charging only subsets of $W$ and the second charging
only subsets of $\partial W$.

\begin{defn}\label{def:codim-regular}
Let $s>0$.
We say that a measure $\mu_0$ on $\overline W$ such that $\mu_0(W)=0$, 
is $s$-codimensional Ahlfors regular with respect to the measure $\mu_W$ if there is a constant $C\ge 1$ such that 
\begin{equation}\label{eq:codim-def}
C^{-1}\, \frac{\mu_W(B(\xi,r))}{r^s}\le \mu_0(B(\xi,r))\le C\, \frac{\mu_W(B(\xi,r))}{r^s}
\end{equation}
whenever $\xi\in\partial W$ and $0<r<2\, \diam(W)$.
\end{defn}

\begin{lem}
	Suppose that $\mu_W$ is doubling on $W$ (and hence on $\overline W$).
A measure $\mu_0$ on the space $\overline W$, with $\mu_0(W)=0$, is $\sigma$-codimensional with respect to $\mu_W$ if and only if 
$\Ha_{\mu_W}^{-\sigma}$ is $\sigma$-codimensional with respect to $\mu_W$ and $\mu_0\simeq \Ha_{\mu_W}^{-\sigma}$.
\end{lem}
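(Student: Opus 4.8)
The plan is to prove both directions by unwinding the definition of ``$\sigma$-codimensional'' (which, in the statement of the lemma just above, means the two-sided estimate \eqref{eq:codim-def} with $s=\sigma$) and exploiting the doubling hypothesis on $\mu_W$. First I would treat the backward implication, which is essentially immediate: if $\Ha_{\mu_W}^{-\sigma}$ satisfies \eqref{eq:codim-def} and $\mu_0\simeq\Ha_{\mu_W}^{-\sigma}$, then composing the comparison constants gives \eqref{eq:codim-def} for $\mu_0$ directly; the only thing to check is that $\mu_0\simeq\Ha_{\mu_W}^{-\sigma}$ as measures (i.e., a comparison of the measures of \emph{all} Borel sets, not merely of balls) is in fact equivalent to the ball-wise comparison, which follows from the standard outer-regularity/Vitali-covering argument once we know both measures are doubling on $\partial W$ — and doubling of a measure satisfying \eqref{eq:codim-def} is inherited from the doubling of $\mu_W$.

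For the forward implication, assume $\mu_0$ satisfies \eqref{eq:codim-def}. The crux is to show the Hausdorff-type measure $\Ha_{\mu_W}^{-\sigma}$ also satisfies \eqref{eq:codim-def}, with comparison constant depending only on $C$ in \eqref{eq:codim-def} and on the doubling constant $C_d$; the comparison $\mu_0\simeq\Ha_{\mu_W}^{-\sigma}$ will then drop out. For the upper bound $\Ha_{\mu_W}^{-\sigma}(B(\xi,r))\lesssim \mu_W(B(\xi,r))/r^\sigma$, I would use the single cover $\{B(\xi,r)\}$ in the definition \eqref{eq:def-pi} (after first passing to the $\eps$-restricted versions and using a Vitali $5r$-covering of $B(\xi,r)$ by balls of radius $<\eps$ centered in $\partial W$, summing $\mu_W(B_i)/\rad(B_i)^\sigma$ and absorbing the overlap and the dilation factor $5^\sigma$ into the doubling constant). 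For the lower bound $\Ha_{\mu_W}^{-\sigma}(B(\xi,r))\gtrsim \mu_W(B(\xi,r))/r^\sigma$, I would argue that for \emph{any} countable cover $\{B_i\}$ of $B(\xi,r)$ by balls of radius $<\eps$, subadditivity of $\mu_0$ together with the upper half of \eqref{eq:codim-def} applied to each $B_i$ gives
\[
\mu_0(B(\xi,r))\le \sum_i \mu_0(B_i)\le C\sum_i \frac{\mu_W(B_i)}{\rad(B_i)^\sigma},
\]
and taking the infimum over covers (and letting $\eps\to0$) yields $\mu_0(B(\xi,r))\le C\,\Ha_{\mu_W}^{-\sigma}(B(\xi,r))$; combined with the lower half of \eqref{eq:codim-def} this gives the desired lower bound. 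The two displays together show $\mu_0\simeq\Ha_{\mu_W}^{-\sigma}$ on balls, and again a Vitali/outer-regularity argument upgrades this to a comparison of the full measures.

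The main obstacle I anticipate is the \emph{upper} bound on $\Ha_{\mu_W}^{-\sigma}(B(\xi,r))$: one must produce an efficient cover of $B(\xi,r)$ by small balls centered on $\partial W$ whose weighted sum $\sum \mu_W(B_i)/\rad(B_i)^\sigma$ is controlled by $\mu_W(B(\xi,r))/r^\sigma$, and this requires a Vitali-type covering lemma for the (doubling) measure $\mu_W$ on $\overline W$ plus a careful use of \eqref{eq:LMB} to compare $\mu_W(B_i)/\rad(B_i)^\sigma$ across scales without blowing up constants. Everything else — subadditivity, the passage from ball comparisons to set comparisons via regularity, and inheritance of doubling — is routine measure theory once the covering step is in place. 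I would also remark that the case $\sigma$ not matching, i.e., ``$\mu_0$ is $\sigma$-codimensional but $\Ha_{\mu_W}^{-\sigma}$ is not even locally finite,'' is excluded precisely because \eqref{eq:codim-def} for $\mu_0$ forces the finite upper bound that the argument above then transfers to $\Ha_{\mu_W}^{-\sigma}$.
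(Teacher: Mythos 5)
Your treatment of the easy direction and of the lower bound for $\Ha_{\mu_W}^{-\sigma}$ is essentially the paper's argument, except for one small repair you still need: the inequality $\mu_0(B_i)\le C\,\mu_W(B_i)/\rad(B_i)^\sigma$ cannot be quoted directly for the covering balls $B_i$, since \eqref{eq:codim-def} is only assumed for balls \emph{centered on} $\partial W$. This is fixable exactly as in the paper: discard the $B_i$ missing $\partial W$ (they have $\mu_0(B_i)=0$), and for the others replace $B_i$ by a recentered ball $\widehat B_i$ with $B_i\cap\partial W\subset\widehat B_i\subset 3B_i$ and comparable radius, using the doubling of $\mu_W$ to compare $\mu_W(B_i)$ with $\mu_W(\widehat B_i)$.

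The genuine gap is in your upper bound $\Ha_{\mu_W}^{-\sigma}(B(\xi,r))\lesssim\mu_W(B(\xi,r))/r^\sigma$. You propose to obtain it by a $5r$-covering of $B(\xi,r)\cap\partial W$ at scale $\eps$ together with doubling and \eqref{eq:LMB}, with no use of $\mu_0$; but no such argument can work, because the estimate is simply false under the hypothesis ``$\mu_W$ doubling'' alone. Take $W=(0,1)^2$ with Lebesgue measure and $0<\sigma<1$: then $\Ha_{\mu_W}^{-\sigma}$ restricted to $\partial W$ is comparable to $(2-\sigma)$-dimensional Hausdorff measure, which is infinite on every boundary arc, while $\mu_W(B(\xi,r))/r^\sigma\simeq r^{2-\sigma}<\infty$. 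Quantitatively, for a fine cover at scale $\eps\ll r$ the sum $\sum_i\mu_W(B_i)/\rad(B_i)^\sigma$ is of order $\eps^{-\sigma}$ times the measure of an $\eps$-neighborhood of $\partial W\cap B(\xi,r)$, and doubling or \eqref{eq:LMB} give no upper decay of $\mu_W(B_i)$ in $\eps$ that could tame the factor $\eps^{-\sigma}$; the hypothesis on $\mu_0$ is precisely what supplies this decay. The paper's route is: by the lower half of \eqref{eq:codim-def} for $\mu_0$, each boundary-centered ball satisfies $\mu_W(\tfrac15 B_i)/\rad(\tfrac15 B_i)^\sigma\le C\,\mu_0(\tfrac15 B_i)$; summing over the pairwise disjoint balls $\tfrac15 B_i\subset B(\xi,2r)$ gives $\sum_i\mu_W(B_i)/\rad(B_i)^\sigma\lesssim\mu_0(B(\xi,2r))$, and then the upper half of \eqref{eq:codim-def} together with the (inherited) doubling of $\mu_0$ yields the bound by $\mu_W(B(\xi,r))/r^\sigma$. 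Your closing remark that the finiteness ``transfers'' from $\mu_0$ hints at this, but the mechanism must actually be inserted into the covering estimate; as written, the key step would fail.
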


\begin{proof}
If both $\Ha_{\mu_W}^{-\sigma}$ and $\mu_0$ are $\sigma$-codimensional with respect to $\mu_W$, then clearly $\mu_0\approx\Ha_{\mu_W}^{-\sigma}$; 
so it
suffices to show that if there is a measure $\mu_0$ on $\overline W$ that is $\sigma$-codimensional with respect to $\mu_W$,
then so is $\Ha_{\mu_W}^{-\sigma}$. To this end, we assume that $\mu_0$ is $\sigma$-codimensional with respect to $\mu_W$.
As $\mu_W$ is doubling, it follows that $\mu_0$ is doubling on $\partial W$.

Let $\xi\in\partial W$. Fix $\eps>0$ and
let $\{B_i\}_{i\in I\subset\N}$ be a cover of $B(\xi,r)$ in $\overline{W}$ with $\rad(B_i)<\eps$ for 
each $i\in I$. Let $I_1$ be the collection of all $i\in I$ for which $B_i\cap\partial W$ is non-empty, and for
each $i\in I_1$ we set $\widehat{B}_i$ to be a ball centered at some point in $\partial W$ and with radius at most
twice the radius of $B_i$, such that $B_i\cap\partial W\subset\widehat{B}_i\subset 3B_i$; then
\begin{align*}
\sum_{i\in I}\frac{\mu_W(B_i)}{\rad(B_i)^\sigma}
\ge C_d^{-2}\, \sum_{i\in I_1}\frac{\mu_W(\widehat{B}_i)}{\rad(\widehat{B}_i)^\sigma}
\ge (C_d^2C)^{-1}\, \sum_{i\in I} \mu_0(\widehat{B}_i)&\ge (C_d^2C)^{-1}\, \mu_0(B(\xi,r)\cap\partial W)\\
&=(C_d^2C)^{-1}\mu_0(B(\xi,r)).
\end{align*}
Taking the infimum over all such covers and then letting $\eps\to 0^+$, we see that
\[
\Ha_{\mu_W}^{-\sigma}(B(\xi,r))\ge C^{-1}\, \mu_0(B(\xi,r)).
\]
On the other hand, for $\eta>0$ there exists $\eps_\eta>0$ so that for each $0<\eps<\eps_\eta$ 
\[
\Ha_{\mu_W}^{-\sigma}(B(\xi,r))=\Ha_{\mu_W}^{-\sigma}(B(\xi,r)\cap\partial W)
\le \Ha^{-\sigma}_{\mu_W,\eps}(B(\xi,r)\cap\partial W)+\eta.
\]
We ensure that $\eps<r$ as well, and find a cover of $B(\xi,r)$ by balls $B_i$, $i\in I\subset\N$,
each centered at points in $B(\xi,r)\cap\partial W$ with 
radius $\eps$, so that the balls $\tfrac{1}{5}B_i$ are pairwise disjoint. This is always possible because of the doubling
property of $\overline{W}$, see for example~\cite{HKST}.
Note that $B_i\subset B(\xi,2r)$.
It follows that 
\[
\Ha_{\mu_W}^{-\sigma}(B(\xi,r))-\eta\le\, \sum_{i\in I}\frac{\mu_W(B_i)}{\rad(B_i)^\sigma}
\le C_d^3\, \sum_{i\in I} \frac{\mu_W(\tfrac15B_i)}{5^\sigma\rad(\tfrac15B_i)^\sigma}
\le C\, \frac{C_d^3}{5^\sigma}\, \sum_{i\in I}\mu_0(\tfrac15B_i)\le C\, \frac{C_d^3}{5^\sigma}\, \mu_0(B(\xi,2r)).
\]
Letting $\eta\to 0^+$ and then applying the doubling property of $\mu_0$ yields
\[
\Ha_{\mu_W}^{-\sigma}(B(\xi,r))\le C\, \frac{C_d^3}{5^\sigma} \mu_0(B(\xi,2r))\le C\, \frac{C_d^4}{5^\sigma} \mu_0(B(\xi,r)).\qedhere
\]
\end{proof}

\subsection{Potential theory}\label{sec:PotThry}

Let $(W,d_W,\mu_W)$ be a metric measure space, and let $1\le p<\infty.$  Given a family $\Gamma$ of non-constant, compact, rectifiable curves in 
$W$, we define the \emph{$p$-modulus} of $\Gamma$ by
\begin{equation*}
\Mod_p(\Gamma)=\inf_\rho\int_W\rho^pd\mu_W,
\end{equation*}
where the infimum is taken over all Borel functions $\rho:W\to[0,\infty]$ such that $\int_\gamma \rho\,ds\ge 1$ for all $\gamma\in\Gamma$.  Given a function $u:W\to\overline\R$, we say that a Borel function $g:W\to[0,\infty]$ is an \emph{upper gradient} of $u$ if the following holds for all non-constant, compact, rectifiable curves $\gamma:[a,b]\to W$:
\begin{equation*}
|u(\gamma(b))-u(\gamma(a))|\le\int_\gamma g\,ds,
\end{equation*} 
whenever $u(\gamma(a))$ and $u(\gamma(b))$ are both finite, and $\int_\gamma g\,ds=\infty$ otherwise.  We say that $g$ is a \emph{$p$-weak upper gradient of $u$} if the $p$-modulus of the family of curves where the above inequality fails is zero. The notion of
upper gradients first appeared in~\cite{HK}, and the notion of weak upper gradients first appeared in~\cite{HaK}; interested readers can find more
on these topics from~\cite{HKST}.

For $1\le p<\infty$, we define $\wtil N^{1,p}(W,\mu_W)$ to be the class of all functions in $L^p(W,\mu_W)$ which have an upper gradient belonging to $L^p(W,\mu_W)$.  Defining
\begin{equation*}
\|u\|_{\wtil N^{1,p}(W,\mu_W)}:=\|u\|_{L^p(W,\mu_W)}+\inf_g\|g\|_{L^p(W,\mu_W)},
\end{equation*}
where in the infimum is taken over all upper gradients $g$ of $u$, we then define an equivalence relation in $\wtil N^{1,p}(W,\mu_W)$ 
by $u\sim v$ if and only if $\|u-v\|_{\wtil N^{1,p}(W,\mu_W)}=0$.  The \emph{Newton-Sobolev space} $N^{1,p}(W,\mu_W)$ 
is then defined to be $\wtil N^{1,p}(W,\mu_W)/\sim$, equipped with the norm $\|\cdot\|_{N^{1,p}(W,\mu_W)}:=\|\cdot\|_{\wtil N^{1,p}(W,\mu_W)}$.  
Given an open set $\Omega\subset W$, one can similarly define $N^{1,p}(\Omega,\mu_W)$. 

When $u\in N^{1,p}(W,\mu_W)$, there is a distinguished $p$-weak upper gradient, denoted $g_u$, of $u$, such that
$g_u\in L^p(W,\mu_W)$ and $g_u\le g$ for every other $p$-weak upper gradient $g\in L^p(W,\mu_W)$ of $u$. This $g_u$ is a local object in the 
sense that if $u\in N^{1,p}(W,\mu_W)$ and $\Om$ is a nonempty open subset of $W$, then the minimal $p$-weak upper gradient of $u\vert_\Om$
is $g_u\vert_\Om$.
For more on Newton-Sobolev spaces and 
upper gradients, we refer the interested reader to~\cite{BB, HKST}. 

For $0<\theta<1$, $1\le p<\infty$, and a function $u\in L^1_\loc(W,\mu_W)$, we define
\begin{equation*}
\|u\|_{B^\theta_{p,p}(W,\mu_W)}^p:=\int_W\int_W\frac{|u(y)-u(x)|^p}{d(x,y)^{\theta p}\mu_W(B(x,d(x,y)))}d\mu_W(y)d\mu_W(x).
\end{equation*}
We then define the \emph{Besov space} $B^\theta_{p,p}(W,\mu_W)$ as the set of all functions in $L^p(W,\mu_W)$ for which the above seminorm is finite.
The theory of Besov spaces has a large volume of literature, see for instance~\cite{Gagliardo, Hitchhiker, JW}
and the references therein for the Euclidean setting, and~\cite{GriHuLau, BouPaj, TriYan, Kaimanovich, GoKoSh}
for the setting of metric measure spaces equipped with a doubling measure. 
Since the extant literature on this topic is vast, we cannot hope to list them all here, and the references
we give here are merely a sampling.

We can now define the following capacities with respect to the Newton-Sobolev and Besov spaces.  For $E\subset W$, by $\BCap_p^W(E)$ we mean the number 
\[
\BCap_p^W(E):=\inf_u\left(\int_W|u|^p\, d\mu_W+\inf_g\int_Wg^p\, d\mu_W\right),
\]
where the first infimum is over all functions $u\in N^{1,p}(W,\mu_W)$
that satisfy $u\ge 1$ on $E$, and the second infimum is over all upper gradients $g$ of $u$.  By $\BCap^W_{\theta,p}(E)$ we mean the number
\[
\BCap_{\theta,p}^W(E):=\inf_u\left(\int_W|u|^p\, d\mu_W+\|u\|_{B^\theta_{p,p}(W,\mu_W)}^p\right),
\]
where the infimum is over all functions $u\in B^\theta_{p,p}(W,\mu_W)$ that satisfy $u\ge 1$ on a
neighborhood of $E$ in $W$.   We will also use the following variational (Newton-Sobolev) capacity: given an open set $\Omega\subset W$ and a set $E\subset\Omega$,  by $\vcap^W_p(E,\Om)$ we mean the number 
\[
\vcap_p^W(E,\Omega):=\inf_u\int_\Omega g_u^p\,d\mu_W,
\]
where the infimum is taken over all $u\in N^{1,p}(W,\mu_W)$ such that $u\ge 1$ on $E$ and $u=0$ in $W\setminus\Omega$.  In this paper, we will often consider Newton-Sobolev and Besov spaces as defined with respect to different metric 
measure spaces.  For this reason, we have kept the dependence on $W$ in the above notation.  

We say that a property holds $\BCap_p^W$-q.e.~if the set of all points for which the property fails is a $\BCap_p^W$-capacitary null set.
Similarly, we say that a property holds $\BCap_{\theta,p}^W$-q.e.~if the set of all points for which the property fails is a $\BCap_{\theta,p}^W$-capacitary null set.

Let $1\le q, p<\infty$.  Following~\cite{HaK}, we say that 
$(W,d_W,\mu_W)$ supports a \emph{$(q,p)$-Poincar\'e inequality} if there exist constants $C\ge 1$ and $\lambda\ge 1$ such that the following holds for balls $B\subset W$ and all function-upper gradient pairs $(u,g)$:
\begin{equation*}
\left(\fint_B|u-u_B|^qd\mu_W\right)^{1/q}\le C\rad(B)\left(\fint_{\lambda B}g^p\,d\mu_W\right)^{1/p}.
\end{equation*}
Here and throughout this paper, we use the notation
\[
u_B=\fint_B u\,d\mu_W=\frac{1}{\mu_W(B)}\int_Bu\,d\mu_W.
\]
When $(W,d_W,\mu_W)$ is a geodesic space which supports a $(q,p)$-Poincar\'e inequality, then we can take $\lambda=1$,
as shown in~\cite[Corollary~9.8]{HaK}. 
For more on Poincar\'e inequalities we refer the interested reader to~\cite{HKST}.

It was also shown in~\cite[Theorem~5.1]{HaK} that for doubling metric measure spaces,
the validity of $(1,p)$-Poincar\'e inequality implies the validity of $(p,p)$-Poincar\'e inequality. Putting this together
with~\cite[Theorem~5.53]{BB}, we obtain
the following Maz'ya-type inequality, see for example \cite[Theorem~5.53]{BB}:

\begin{thm}\label{thm:Mazya}
Assume that $(W,d_W,\mu_W)$ is a doubling geodesic space and that it
supports a $(1,p)$-Poincar\'e inequality for some $1\le p<\infty$.  For $u\in N^{1,p}_\loc(W,\mu_W)$, let $S:=\{x\in W: u(x)=0\}$.  
Then for all balls $B\subset W$, 
\begin{equation*}
\fint_B|u|^pd\mu_W\le\frac{C(\rad(B)^p+1)}{\BCap^W_p(B\cap S)}\int_{2B}g_u^pd\mu_W.
\end{equation*}
Here $g_u$ is the minimal $p$-weak upper gradient of $u$. 
\end{thm}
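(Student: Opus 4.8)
The plan is to write
\[
\fint_B|u|^p\,d\mu_W\le 2^{p-1}\Big(\fint_B|u-u_B|^p\,d\mu_W+|u_B|^p\Big)
\]
and estimate the two terms separately. For the oscillation term, since $W$ is doubling, geodesic, and supports a $(1,p)$-Poincar\'e inequality, the self-improvement $(1,p)\Rightarrow(p,p)$ together with the reduction of the dilation constant to $1$ on geodesic spaces (both recalled above) yields $\fint_B|u-u_B|^p\,d\mu_W\le C\rad(B)^p\fint_{2B}g_u^p\,d\mu_W$, and likewise with $B$ replaced by $\tfrac32 B$. Thus the real content is the estimate $|u_B|^p\le C(\rad(B)^p+1)\,\BCap^W_p(B\cap S)^{-1}\int_{2B}g_u^p\,d\mu_W$, which I would prove by exhibiting an explicit admissible competitor for $\BCap^W_p(B\cap S)$.

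We may assume $u_B\ge 0$; if $u_B=0$ there is nothing to prove, so suppose $u_B>0$ and put $F:=B\cap S$. Let $\psi$ be a Lipschitz cutoff with $0\le\psi\le1$, $\psi\equiv1$ on $B$, $\psi\equiv0$ off $\tfrac32 B$, and $g_\psi\lesssim\rad(B)^{-1}\chi_{\frac32 B}$, and set $v:=\big(1-|u|/u_B\big)_+\,\psi$. Since $u=0$ on $F$ and $F\subset B$, we have $v\equiv1$ on $F$, so $v$ is admissible for $\BCap^W_p(F)$ once we know $v\in N^{1,p}(W,\mu_W)$, which follows from the bounds below. The decisive elementary fact is the pointwise inequality $\big(1-|u|/u_B\big)_+\le|u-u_B|/u_B$, a consequence of $\big||u|-u_B\big|\le|u-u_B|$; it controls $v$ \emph{and not merely its gradient} by the oscillation of $u$, so that with the Poincar\'e inequality
\[
\|v\|_{L^p(W,\mu_W)}^p\le\frac{1}{u_B^p}\int_{\frac32 B}|u-u_B|^p\,d\mu_W\le\frac{C\rad(B)^p}{u_B^p}\int_{2B}g_u^p\,d\mu_W .
\]
By the Leibniz and chain rules $g_v\le\psi\,g_u/u_B+\big(1-|u|/u_B\big)_+\,g_\psi$; the first term integrates to at most $u_B^{-p}\int_{2B}g_u^p\,d\mu_W$, and the second, using the pointwise inequality once more together with the bound on $g_\psi$ and the Poincar\'e inequality, to at most $C\,u_B^{-p}\int_{2B}g_u^p\,d\mu_W$. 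Adding, $\BCap^W_p(F)\le\|v\|_{L^p}^p+\int_Wg_v^p\,d\mu_W\le C(\rad(B)^p+1)\,u_B^{-p}\int_{2B}g_u^p\,d\mu_W$, which rearranges to the claimed bound for $|u_B|^p$; when $\BCap^W_p(F)=0$ the right-hand side of the theorem is $+\infty$, so nothing is asserted.

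Finally I would put the two terms on a common footing. To convert $\rad(B)^p\fint_{2B}g_u^p\,d\mu_W=\rad(B)^p\mu_W(2B)^{-1}\int_{2B}g_u^p\,d\mu_W$ into the form appearing in the statement, I use the crude estimate $\BCap^W_p(F)\le\BCap^W_p(B)\le C\mu_W(2B)\big(1+\rad(B)^{-p}\big)$ — obtained by testing $\BCap^W_p(B)$ with the very cutoff $\psi$ above — to get $\mu_W(2B)^{-1}\le C\big(1+\rad(B)^{-p}\big)\BCap^W_p(F)^{-1}$, and hence $\rad(B)^p\fint_{2B}g_u^p\,d\mu_W\le C(\rad(B)^p+1)\,\BCap^W_p(F)^{-1}\int_{2B}g_u^p\,d\mu_W$. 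Combining this with the mean-term bound yields the theorem. The main obstacle is the second paragraph: multiplying the natural competitor $\big(1-|u|/u_B\big)_+$ by a cutoff produces an $L^p$-term of size $\sim\mu_W(2B)$ and, from $g_\psi$, a gradient term of size $\sim\mu_W(2B)\rad(B)^{-p}$, and neither can be absorbed into $\BCap^W_p(F)$, which may be arbitrarily small; the pointwise domination of $\big(1-|u|/u_B\big)_+$ by $|u-u_B|/u_B$ is precisely what recasts all of these quantities as multiples of $\int_{2B}g_u^p\,d\mu_W$. A minor point is to keep every application of the Poincar\'e inequality inside $2B$, which is why $\psi$ is supported in $\tfrac32 B$ and why we invoke the geodesic reduction of the Poincar\'e dilation constant.
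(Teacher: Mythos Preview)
The paper does not give its own proof of this theorem; it is stated as a consequence of \cite[Theorem~5.53]{BB} together with the self-improvement $(1,p)\Rightarrow(p,p)$ from \cite[Theorem~5.1]{HaK}. Your argument is correct and is essentially the classical Maz'ya construction that underlies the cited result: split $\fint_B|u|^p$ into the oscillation and the mean, control the mean by testing $\BCap_p^W(B\cap S)$ with $v=(1-|u|/u_B)_+\psi$, and use the key pointwise bound $(1-|u|/u_B)_+\le|u-u_B|/u_B$ so that both $\|v\|_{L^p}^p$ and the cutoff contribution to $\int g_v^p$ collapse to multiples of $\int_{2B}g_u^p$ via Poincar\'e. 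The only cosmetic point is that when you write $\int_{\frac32 B}|u-u_B|^p\le C\rad(B)^p\int_{2B}g_u^p$ you are silently replacing $u_{\frac32 B}$ by $u_B$, which costs a harmless doubling-dependent constant; otherwise the argument is complete and matches the standard proof.
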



\subsection{Uniform domains}\label{uniformity}

The notion of uniform domains first arose in the study of quasiconformal mapping theory, as see for example~\cite{GerOsg, MarSar, GerMar, BHK},
but since then has also played crucial roles in Euclidean potential theory
see for example~\cite{Jones, Aik1, Aik2, KRZ} and 
potential theory in metric measure spaces~\cite{Maly,BS, BBS}.
It is also worth noting that in a geodesic metric space, every domain can be approximated 
by uniform domains, as proved in~\cite{TRaj}.

\begin{defn}\label{def:uniform}
A noncomplete, locally compact metric space $(W,d_W)$ is said to be a uniform domain if there is a constant $A>1$ such that, with
$\partial W=\overline W\setminus W$, for each pair of points $x,y\in W$ we can find a curve $\gamma$ in $W$ with end points $x,y$
such that the length $\ell(\gamma)\le A\, d_W(x,y)$ and in addition, for each point $z$ in the trajectory of $\gamma$, we have
\[
d(z,\partial W):=\dist(z,\partial W)\ge A^{-1}\, \min\{\ell(\gamma_{x,z}), \ell(\gamma_{z,w})\},
\]
where $\gamma_{x,z}$ and $\gamma_{z,w}$ denote each subcurve of $\gamma$ with end points $x,z$ and $z,w$ respectively. 
Curves $\gamma$ satisfying the above condition are called uniform curves.
\end{defn}

The potential-theoretic utility of uniform domains comes from the ability to connect pairs of points in the domain by chains of Whitney-type
balls.

\begin{lem}\cite[Section~3.1]{GS}\label{lem:Chain}
	Let $\Omega\subset W$ be a uniform domain.  Then for each $w,z\in\partial\Omega$, there exists a uniform curve $\gamma$ joining $w$ and $z$ and a chain of balls $\{B_k:=B(x_k,r_k)\}_{k\in\Z}$ covering $\gamma$ such that
	\begin{enumerate}
		\item $\lim_{k\to+\infty} x_k=w$ and $\lim_{k\to-\infty}x_k=z$.
		\item If $k\ge 0$ and $x\in B_k$, then $r_k\simeq d(x,\partial\Omega)\simeq d(x,w)\lesssim 2^{-|k|/(4C)} d(w,z)$.
		\item If $k<0$ and $x\in B_k$, then $r_k\simeq d(x,\partial\Omega)\simeq d(x,z)\lesssim 2^{-|k|/(4C)} d(w,z)$.
		\item The collection $\{4B_k\}_{k\in\Z}$ has bounded overlap.
	\end{enumerate}
\end{lem}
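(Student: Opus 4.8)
The plan is to prove Lemma~\ref{lem:Chain} in two stages: first produce a single uniform curve $\gamma$ joining the two boundary points $w,z\in\partial\Omega$, and then thread a Whitney-type chain of balls along $\gamma$ and verify the stated properties.

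\emph{Stage 1: the boundary curve.} Definition~\ref{def:uniform} only furnishes uniform curves between \emph{interior} points, so I would first pick sequences $x_n\to w$ and $y_n\to z$ with $x_n,y_n\in\Omega$, and let $\gamma_n$ be a uniform curve in $\Omega$ from $x_n$ to $y_n$. For $n$ large these have length $\ell(\gamma_n)\le A\,d(x_n,y_n)\le 2A\,d(w,z)$, so after reparametrizing on $[0,1]$ proportionally to arc length they form an equi-Lipschitz family taking values in a fixed closed bounded subset of $\overline\Omega$ (which is compact in context). An Arzel\`a--Ascoli argument then yields a subsequential uniform limit $\gamma\colon[0,1]\to\overline\Omega$ with $\gamma(0)=w$, $\gamma(1)=z$; lower semicontinuity of length gives $\ell(\gamma)\le A\,d(w,z)$, and passing to the limit in the cigar inequality (using continuity of $x\mapsto d(x,\partial\Omega)$ and lower semicontinuity of subcurve lengths) shows $\gamma$ is a uniform curve with interior in $\Omega$. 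The key consequence I will use is that if $x$ lies on the subarc from the arc-length midpoint of $\gamma$ to $w$, then the $w$-subcurve is the shorter one, so $A^{-1}\ell(\gamma_{x,w})\le d(x,\partial\Omega)\le d(x,w)\le\ell(\gamma_{x,w})$, whence $d(x,\partial\Omega)\simeq d(x,w)\simeq\ell(\gamma_{x,w})$, and symmetrically $d(x,\partial\Omega)\simeq d(x,z)\simeq\ell(\gamma_{x,z})$ on the $z$-half.

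\emph{Stage 2: the chain and the decay estimate.} Parametrize $\gamma$ by arc length, let $x_0$ be its midpoint (so $d(x_0,\partial\Omega)\simeq\ell(\gamma)\simeq d(w,z)$), and set $r_k:=c_0\,d(x_k,\partial\Omega)$ for a small structural constant $c_0\in(0,1/2)$. Define $x_{k+1}$ for $k\ge0$ to be the point of $\gamma$ on the $w$-side of $x_k$ at arc-length $c_0 r_k$ from $x_k$, and $x_k$ for $k<0$ symmetrically toward $z$; for $c_0$ small (depending on $A$) the lower bound $\ell(\gamma_{x_k,w})\gtrsim r_k$ from Stage 1 keeps the recursion from ever reaching $w$, so it runs indefinitely. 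The subarc between consecutive centers has length $\le c_0 r_k\le r_k$, hence lies in $B_k$, so $\{B_k\}$ covers $\gamma$; and $r_k\le\tfrac12 d(x_k,\partial\Omega)$ gives $d(x,\partial\Omega)\simeq r_k$ for all $x\in B_k$, which combined with $d(x_k,w)\simeq d(x_k,\partial\Omega)$ on the $w$-half yields $d(x,w)\simeq r_k$ for $x\in B_k$, $k\ge0$ (and the analogue with $z$ for $k<0$). For the decay: on the $w$-half $\ell(\gamma_{x_k,w})\simeq r_k$, and $\ell(\gamma_{x_{k+1},w})=\ell(\gamma_{x_k,w})-c_0 r_k\le(1-c_1)\ell(\gamma_{x_k,w})$ for a structural $c_1\in(0,1)$; iterating from $\ell(\gamma_{x_0,w})\simeq d(w,z)$ gives $r_k\simeq\ell(\gamma_{x_k,w})\lesssim(1-c_1)^k d(w,z)$, which after enlarging the structural constant $C$ is exactly $r_k\lesssim 2^{-|k|/(4C)}d(w,z)$, and likewise on the $z$-side. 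In particular $r_k\to0$, $x_k\to w$ as $k\to+\infty$ and $x_k\to z$ as $k\to-\infty$, giving (1)--(3).

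For (4), if $4B_j\cap 4B_k\neq\varnothing$ with, say, $j,k\ge0$ and $r_j\le r_k$, then $d(x_j,x_k)\le 8r_k=8c_0\,d(x_k,\partial\Omega)$, so $d(x_j,\partial\Omega)\ge(1-8c_0)d(x_k,\partial\Omega)$; taking $c_0$ small forces $r_j\simeq r_k$, and by the geometric decay of $(r_k)$ only boundedly many indices $j$ can satisfy this, while the mixed-sign case involves only the finitely many indices near $0$; hence $\{4B_k\}_{k\in\Z}$ has bounded overlap. The genuine obstacle here is Stage 1 — carefully extracting the limit curve and checking that the cigar condition survives the limit, so that $\gamma$ is a bona fide uniform curve between two \emph{boundary} points — together with the bookkeeping needed to pin down the exponent $1/(4C)$; Stage 2 is then a routine Whitney-decomposition argument. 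This is essentially the content of \cite[Section~3.1]{GS}.
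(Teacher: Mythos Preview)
The paper does not actually prove this lemma; it is stated with a citation to \cite[Section~3.1]{GS} and then used as a black box in the proofs of Proposition~\ref{prop:1} and Lemma~\ref{lem:traceZ}. So there is no ``paper's own proof'' to compare against, only the external reference.

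That said, your argument is correct and is precisely the standard construction one finds in \cite{GS} and related sources: pass to a limit to get a uniform curve between boundary points, then thread Whitney-type balls along it with radii proportional to $d(x_k,\partial\Omega)$ and step size a fixed fraction of the radius. Two small points worth tightening. First, the Arzel\`a--Ascoli step needs the target to be compact; you flag this as ``compact in context,'' and indeed in the paper's applications $\overline\Omega=\overline X_\eps$ is compact, but in a fully general uniform domain one should either assume properness of the ambient space or cite a version of the argument that works locally. Second, your bounded-overlap argument in (4) implicitly uses two-sided geometric control on $r_k$ (not just the upper decay $r_k\lesssim(1-c_1)^k d(w,z)$ but also $r_{k+1}\gtrsim r_k$), which you have from the recursion $\ell(\gamma_{x_{k+1},w})\ge(1-c_0^2)\ell(\gamma_{x_k,w})$ but do not state explicitly; it would help to record that $r_{k+1}\simeq r_k$ with uniform constants before invoking the counting argument.
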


\subsection{Compact doubling metric measure spaces as boundaries of uniform domains of globally controlled geometry}\label{sec:HypFill}

From~\cite{BBS} we know that every compact doubling metric measure space $(Z,d,\nu)$ (up to a biLipschitz change in the metric) is the boundary
of a bounded uniform domain. We denote this uniform domain $X_\eps$, as it is obtained as the uniformization of a hyperbolic filling graph of
$Z$ with parameter $\eps>0$. In~\cite{BBS}, a one parameter family of measures $\mu_\beta$, $\beta>0$, was also constructed on $X_\eps$,
based on the measure $\nu$ on $Z$, such that the measure $\nu$ is $\beta/\eps$--codimensional with respect to $\mu_\beta$.

It was also shown in~\cite{BBS} that the measure $\mu_\beta$ is doubling on the metric space $(X_\eps,\, d_\eps)$ and that
the metric measure space $(X_\eps,d_\eps,\mu_\beta)$ supports a $(1,1)$-Poincar\'e inequality, which, thanks to the H\"older inequality,
is the strongest of all the $(1,p)$-Poincar\'e inequalities.

The notion of hyperbolic filling was proposed by Gromov~\cite{Gro}, and developed further in~\cite{Car, BouPaj, BonKle,  BuySch, BonSak, BonSakSot}.
In this subsection we describe a construction of the so-called hyperbolic filling of a compact doubling metric measure space $(Z,d,\nu)$ as
given in~\cite{BBS}.

Given a compact doubling metric measure space $(Z,d,\nu)$, 
by rescaling the metric if necessary (that is, replacing $d$ with $(2\, \diam(Z))^{-1} d$), in constructing the hyperbolic filling we may assume
that $\diam(Z)<1$. 
We fix a point $z_0\in Z$ and set $A_0=\{z_0\}$. We also fix $\alpha>2$
and $\tau>2$, and for each positive integer $n$ we choose a maximal $\alpha^{-n}$-separated subset $A_n$ of $Z$ such that $A_{n-1}\subset A_n$.
We then set 
\[
V=\bigcup_{n=0}^\infty A_n\times \{n\},
\]
and the set $V$ forms the vertex set for the graph $G$ we construct next. For $(\xi,n), (\zeta,m)\in V$, we say that $(\xi,n)$ is a 
neighbor of $(\zeta,m)$,
if $|n-m|\le 1$ and in addition, either $n=m$ and $B_Z(\xi,\alpha^{-n})$ intersects $B_Z(\zeta,\alpha^{-n})$, or else $n=m\pm 1$ and
$B_Z(\xi,\tau\, \alpha^{-n})$ intersects $B_Z(\zeta, \tau\, \alpha^{-m})$. 
We turn $G$ into a metric graph $X$ by gluing unit-length intervals $[v,w]$ between
each pair of neighboring vertices $v, w$. Let $d_X$ denote the resulting graph metric on $X$.

The construction of $X$, as given here, is from~\cite{BBS}, and is a simple modification of that given 
in~\cite{BouPaj, BuySch, BonSak, BonSakSot}, and it is known that $X$ is a Gromov hyperbolic space with
visual boundary that is quasisymmetric to the metric space $Z$. Motivated 
by~\cite{BHK}, we use the tool of uniformization of the metric on $X$ as follows.
Let $\eps=\log\alpha$. With $v_0=(z_0,0)$ playing the role of a root vertex, when $x,y\in X$ we set
\[
d_\eps(x,y)=\inf_\gamma\int_\gamma e^{-\eps\, d_X(\gamma(t),v_0)}\, ds(t),
\]
where the infimum is over all paths $\gamma$ in $X$ with end points $x,y$. We also lift up the measure $\nu$ on $Z$ to $X_\eps$ as follows.
For each edge $[v,w]$ with $v=(\xi_v,n_v)$ and $(\xi_w,n_w)=w\in V$, we set 
\[
a_{(v,w)}=\frac{\nu(B_Z(\xi_v,\alpha^{-n_v}))+\nu(B_Z(\xi_w,\alpha^{-n_w}))}{2},
\] 
and then for $A\subset X_\eps$ we set
\[
\mu_\beta(A):=\int_A \left(\sum_{(v,w)}a_{(v,w)}\chi_{[v,w]}(x)\, e^{-\beta d_X(x,v_0)}\right) d\mathcal{H}^{1}(x).
\]

The results from~\cite{BBS} related to the construction $(X_\eps,d_\eps,\mu_\beta)$ are summarized below.

    \begin{thm}\cite[Theorem~1.1]{BBS}\label{thm:HypFillThm}
    Let $(Z,d,\nu)$ be a compact, doubling metric measure space, let $1\le p<\infty$, and let $\alpha,\,\tau>1$ be the parameters from the above construction.  Let $\eps=\log\alpha.$ Then for each $\beta>0$, the uniformized hyperbolic filling $(\overline X_\eps,d_\eps,\mu_\beta)$ constructed above satisfies the following:
    \begin{enumerate}
        \item $(Z,d)$ is biLipschitz equivalent to $(Z,d_\eps)$, with the biLipschitz constant depending on $\eps$ and $\diam(Z)$.
        \item Both $(X_\eps,d_\eps,\mu_\beta)$ and $(\overline X_\eps,d_\eps,\mu_\beta)$ are doubling and support a $(1,1)$-Poincar\'e inequality;
        moreover, we have $N^{1,p}(X_\eps,\mu_\beta)=N^{1,p}(\overline X_\eps,\mu_\beta)$.
        \item For all $z\in Z$ and $0<r\le2\diam(Z)$, 
        \[
        \nu(B_\eps(z,r)\cap Z)\simeq\frac{\mu_\beta(B_\eps(z,r))}{r^{\beta/\eps}},
        \]
        that is, $\nu$ is $\beta/\eps$-codimensional with respect to the measure $\mu_\beta$.
        \item When $\beta/\eps<p$, there exist bounded linear trace and extension operators
        \[
        T_X:N^{1,p}(\overline X_\eps,\mu_\beta)\to B^{1-\beta/(\eps p)}_{p,p}(\overline Z,\nu)\quad\text{and}\quad E_X:B^{1-\beta/(\eps p)}_{p,p}(\overline Z,\nu)\to N^{1,p}(\overline X_\eps,\mu_\beta)
        \]
        such that $T_X\circ E_X=\Id$. The boundedness of $T_X$ and $E_X$ are with respect to the full norms of the respective spaces
        (which includes the $L^p$-norms), and in addition, we also get boundedness in energy as well.
        Furthermore, if $u\in B^{1-\beta/(\eps p)}_{p,p}(\overline Z,\nu)$, then $\nu$-a.e.\ $z\in \overline Z$ is 
        a Lebesgue point of $E_Xu$. 
    \end{enumerate}
    In all of the above, the constants depend only on  $\alpha$, $\tau$, $\beta$,  $p$, and the doubling constant of $\nu$.   
\end{thm}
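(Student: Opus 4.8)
Since Theorem~\ref{thm:HypFillThm} is the main result of~\cite{BBS}, I only sketch the strategy one would follow, organized around the four claims. First I would check that the metric graph $X$ built from the nested maximal $\alpha^{-n}$-separated nets $A_n$ is a geodesic Gromov hyperbolic space of bounded vertex degree whose Gromov boundary, carrying a visual metric of parameter $\eps$, is biLipschitz to $(Z,d)$ once $\eps=\log\alpha$ is matched to the net scale $\alpha^{-n}$; this is essentially the content of~\cite{BonSak, BuySch}, building on~\cite{Gro, BonKle}, and the hyperbolicity constant depends only on $\alpha$ and $\tau$. Applying the uniformization procedure of Bonk--Heinonen--Koskela~\cite{BHK} with this $\eps$ produces the bounded uniform domain $(X_\eps,d_\eps)$ whose boundary $\overline X_\eps\setminus X_\eps$ is biLipschitz to that visual boundary; composing the two biLipschitz maps gives Claim~(1), with uniformity constant depending only on the hyperbolicity constant, hence only on $\alpha,\tau$.

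The analysis of $\mu_\beta$ rests on the dictionary between graph depth and uniformized geometry: a point $x$ on an edge at level $n$ has $d_\eps(x,\overline X_\eps\setminus X_\eps)\simeq e^{-\eps n}=\alpha^{-n}$, and the density defining $\mu_\beta$ contributes mass $\simeq\alpha^{-\beta n/\eps}\,\nu(B_Z(\xi,\alpha^{-n}))$ near the vertex $(\xi,n)$. For Claim~(3) I would fix $z\in Z$ and $0<r\le 2\diam(Z)$ and sum the edge contributions over the levels $n$ with $\alpha^{-n}\lesssim r$; the bounded degree of the graph together with the doubling of $\nu$ turns this into a geometric series in $n$ whose leading term is $\simeq r^{\beta/\eps}\nu(B_Z(z,r))$, using $\beta>0$, which is the codimension relation. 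Doubling of $\mu_\beta$ on $\overline X_\eps$ (first part of Claim~(2)) follows by the same level-by-level comparison, treating separately the balls that remain in the interior, where the density is comparable to a constant on the ball, and those reaching the boundary, where Claim~(3) and the doubling of $\nu$ apply. The equality $N^{1,p}(X_\eps,\mu_\beta)=N^{1,p}(\overline X_\eps,\mu_\beta)$ then holds because, by Claim~(3), $\overline X_\eps\setminus X_\eps$ is $\mu_\beta$-null and has zero $p$-capacity.

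I expect the $(1,1)$-Poincar\'e inequality to be the main obstacle, and I would prove it by the chaining argument standard for uniformized Gromov hyperbolic spaces. Given a ball $B=B_\eps(x,r)$ and a function--upper gradient pair $(u,g)$, connect $\mu_\beta$-almost every pair of points of $B$ by a uniform curve and cover it by a chain of Whitney-type balls $\{B_k\}$ as in Lemma~\ref{lem:Chain}; telescoping $u$ along the successive averages $u_{B_k}$ and estimating each increment by $\rad(B_k)\fint_{4B_k}g\,d\mu_\beta$ produces a pointwise estimate $|u(y)-u(y')|\lesssim r\,(Mg)(y)+r\,(Mg)(y')$, where $M$ is a restricted maximal function over a fixed dilate of $B$. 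The exponential weight $e^{-\beta d_X(\cdot,v_0)}$ in $\mu_\beta$ does not interfere, since along a uniform curve it varies in a controlled geometric fashion, so the weighted averages over consecutive Whitney balls stay comparable exactly as in the unweighted case; the delicate point is verifying that the Whitney decomposition of $X_\eps$ is compatible with the graph levels, with constants independent of $n$. This pointwise bound upgrades to a $(1,p)$-, hence by~\cite[Theorem~5.1]{HaK} a $(1,1)$-, Poincar\'e inequality via the usual truncation argument, and the geodesic character of $(X_\eps,d_\eps)$ permits $\lambda=1$ by~\cite[Corollary~9.8]{HaK}.

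For Claim~(4), assume $\beta/\eps<p$ and set $\theta=1-\beta/(\eps p)$. Given $f\in B^\theta_{p,p}(\overline Z,\nu)$, define $E_Xf$ on the vertices by $E_Xf(v)=\fint_{B_Z(\xi_v,\alpha^{-n_v})}f\,d\nu$ and interpolate linearly along edges; the minimal $p$-weak upper gradient on an edge $[v,w]$ is $\simeq|E_Xf(v)-E_Xf(w)|$, so $\int_{X_\eps}g_{E_Xf}^p\,d\mu_\beta$ decomposes level-by-level into sums $\simeq\alpha^{-\beta n/\eps}\sum|f_{B_Z(\xi,\alpha^{-n})}-f_{B_Z(\zeta,\alpha^{-n})}|^p\,\nu(B_Z(\xi,\alpha^{-n}))$ over neighboring pairs $\xi,\zeta\in A_n$, and one recognizes this double sum over scales as a discretization of $\|f\|_{B^\theta_{p,p}(\overline Z,\nu)}^p$; controlling the $L^p$-part similarly gives boundedness of $E_X$ in the full norm and in energy. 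For the trace, put $T_Xu(z)=\lim_{r\to 0^+}\fint_{B_\eps(z,r)}u\,d\mu_\beta$; the Poincar\'e inequality together with Claim~(3) shows, exactly as in the codimension-$\sigma$ trace machinery underlying Theorem~\ref{thm:TraceTheorem}, that this limit exists for $\nu$-a.e.\ $z$, that $T_X$ is bounded from $N^{1,p}(\overline X_\eps,\mu_\beta)$ to $B^\theta_{p,p}(\overline Z,\nu)$, and that $\nu$-a.e.\ point of $\overline Z$ is a Lebesgue point of any extension. Finally $T_X\circ E_X=\Id$ is immediate from the Lebesgue differentiation theorem on $(Z,\nu)$, since $E_Xf$ restricted to level $n$ is exactly the average of $f$ over the ball $B_Z(\cdot,\alpha^{-n})$.
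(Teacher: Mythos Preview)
The paper does not prove Theorem~\ref{thm:HypFillThm}; it is quoted verbatim from~\cite{BBS} as background, with only the explicit formulas~\eqref{eq:Tx-def} and~\eqref{eq:Ex-def} for $T_X$ and $E_X$ recorded afterward. Your sketch correctly recognizes this and gives a faithful outline of the architecture of the argument in~\cite{BBS}: Gromov hyperbolicity of the filling graph and identification of its visual boundary with $Z$, uniformization via~\cite{BHK} for Claim~(1), the level-by-level geometric-series computation for the codimension relation and doubling in Claims~(2)--(3), and the discretized Besov-energy comparison for $E_X$ and the Lebesgue-point trace for $T_X$ in Claim~(4).

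One slip worth flagging: you write that the pointwise chain estimate ``upgrades to a $(1,p)$-, hence by~\cite[Theorem~5.1]{HaK} a $(1,1)$-, Poincar\'e inequality.'' That implication runs the wrong way: \cite[Theorem~5.1]{HaK} is self-improvement from $(1,p)$ to $(q,p)$ with $q>p$, not passage from $(1,p)$ down to $(1,1)$, and in general $(1,p)$ for $p>1$ does not imply $(1,1)$. The truncation argument you allude to actually produces the $(1,1)$ inequality \emph{directly} from the pointwise maximal bound (via the weak-$(1,1)$ maximal inequality), without any detour through $(1,p)$; alternatively, in~\cite{BBS} the $(1,1)$ inequality is obtained by exploiting that on each edge $\mu_\beta$ is a weighted Lebesgue measure and the graph has uniformly bounded degree, so the telescoping sum along a uniform curve is already an $L^1$ estimate. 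This is a minor misattribution in an otherwise accurate summary.
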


Indeed, an explicit construction is given as follows. For $w\in N^{1,p}(\overline{X}_\eps,\mu_\beta)$ and $\xi\in Z$, we have
\begin{equation}\label{eq:Tx-def}
T_Xw(\xi)=\lim_{r\to 0^+}\jint_{B(\xi,r)}w\, d\mu_\beta,
\end{equation}
while, for $u\in B^{1-\beta/(\eps p)}_{p,p}(Z,\nu)$ and vertices $v\in X$, we have
\begin{equation}\label{eq:Ex-def}
E_Xu(v)=\jint_{D(v)}u\, d\nu,
\end{equation}
and then extend $E_Xu$ linearly to edges in $X$. Here $D(v)=B_Z(\xi_v,\alpha^{-n_v})$ when $v=(\xi_v,n_v)$.  

As a consequence of Theorem~\ref{thm:HypFillThm}, it was also shown in \cite[Proposition~13.2]{BBS} that  
when $\beta/\eps<p$ and $\theta=1-\beta/(\eps p)$, for $E\subset\overline Z$
we have that 
\begin{equation}\label{eq:CapComparison}
\BCap^{\overline Z}_{\theta,p}(E)\simeq\BCap^{\oXeps}_p(E).
\end{equation}

Now we discuss some notational caveats. The primary focus of this paper is a metric measure space $(Z,d,\nu)$ with 
$(Z,d)$ locally compact and bounded but not complete, and $\nu$ a doubling measure on $Z$. Thus the hyperbolic filling
uniform domain is associated not with $Z$ but its metric completion $\overline Z$. Thus $\partial \overline{X}_\eps=\overline Z$.
Since $Z$ is locally compact, it follows that $Z$ is relatively open in $\overline Z$. Moreover, as $X_\eps$ is a uniform domain
in the sense of Definition~\ref{def:uniform}, so is $\overline{X}_\eps\setminus\partial Z$.

For $x\in\overline{X}_\eps$ and $r>0$, by $B(x,r)$ we mean the ball centered at $x$, with radius $r$ in
	the metric $d_\eps$. At various stages we might want to focus on balls in $X_\eps$, $\overline{X}_\eps$, $Z$, or $\partial Z$, but    
	these are intersections of balls in $\overline{X}_\eps$ with the relevant subsets, and the need to distinguish these balls arise only
	when integrating over such balls; as the integrals are with respect to measures supported on these various subsets, the intersection
	of the ball in $\overline{X}_\eps$ with these subsets will be replaced here by the measures in the relevant integrals.

\subsection{Cheeger differential structure and the definition of differential $\nabla f$}\label{sub:diff-struct}

From~\cite{Chee} we know that whenever $(W,d_W,\mu_W)$ is a doubling metric measure space supporting a $(1,p)$-Poincar\'e 
inequality, there is a positive integer $N$ and a countable family of measurable sets $W_\alpha\subset W$ with $\mu_W(W_\alpha)>0$,
and for each $\alpha$ a Lipschitz map $\Phi_\alpha:W\to \R^N$, and a measurable inner product 
$W_\alpha\ni w\mapsto\langle\cdot,\cdot\rangle_w$
satisfying the following condition. Whenever $f:W\to\R$ is
a Lipschitz function, there is a measurable function $D_\alpha f:W_\alpha\to\R^N$ such that for $\mu_W$-a.e.~$w\in W_\alpha$ we have
\[
\limsup_{y\to w}\frac{f(y)-f(w)-\langle D_\alpha f(w),\Phi_\alpha(y)-\Phi_\alpha(w)\rangle_w}{d(y,w)}=0.
\]
Moreover $D_\alpha$ is a linear operator on the class of all Lipschitz functions on $W$, and 
\[
\langle D_\alpha f(w), D_\alpha f(w)\rangle_w\approx g_f(w)^2
\]
for $\mu_W$-a.e.~$w\in W_\alpha$; such a differential structure extends also to functions in $N^{1,p}(W,\mu_W)$.

In the setting where $W=X_\eps$, $d_W=d_\eps$, and $\mu_W=\mu_\beta$, there is a natural differential structure 
on $(X_\eps, d_\eps, \mu_\beta)$, for $X_\eps$ is a graph with edges that are intervals \emph{of varying length} (from the
uniformization procedure conducted to obtain $d_\eps$). We know that
each edge in $X_\eps$ forms a singleton family of paths in $X_\eps$ with positive $p$-modulus; hence
when $f\in N^{1,p}(\overline X_\eps,\mu_\beta)$, we know that $f$ is absolutely continuous on each such edge and hence is
differentiable there. In this case, we can set $Df(x)=f^\prime(x)$ for $\mathcal{H}^1$-almost every $x$ in that
edge by first choosing an orientation for that edge. The choice of orientation does not matter as long as
we consistently use the same orientation for all $f\in N^{1,p}(X_\eps,\mu_\beta)$; as $\mu_\beta(\partial X_\eps)=0$,
we know that such $\nabla f$ is well-defined $\mu_\beta$-a.e.~in $\overline{X}_\eps$. So we set 
$\nabla f(x):=f^\prime(x)$ whenever $x$ belongs to an edge and $f^\prime(x)$ exists; $\mu_\beta$-almost every
$x\in X_\eps$ is such a point. Thus $\nabla f$ is well-defined with $\nabla f:X_\eps\to\R$, with
$|\nabla f|^2=g_f^2=\langle \nabla f,\nabla f\rangle$. This differential structure is a special choice of a Cheeger
differential structure, where each $W_\alpha$ is an edge of $X_\eps$, and $\Phi_\alpha$ measures
the location of $x\in W_\alpha$ from one end of the edge (which also then determines the orientation of that edge)
with respect to the metric $d_\eps$. The fact that this structure satisfies the limit that defines the Cheeger differential
structure condition is merely Taylor's theorem for the interval.

In the setting of infinitesimally Hilbertian spaces (spaces where minimal weak upper gradients can be endowed with an inner product 
structure that agrees with the pointwise values of the upper gradient) as in~\cite{Gigli} (see also~\cite{AGS} for a nice
exposition on the topic of test plans and application of Wasserstein spaces), we can replace the Cheeger structure $Df$ with the
upper gradient structure. This is because under the assumption of infinitesimal Hilbertianity there is an Euler-Lagrange equation associated
with the upper gradient-based energy minimization problem. In this paper we focus on the natural differential structure $\nabla f$,
keeping in mind such a flexibility beyond the hyperbolic filling setting.

\begin{defn}\label{def:p harmonic ext}
Recalling the construction of the uniformization of the hyperbolic filling $X_\eps$ from 
Subsection~\ref{sec:HypFill}, we say that a function $\overline{u}\in N^{1,p}(X_\eps,\mu_\beta)=N^{1,p}(\overline{X}_\eps,\mu_\beta)$ is the
$p$-harmonic extension of a function $u\in B^\theta_{p,p}(Z,\nu)$ if $T_X(\overline{u})=u$ and whenever 
$v\in N^{1,p}(X_\eps,\mu_\beta)$ with $T_Xv=0$, we have 
\[
\int_{X_\eps}|\nabla \overline{u}|^{p-2}\langle \nabla\overline{u},\nabla v\rangle\, d\mu_\beta=0.
\]
\end{defn}

Given that functions in $B^\theta_{p,p}(Z,\nu)$ have an extension to $X_\eps$ that lies in $N^{1,p}(X_\eps,\mu_\beta)$,
such $p$-harmonic extensions always exist, and given the support of $p$-Poincar\'e inequality on $(X_\eps,d_\eps,\mu_\beta)$, we 
also have the uniqueness of such an extension, see~\cite[Theorem~7.2]{BB} or~\cite[Theorem~7.8, Theorem~7.14]{Chee}.

\subsection{Standing assumptions and statement of the Dirichlet problem for fractional operators on $Z$}\label{sub:standing}
In this final subsection of Section~2, we state the standing assumptions we will operate under for the rest of the paper, and 
state the Dirichlet problem that is the principal focus of the paper.

\noindent \emph{Standing assumptions:} In this paper, $(Z,d,\nu)$ is a locally compact, non-complete bounded metric 
measure space with $\nu$ a doubling measure on $Z$, and we equip $\partial Z:=\overline{Z}\setminus Z$ with the codimension
$\sigma$ Hausdorff measure with respect to $\nu$, namely $\pi:=\mathcal{H}^{-\sigma}_\nu$, and we extend $\nu$ to $\partial Z$ 
so that $\nu(\partial Z)=0$, and extend $\pi$ to $Z$ by setting $\pi(Z)=0$. 
We will also assume that $0<\pi(\partial Z)<\infty$.

We fix $\eps>\log 2$. Then, for each $\beta>0$ consider
$(X_\eps,d_\eps,\mu_\beta)$ to be the uniform domain with $\mu_\beta$ doubling on $X_\eps$ and supporting a $(1,1)$-Poincar\'e inequality,
and (by perturbing the metric on $Z$ in a biLipschitz fashion if necessary) so that $\overline{Z}=\partial X_\eps=\overline{X}_\eps\setminus X_\eps$.

Since $Z$ is locally compact, we know that $Z$ is an open subset of 
the compact metric space $\overline{Z}$. Observe that $\overline{Z}$ is compact because the measure $\nu$ on $Z$
is doubling, and so the zero-extension of $\nu$ to $\partial Z$ is also doubling; bounded complete doubling metric measure spaces
are necessarily compact, see for example~\cite[Lemma~4.1.14]{HKST}. 

\vskip .2cm


We now outline the construction of the nonlocal operator which forms the basis of our problem, as introduced in \cite{CKKSS}.

\begin{defn}\label{def:ET}
We fix $1<p<\infty$ and $0<\theta<1$, and choose $\beta=\eps\, (1-\theta)\, p$.
This choice leads to the Besov space $B^\theta_{p,p}(Z,\nu)$ as the trace space of the Newton-Sobolev space
$N^{1,p}(\overline X_\eps,\mu_\beta)$.  We define the operator $\mathcal{E}_T:B^\theta_{p,p}(\overline Z,\nu)\times B^\theta_{p,p}(\overline Z,\nu)\to\R$, induced from this relationship, by 
\[
\mathcal{E}_T(u,v):=\int_{X_\eps}|\nabla \overline{u}|^{p-2}\langle \nabla \overline{u},\, \nabla \overline{v}\rangle\, d\mu_\beta,
\]
where $\overline{u}$ and 
$\overline{v}$ are the $p$-harmonic extensions of $u$ and $v$ respectively to $X_\eps$ given by Definition~\ref{def:p harmonic ext}.
\end{defn}

The operator $\mathcal{E}_T$ has the following properties:

\begin{enumerate}
\item $\mathcal{E}_T(u,v+w)=\mathcal{E}_T(u,v)+\mathcal{E}_T(u,w)$,
\item $\mathcal{E}_T(0,v)=\mathcal{E}_T(u,0)=0$,
\item $\mathcal{E}_T(\alpha u,\beta v)=|\alpha|^{p-2}\alpha\beta\, \mathcal{E}_T(u,v)$ whenever $\alpha,\beta\in\R$,
\item $\mathcal{E}_T(u,u)\ge 0$, and $\mathcal{E}_T(u,u)=0$ if and only if $u$ is constant,
\item We also have the comparison
\[
\|u\|_{B^\theta_{p,p}(\overline Z,\nu)}^p:=\int_Z\int_Z\frac{|u(y)-u(x)|^p}{d(x,y)^{\theta p}\, \mu(B(y,d(x,y)))}\, d\nu(y)\, d\nu(x)
\approx \mathcal{E}_T(u,u).
\]
\end{enumerate}


\begin{remark}\label{rem:Et-2nd-trace}
In the definition of $\mathcal{E}_T$, we do not have to consider the $p$-harmonic extension of the second function, $v$; indeed,
any choice $\widehat{v}$ of extension of $v$ to $X_\eps$ would do as long as $T_X\widehat{v}=v$. This is because, as $\overline{u}$ itself
is $p$-harmonic in the domain $X_\eps$, we have that
\[
\int_{X_\eps}|\nabla \overline{u}|^{p-2}\langle \nabla \overline{u},\, \nabla(\widehat{v}-\overline{v})\rangle \, d\mu_\beta=0.
\]	
\end{remark}

In what follows, $T_X:N^{1,p}(\overline X_\eps,\mu_\beta)\to B^\theta_{p,p}(\overline Z,\nu)$ and
$T_Z:B^\theta_{p,p}(\overline Z,\nu)\to B^{\theta-\sigma/p}_{p,p}(\partial Z, \pi)$ are the relevant trace operators,
with $T_X$ as described in Theorem~\ref{thm:HypFillThm} above, and $T_Z$ as described in
Lemma~\ref{lem:traceZ} below. In particular, for $u\in B^\theta_{p,p}(\overline Z,\nu)$, and for $\zeta\in\partial Z$,
the number $T_Zu(\zeta)$, if it exists, satisfies 
\begin{equation}\label{eq:trace-defn}
\limsup_{r\to 0^+}\jint_{B(\zeta,r)}|u-T_Zu(\zeta)|\, d\nu=0.
\end{equation}

We will show below that if the measure $\pi=\mathcal{H}^{-\sigma}_\nu$ on $\partial Z$ 
is $\sigma$-codimensional with respect to $\nu$, then the trace operator $T_Z$
also exists, see the discussion in Section~\ref{Sec:4}. However, to discuss Besov functions on $Z$ with null
trace on $\partial Z$, we do not need $\pi$ to be $\sigma$-codimensional, see Section~\ref{Sec:3}.

The goal of this paper is to demonstrate the existence of solutions to the Dirichlet boundary value problem 
on $(Z,d,\nu)$ with boundary data defined on $\partial Z$, and prove stability properties of this problem. 
We now pose the problem studied here.

 
\begin{defn}\label{def:Dirichlet1}
Given $G\in L^{p'}(Z,\nu)$ with $p'=p/(p-1)$ 
and a function $f\in B^{\theta-\sigma/p}_{p,p}(\partial Z,\pi)$,
we say that $u\in B^\theta_{p,p}(\overline Z,\nu)$ is a solution to the \emph{$(f,G)$-inhomogeneous Dirichlet problem} if 
\begin{align*}
\mathcal{E}_T(u,v)=&\int_Z G\, v\, d\nu \text{ whenever }v\in B^\theta_{p,p}(\overline Z,\nu) \text{ with }T_Z v=0\ \ \text{$\pi$-a.e. in }\partial Z,\\
T_Zu=&f \ \ \text{$\pi$-a.e. in }\partial Z.
\end{align*}
\end{defn}

If $G\equiv 0$, then we call the $(f,G)$-inhomogeneous Dirichlet problem also as the \emph{homogeneous Dirichlet problem with
boundary data $f$}.

\begin{remark}\label{rem:Euler-Lagrange}
A function $u$ solves the above Dirichlet problem if and only if it is a minimizer of the energy 
\begin{equation}\label{eq:EnergyFunctional}
I_G(v):=\int_{\overline X_{\eps}}|\nabla \overline{v}|^p\,d\mu-p\int_Z vG\, d\nu
\end{equation}
over all functions $v\in B^\theta_{p,p}(\overline Z,\nu)$ with trace $T_Zv=f$ on $\partial Z$,
because the weak equation $\mathcal{E}_T(u,v)=\int_Zv\,G\, d\nu$ is the Euler-Lagrange equation corresponding to the 
minimization of the energy $I_G$. Recall that $\overline v$ denotes the $p$-harmonic extension of $v$ to $X_\eps$.
\end{remark}

\begin{remark}
Existence of the operator $T_Z$ is known only under certain circumstances, namely that the measure $\pi$ on
$\partial Z$ is $\sigma$-codimensional with respect to the measure $\nu$ on $Z$, see~\eqref{eq:codim-def} (with $s=\sigma$). 
However,even in the classical Euclidean setting, 
if $\partial Z$ does not support such a codimensional measure with respect to $\nu$, then neither the existence of 
such a trace operator, nor its boundedness, is known. In this case, we can consider a weaker boundary value problem, given
as follows. With the ``boundary data'' $f$ now belonging to $B^\theta_{p,p}(\overline Z,\nu)$, we want to find
$u\in B^\theta_{p,p}(\overline Z,\nu)$ such that
\begin{align*}
\mathcal{E}_T(u,v)=&\int_Z G\, v\, d\nu \text{ whenever }v\in B^\theta_{p,p}(\overline Z,\nu) \text{ with }T_Z v=0\, \text{$\pi$-a.e. in }\partial Z,\\
u-f\in & B^\theta_{p,p,0}(Z,\nu),
\end{align*}
where $B^\theta_{p,p,0}(Z,\nu)$ is the closure of the collection of functions from $B^\theta_{p,p}(\overline Z,\nu)$ which have compact support in $Z$,
see Proposition~\ref{prop:1} below.
\end{remark}

\subsection{Examples}\label{sec:Examples}

Our results can be applied for any domain in a doubling metric measure space for which the restriction of the ambient measure is also doubling, and whose boundary is equipped with a measure which is codimensional with respect to the ambient measure.  While our results can therefore be applied in the setting of Riemannian manifolds, Carnot groups, and other metric measure spaces which support a Poincar\'e inequality, such as the Laakso space and Bourdon-Pajot space, our results do not require the assumption of a Poincar\'e inequality.  As such, they are also applicable in numerous fractal-type spaces where Poincar\'e inequalities do not hold.  We now provide a few examples of such settings.


\begin{exa}[Rickman's rug]
Let $L$ be the standard vonKoch snowflake curve, which is of dimension $d_L:=\log(4)/\log(3)$. The metric space 
$Z=L\times (0,1)$ is equipped with the product measure $\mu$ obtained from the Hausdorff measure on $L$ with the Lebesgue measure
on $(0,1)$. Note that $\partial Z=L\times\{0\}\cup L\times\{1\}$ is equipped with the Hausdorff measure $\pi$ of $L$.
In this case $\mu$ is Ahlfors $d_L+1$-regular, and $\pi$ is Ahlfors $d_L$-regular, which is $\sigma$-codimensional to $\mu$
where $\sigma=1$. Our theorems apply here with $1<p<\infty$ and $1/p<\theta<1$.

On the other hand, if $L_1$ is the vonKoch snowflake curve without its two end points, and $Z=L_1\times[0,1]$, then
$\partial Z$ consists of two disjoint copies of the interval $[0,1]$, and hence in this case
$\sigma=d_L$. In this case, our theorems apply with $d_L<p<\infty$ and $d_L/p<\theta<1$.
\end{exa}

\begin{exa}[Fractal examples]
Fractals that arise from a self-similar construction form a rich subclass of examples for us. As described by
Hino and Kumagai in~\cite{HinoKumagai}, if we start with a finite collection of Euclidean similarities with the
same scaling constant that leads
to a fractal set as the compact invariant set, then a small subcollection of these similarities yields a compact
subset that is the boundary of the domain that is the original invariant set sans the subset. Both invariant sets
are Ahlfors regular, but with different dimensions. If there are $n$ number of similarities in the original collection
and $m$ number of similarities in the subcollection, and if the similarities have scaling $\lambda$ with $0<\lambda<1$,
then the co-dimensionality $\sigma=\log(n/m)/\log(1/\lambda)$. 

As a concrete example we may consider the Sierpinski carpet, which has $[0,1]\times\{0\}$ as one side-edge. With $Z$ the
carpet with this edge removed, we have that the boundary $\partial Z$ is this edge. As the dimension of the carpet 
is $\log(8)/\log(3)$ and the edge has dimension $1$, in this case we have that $\sigma=\log(8/3)/\log(3)$, and
so we are permitted to consider $1<p<\infty$ and $\sigma/p<\theta<1$.

Another concrete example is the pentagasket (called the pentakun in~\cite{HinoKumagai}), with a Koch-type fractal curve
forming part of the boundary. As explained in~\cite[Subection~5.2, page~604]{HinoKumagai}, the dimension of the
pentagasket is $\log(5)/\log(\alpha)$ with $\alpha=\tfrac{3+\sqrt{5}}{2}$, and the dimension of the Koch-type curve
is $\log(4)/\log(\alpha)$, leading us to the co-dimension $\sigma=\log(5/4)/\log(\alpha)<1$, and so again
we can consider $1<p<\infty$ and $\sigma/p<\theta<1$.

The paper~\cite{HinoKumagai} has a lovely illustration of these two concrete examples. That paper also considers 
a trace result of a Besov class from the larger fractal to the smaller fractal boundary, but the Besov class on the 
larger fractal is different than the one considered in our paper and is related more to the Korevaar-Schoen spaces
rather than Besov spaces, as discussed for example in~\cite{Bad1, AR-Bad1,MurShim}.
\end{exa}

\section{Besov functions with zero trace}\label{Sec:3}

To define what a solution to the problem of interest is, we need a solid foundation of the theory of functions in
$B^\theta_{p,p}(Z,\nu)=B^\theta_{p,p}(\overline{Z},\nu)$ that have zero trace at $\partial Z$, 
see Definition~\ref{def:Dirichlet1}. The focus of this section is to 
lay such a foundation. We do so with minimal assumptions on the boundary $\partial Z$ of $Z$ as such a foundation may be useful in
other contexts as well.

Recall the standing assumptions from subsection~\ref{sub:standing}. 
In the next section, we will assume that $\Ha^{-\sigma}_\nu$ is codimension $\sigma$ Ahlfors regular with respect to $\nu$;
but we do not need this strong assumption in this section. We merely continue to assume that $0<\pi(\partial Z)<\infty$ and that $\nu(\partial Z)=0$. 
Since $\nu(\partial Z)=0$, we also have that $B^\theta_{p,p}(Z,\nu)=B^\theta_{p,p}(\overline{Z},\nu)$.  Note that as $(Z,d,\nu)$ is locally compact, $Z$ is open in $\overline Z$. 

\begin{prop}\label{prop:1}
    Let $0<\theta<1$, and let $p>\max\{1,\sigma/\theta\}$.  For each $k\in\N$, let $u_k\in B^\theta_{p,p}(\overline Z,\nu)$ be compactly supported in $Z$, and suppose that there exists $u\in B^\theta_{p,p}(\overline Z,\nu)$ such that $u_k\to u$ in $B^\theta_{p,p}(\overline Z,\nu)$.  Then for 
$\pi$-a.e.\ $\zeta\in\partial Z$, we have
    \[
    \lim_{r\to 0^+}\fint_{B(\zeta,r)}|u|^pd\nu=0.
    \]
In particular, the number $T_Zu(\zeta)=0$ for $\pi$-a.e.~$\zeta\in\partial Z$.
\end{prop}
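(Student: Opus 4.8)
The plan is to show that the averaged $L^p$-energy of $u$ over small balls centered at boundary points tends to zero, by exploiting the compact support of the approximating functions $u_k$ together with a capacity/measure estimate that converts Besov-norm smallness into smallness of a boundary "trace set." First I would observe that since each $u_k$ is compactly supported in $Z$, and $Z$ is open in $\overline Z$, for each $k$ we have $\operatorname{dist}(\operatorname{supt} u_k,\partial Z)>0$; hence the quantity $\fint_{B(\zeta,r)}|u_k|^p\,d\nu$ vanishes for all $\zeta\in\partial Z$ once $r$ is small enough (depending on $k$). So the content is entirely in controlling the error $u-u_k$ near $\partial Z$ uniformly in $\zeta$ off a $\pi$-null set.

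The key step is a maximal-function / capacitary argument. For a Besov function $w\in B^\theta_{p,p}(\overline Z,\nu)$, introduce the restricted maximal operator $M^* w(\zeta):=\sup_{0<r<r_0}\fint_{B(\zeta,r)}|w|^p\,d\nu$ for $\zeta\in\partial Z$, and the "bad set" $E_\lambda:=\{\zeta\in\partial Z: \limsup_{r\to 0^+}\fint_{B(\zeta,r)}|w|^p\,d\nu>\lambda\}$. I would prove a weak-type bound of the form
\[
\pi(E_\lambda)\le \frac{C}{\lambda}\,\Big(\|w\|_{L^p(Z,\nu)}^p + \|w\|_{B^\theta_{p,p}(\overline Z,\nu)}^p\Big),
\]
i.e.\ that the "upper trace energy" of $w$ controls a boundary measure. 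The mechanism: for $\zeta\in E_\lambda$ there is a sequence $r_j\to 0$ with $\fint_{B(\zeta,r_j)}|w|^p\,d\nu>\lambda$; a Vitali covering argument on $\partial Z$ using such balls, combined with the definition $\pi=\Ha_\nu^{-\sigma}$ (so $\pi(B(\zeta,r))\simeq \nu(B(\zeta,r))/r^\sigma$ along the covering, using $0<\pi(\partial Z)<\infty$), reduces the estimate to summing $\nu(B(\zeta_i,r_i))/r_i^\sigma$ against $r_i^{-\sigma}\int_{B(\zeta_i,r_i)}|w|^p\,d\nu / \lambda$. The Besov seminorm enters to absorb the factor $r_i^{-\sigma}$: one splits $\int_{B(\zeta_i,r_i)}|w|^p$ using $|w(x)|^p\lesssim |w_{B(\zeta_i,2r_i)}|^p + |w(x)-w_{B(\zeta_i,2r_i)}|^p$, estimates the oscillation term by a telescoping/Poincaré-type bound in terms of the Besov double integral (this is where $\theta p>\sigma$ is used, so that the geometric series in the chain of dyadic balls converges), and notes that the constant term is harmless because if $|w_{B(\zeta_i,2r_i)}|$ were comparable to the average over the smaller ball it would again be dominated by oscillation plus the average over the even smaller ball — iterating, the limsup being large forces the oscillation energy near $\zeta$ to be non-negligible. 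Applying this with $w=u-u_k$ gives $\pi(E_{\lambda,k})\le C\lambda^{-1}\|u-u_k\|_{B^\theta_{p,p}}^p\to 0$ as $k\to\infty$, and since on $\partial Z$ the limsup for $u$ is bounded by the limsup for $u-u_k$ (the $u_k$ part vanishing near $\partial Z$), a standard $\limsup_k$ / intersection-over-$\lambda\downarrow 0$ argument shows the limsup for $u$ vanishes $\pi$-a.e.

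The last line, $T_Zu(\zeta)=0$ for $\pi$-a.e.\ $\zeta$, is then immediate from the definition \eqref{eq:trace-defn}: since $\fint_{B(\zeta,r)}|u|\,d\nu\le\big(\fint_{B(\zeta,r)}|u|^p\,d\nu\big)^{1/p}\to 0$ by Jensen, the value $T_Zu(\zeta)=0$ satisfies the defining relation. I expect the main obstacle to be the telescoping oscillation estimate that trades the singular weight $r^{-\sigma}$ against the Besov double integral while keeping the constant $C$ uniform in $\zeta$ and summable over the Vitali cover; the condition $p>\sigma/\theta$ is exactly what makes the resulting geometric series converge, and getting the bookkeeping right (in particular handling the "average term" so it does not survive in the limit) is the delicate part. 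An alternative, possibly cleaner, route is to use the capacity comparison \eqref{eq:CapComparison} and the trace/extension theory of \cite{BBS} to transfer the whole statement to the hyperbolic filling $\overline X_\eps$, where it becomes the classical fact that a Newton–Sobolev function which is a limit of functions compactly supported in the uniform domain $X_\eps$ has zero trace $\BCap_p^{\overline X_\eps}$-q.e.\ (hence $\pi$-a.e., since $\pi$ is absolutely continuous with respect to that capacity) on $\partial X_\eps=\overline Z$; I would likely present the direct Besov argument but remark on this equivalence.
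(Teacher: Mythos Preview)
Your alternative hyperbolic-filling route is the one the paper actually takes, but both of your approaches miss the main step. In the direct argument, the weak-type bound $\pi(E_\lambda)\le C\lambda^{-1}\bigl(\|w\|_{L^p}^p+\|w\|_{B^\theta_{p,p}}^p\bigr)$ is not established by the sketch: the iteration you describe for the ``average term'' $|w_{2B_i}|^p$ does not produce a summable control on $\sum_i r_i^{-\sigma}\nu(B_i)|w_{2B_i}|^p$ over the Vitali cover, and there is no Poincar\'e inequality on $Z$ to fall back on. In the filling route there are two issues. First, $E_Xu_k$ is \emph{not} compactly supported in $X_\eps$ (since $T_XE_Xu_k=u_k\ne 0$ on $\operatorname{supt}(u_k)\subset Z\subset\partial X_\eps$); it is compactly supported in $\overline X_\eps\setminus\partial Z$, so the zero-trace conclusion from \cite{KKST} applies only on $\partial Z$. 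Second and more substantively, \cite{KKST} gives $\lim_{r\to 0}\fint_{B(\zeta,r)}|E_Xu|\,d\mu_\beta=0$, an average of $E_Xu$ over the filling with respect to $\mu_\beta$; this is \emph{not} the desired statement $\lim_{r\to 0}\fint_{B(\zeta,r)}|u|^p\,d\nu=0$ about $u$ on $Z$ with respect to $\nu$.

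Bridging these two notions of trace is the actual content of the paper's proof: for $\zeta\in\partial Z$ with $T_XE_Xu(\zeta)=0$ and for Lebesgue points $w\in B(\zeta,r)\cap Z$ of $E_Xu$, one writes $|u(w)|=|T_XE_Xu(w)-T_XE_Xu(\zeta)|$ and telescopes along the chain of balls from Lemma~\ref{lem:Chain} in the uniform domain $X_\eps$, using the $(1,p)$-Poincar\'e inequality there (which $Z$ itself lacks) to obtain
\[
|u(w)|^p\lesssim d(w,\zeta)^{p-\beta/\eps-\delta}\sum_{k\in\Z}\frac{r_k^{\beta/\eps+\delta}}{\mu_\beta(B_k)}\int_{4B_k}g_{E_Xu}^p\,d\mu_\beta.
\]
Integrating in $w$ and applying Tonelli gives $\fint_{B(\zeta,r)}|u|^p\,d\nu\lesssim r^p\fint_{B(\zeta,Cr)}g_{E_Xu}^p\,d\mu_\beta$; since $p>\sigma+\beta/\eps$, the right side tends to zero $\Ha^{-(\sigma+\beta/\eps)}_{\mu_\beta}$-a.e., hence $\pi$-a.e. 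Your proposal contains no analog of this chain estimate, and without it neither route reaches the conclusion.
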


\begin{proof}
    Fix $\alpha>2$, $\eps=\log\alpha$, and $\beta=\eps\, (1-\theta)\, p$ in the construction of the uniform domain $(X_\eps,d_\eps,\mu_\beta)$ for which 
    $\overline{Z}=\partial X_\eps$.
By Theorem~\ref{thm:HypFillThm}, we have that $E_Xu_k,\,E_Xu\in N^{1,p}(\overline X_\eps,\mu_\beta)$.
 Moreover,  $E_Xu_k\to E_Xu$ in $N^{1,p}(\overline X_\eps,\mu_\beta)$, which follows from the boundedness and 
 linearity of the extension operator $E_X$. 
 Since the $u_k$ are compactly supported in $Z$, it follows 
 from the construction of the extension operator $E_X$, see~\eqref{eq:Ex-def},
 that the functions $E_Xu_k$ are compactly supported in $\overline X_\eps\setminus\partial Z$, and so $E_Xu_k\in N^{1,p}_0(\overline X_\eps\setminus \partial Z,\mu_\beta)$.  
Note also that $\overline{X}_\eps\setminus\partial Z$ is a uniform domain as well.
As $N^{1,p}_0(\overline X_\eps\setminus\partial Z,\mu_\beta)$ is a closed subspace of $N^{1,p}(\overline X_\eps,\mu_\beta)$, see \cite[Theorem~2.36]{BB} for example, it follows that $E_Xu\in N^{1,p}_0(\overline X_\eps\setminus\partial Z,\mu_\beta)$.  From \cite[Theorem~1.1]{KKST}, it then follows 
that for $\BCap^{\oXeps}_p$--q.e.\ $\zeta\in\partial Z$, we have
    \begin{equation}\label{eq:Eu Zero}
    \lim_{r\to 0^+}\fint_{B_\eps(\zeta,r)}|E_Xu|d\mu_\beta=0.
    \end{equation}
Under our hypotheses on $p$ and the construction of the measure $\pi$ supported on $\partial Z$, we will see towards the end of this
proof that $\BCap^{\oXeps}_p$-null subsets of
$\partial Z$ are necessarily $\pi$-null.
    By the construction of the trace operator $T_X:N^{1,p}(\overline X_\eps,\mu_\beta)\to B^\theta_{p,p}(\overline Z,\nu)$ given in 
    Theorem~\ref{thm:HypFillThm}, see~\eqref{eq:Tx-def}, the condition
    \eqref{eq:Eu Zero} implies that $T_X\circ E_Xu(\zeta)=0$. 

    Fix $r>0$, and let $\zeta\in\partial Z$ such that \eqref{eq:Eu Zero} holds.  By Theorem~\ref{thm:HypFillThm}, we have that $\nu$-a.e.\ $w\in B_Z(\zeta,r)$ is a Lebesgue point of $E_Xu$ with $T_X\circ E_Xu(w)=u(w)$.  As $X_\eps$ is a uniform domain in its completion, we can join such $w$ to $\zeta$ by a uniform curve, and obtain the corresponding chain of balls $\{B_k\}_{k\in\Z}$ given by Lemma~\ref{lem:Chain}.  We then have that 
    \begin{align*}
    |u(w)|=|T_X\circ E_Xu(w)-T_X\circ E_X u(\zeta)|
    \le\sum_{k\in\Z}|(E_Xu)_{B_k}-(E_Xu)_{B_{k+1}}|\lesssim\sum_{k\in\Z}r_k\left(\fint_{4B_k}g_{E_Xu}^pd\mu_\beta\right)^{1/p},
    \end{align*}
    where $r_k:=\rad(B_k)$, and we have applied the $(1,p)$-Poincar\'e inequality to obtain the last inequality. 

    Since $\beta/\eps=p(1-\theta)$, we can choose $\delta>0$ such that $\beta/\eps+\delta<p$.  By the H\"older inequality, we then have that 
    \begin{align*}
        |u(w)|&\lesssim\sum_{k\in\Z}r_k^{1-\frac{\beta/\eps+\delta}{p}}\left(r_k^{\beta/\eps+\delta}\fint_{4B_k}g_{E_Xu}^pd\mu_\beta\right)^{1/p}\\
        &\le\left(\sum_{k\in\Z}r_k^{\frac{p-\beta/\eps-\delta}{p-1}}\right)^{1-1/p}\left(\sum_{k\in\Z}r_k^{\beta/\eps+\delta}\fint_{4B_k}g_{E_Xu}^pd\mu_\beta\right)^{1/p}.
    \end{align*}
By Lemma~\ref{lem:Chain}, we have that 
\[
\sum_{k\in\Z}r_k^{\frac{p-\beta/\eps-\delta}{p-1}}\simeq d(w,\zeta)^{\frac{p-\beta/\eps-\delta}{p-1}}\sum_{k\in\Z}2^{-|k|\frac{p-\beta/\eps-\delta}{p-1}}\simeq d(w,\zeta)^{\frac{p-\beta/\eps-\delta}{p-1}},
\]
and so it follows that 
\begin{align*}
    |u(w)|^p\lesssim d(w,\zeta)^{p-\beta/\eps-\delta}\sum_{k\in\Z}\frac{r_k^{\beta/\eps+\delta}}{\mu(B_k)}\int_{4B_k}g_{E_Xu}^pd\mu_\beta.
\end{align*}

For $k\ge 0$, we have from the Lemma~\ref{lem:Chain} that $r_k\simeq d(x,w)$ for each $x\in 4B_k$.  Furthermore, by Theorem~\ref{thm:HypFillThm}, we have that 
\[
\mu_\beta(B_k)\simeq\mu_\beta(B(w,d(x,w)))\simeq d(x,w)^{\beta/\eps}\nu(B(w,d(x,w)))
\]
for each $x\in 4B_k$. Likewise, for $k<0$, the same comparisons hold with $w$ replaced by $\zeta$.  Therefore, letting $C_{w,\zeta}^1:=\bigcup_{k\ge 0}4B_k$ and $C_{w,\zeta}^2:=\bigcup_{k<0}4B_k$, we have that 
\begin{align*}
    |u(w)|^p&\lesssim d(w,\zeta)^{p-\beta/\eps-\delta}\left(\int_{C_{w,\zeta}^1}\frac{g_{E_Xu}(x)^p\,d(x,w)^\delta}{\nu(B(w,d(x,w)))}d\mu_\beta(x)
    +\int_{C_{w,\zeta}^2}\frac{g_{E_Xu}(x)^p\,d(x,\zeta)^\delta}{\nu(B(\zeta,d(x,\zeta)))}d\mu_\beta(x)\right).
\end{align*}
Hence, we have
\begin{align}\label{eq:I+II}
\int_{B(\zeta,r)}|u|^pd\nu&\lesssim\int_{B(\zeta,r)}\int_{C_{w,\zeta}^1}\frac{g_{E_Xu}(x)^p\,d(x,w)^\delta d(w,\zeta)^{p-\beta/\eps-\delta}}{\nu(B(w,d(x,w)))}d\mu_\beta(x)d\nu(w)\nonumber\\
    &+\int_{B(\zeta,r)}\int_{C_{w,\zeta}^2}\frac{g_{E_Xu}(x)^p\,d(x,\zeta)^\delta d(w,\zeta)^{p-\beta/\eps-\delta}}{\nu(B(\zeta,d(x,\zeta)))}d\mu_\beta(x)d\nu(w)=:I+II.
\end{align}

We first estimate $I$.  Note that there exists a constant $C\ge 1$ independent of $w$ such that $C^1_{\zeta,w}\subset B(\zeta,Cr)$.  By Tonelli's theorem, we then have that 
\begin{align*}
    I&\le\int_{B(\zeta,Cr)}g_{E_Xu}(x)^p\int_{B(\zeta,r)}\frac{d(x,w)^\delta d(w,\zeta)^{p-\beta/\eps-\delta}}{\nu(B(w,d(x,w)))}\chi_{C_{w,\zeta}^1}(x)d\nu(w)d\mu_\beta(x).
\end{align*}
Note that if $x\in C_{w,\zeta}^1$, then $d(x,w)\simeq d(x,\overline Z)$, and so there exists $C\ge 1$ so that $w\in B(x,Cd(x,\overline Z))$.  Using this, and the doubling property of $\nu$, it follows that for $x\in B(\zeta,Cr)$,
\begin{align*}
   \int_{B(\zeta,r)}\frac{d(x,w)^\delta d(w,\zeta)^{p-\beta/\eps-\delta}}{\nu(B(w,d(x,w)))}&\chi_{C_{w,\zeta}^1}(x)d\nu(w)\\
   &\lesssim r^{p-\beta/\eps}\int_{B(x,Cd(x,\overline Z))}\frac{1}{\nu(B(w,d(x,w)))}d\nu(w)\lesssim r^{p-\beta/\eps}.
\end{align*}
Hence, we have that 
\begin{equation}\label{eq:I}
I\lesssim r^{p-\beta/\eps}\int_{B(\zeta,Cr)}g_{E_Xu}^pd\mu_\beta.
\end{equation}
We estimate $II$ in a similar manner: using Tonelli's theorem and the fact that  $x\in C_{w,\zeta}^2$ implies that $d(x,\zeta)\simeq d(x,\overline Z)$, we obtain 
\[
II\lesssim r^{p-\beta/\eps}\int_{B(\zeta,Cr)}g_{E_Xu}^pd\mu_\beta.
\]

Combining this estimate with \eqref{eq:I} and \eqref{eq:I+II}, and using Theorem~\ref{thm:HypFillThm}, we obtain 
\[
\fint_{B(\zeta,r)}|u|^pd\nu\lesssim\frac{r^p}{r^{\beta/\eps}\nu(B(\zeta,r))}\int_{B(\zeta,Cr)}g_{E_Xu}^pd\mu_\beta\lesssim r^p\fint_{B(\zeta,Cr)}g_{E_Xu}^pd\mu_\beta
\]
for $\BCap_p^{\oXeps}$-q.e.\ $\zeta\in\partial Z$ and every $r>0$.  Since $p>\sigma/\theta$ by assumption, and $\beta/\eps=p(1-\theta)$, it follows that $p>\sigma+\beta/\eps$.  Therefore, by \cite[Proposition 3.11]{GKS}, the above inequality holds for $\Ha^{-(\sigma+\beta/\eps)}_{\mu_\beta}$-a.e.\ $\zeta\in\partial Z$.  Furthermore, by \cite[Lemma~3.10]{GKS}, we have that 
\[
\limsup_{r\to 0^+} r^p\fint_{B(\zeta,Cr)}g_{E_Xu}^pd\mu_\beta\le\limsup_{r\to 0^+}r^{\sigma+\beta/\eps}\fint_{B(\zeta,Cr)}g_{E_Xu}^pd\mu_\beta=0
\]
for $\Ha^{-(\sigma+\beta/\eps)}_{\mu_\beta}$-a.e.\ $\zeta\in\partial Z$.  Since $\pi\simeq\Ha^{-(\sigma+\beta/\eps)}_{\mu_\beta}|_{\overline Z}$ by Theorem~\ref{thm:HypFillThm}, it follows that 
\[
\lim_{r\to 0^+}\fint_{B(\zeta,r)}|u|^pd\nu=0.
\]
 for $\pi$-a.e.\ $\zeta\in\partial Z$.    
\end{proof}

\begin{prop}\label{prop:2}
Let $u\in B^\theta_{p,p}(\overline Z,\nu)$ be such that $T_Zu=0$\, $\BCap^{\overline Z}_{\theta,p}$-q.e.~on $\partial Z$.
Then there exist $\{u_k\}_{k\in\N}\subset B^\theta_{p,p}(\overline Z,\nu)$ with $\supt(u_k)\Subset Z$ such that $u_k\to u$ in $B^\theta_{p,p}(\overline Z,\nu)$. 
\end{prop}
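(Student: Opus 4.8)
The plan is to push the problem to the uniformized hyperbolic filling, where $B^\theta_{p,p}(\overline Z,\nu)$ is the trace of a first-order Newton--Sobolev space and the theory of functions vanishing on part of the boundary is available. Fix $\alpha>2$, $\eps=\log\alpha$ and $\beta=\eps(1-\theta)p$, so that $\theta=1-\beta/(\eps p)$ and the operators $T_X,\,E_X$ of Theorem~\ref{thm:HypFillThm} act between $N^{1,p}(\overline X_\eps,\mu_\beta)$ and $B^\theta_{p,p}(\overline Z,\nu)$, with $\overline Z=\partial X_\eps$. Put $v:=E_Xu\in N^{1,p}(\overline X_\eps,\mu_\beta)$, so $T_Xv=u$, and recall that $\overline X_\eps\setminus\partial Z$ is a uniform domain with $\overline X_\eps\setminus(\overline X_\eps\setminus\partial Z)=\partial Z$. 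The whole proof reduces to showing
\[
v\in N^{1,p}_0(\overline X_\eps\setminus\partial Z,\mu_\beta).
\]
Granting this, the $(1,1)$-Poincar\'e inequality on the complete doubling space $\overline X_\eps$ lets us choose $w_k\in N^{1,p}(\overline X_\eps,\mu_\beta)$ with $\supt(w_k)$ a compact subset of $\overline X_\eps\setminus\partial Z$ and $w_k\to v$ in $N^{1,p}(\overline X_\eps,\mu_\beta)$ (density of such functions in $N^{1,p}_0$ of the uniform domain; see~\cite{BB}). Set $u_k:=T_Xw_k$. Since each $w_k$ vanishes on a neighborhood of $\partial Z$ in $\overline X_\eps$, formula~\eqref{eq:Tx-def} forces $u_k$ to vanish on a neighborhood of $\partial Z$ in $\overline Z$, so $\supt(u_k)$ is a compact subset of the open set $Z$; and by boundedness of $T_X$ we get $u_k\in B^\theta_{p,p}(\overline Z,\nu)$ and $u_k=T_Xw_k\to T_Xv=u$ in $B^\theta_{p,p}(\overline Z,\nu)$, as required.

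To establish $v\in N^{1,p}_0(\overline X_\eps\setminus\partial Z,\mu_\beta)$ I will use the characterization of this space in the doubling $p$-Poincar\'e space $\overline X_\eps$, i.e. the converse of the implication from~\cite[Theorem~1.1]{KKST} already used in the proof of Proposition~\ref{prop:1}: it suffices to show
\[
\lim_{r\to0^+}\fint_{B(\zeta,r)}|v|\,d\mu_\beta=0\qquad\text{for }\BCap^{\oXeps}_p\text{-q.e. }\zeta\in\partial Z.
\]
First, the hypothesis means, via~\eqref{eq:trace-defn}, that $\fint_{B(\zeta,s)}|u|\,d\nu\to0$ as $s\to0^+$ for $\BCap^{\overline Z}_{\theta,p}$-q.e.\ $\zeta\in\partial Z$, hence, by the capacity comparison~\eqref{eq:CapComparison}, for $\BCap^{\oXeps}_p$-q.e.\ $\zeta\in\partial Z$. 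Next I estimate the left-hand side using the explicit description~\eqref{eq:Ex-def} of $E_X$ and the geometry of the uniformized filling: the edges of $X_\eps$ meeting $B_\eps(\zeta,r)$ are those incident to vertices $(\xi,n)$ with $d(\xi,\zeta)\lesssim r$ and $\alpha^{-n}\lesssim r$; such an edge carries $\mu_\beta$-mass $\simeq\nu(B_Z(\xi,\alpha^{-n}))\,e^{-\beta n}$, and on it $|v|$ is at most the larger of the endpoint values $|\fint_{B_Z(\xi,\alpha^{-n})}u\,d\nu|\le\fint_{B_Z(\xi,\alpha^{-n})}|u|\,d\nu$. For each level $n$ the balls $\{B_Z(\xi,\alpha^{-n}):\ (\xi,n)\in A_n\}$ have bounded overlap and lie in $B_Z(\zeta,Cr)$, so summing over $\xi$ and then over the levels with $\alpha^{-n}\lesssim r$ (a convergent geometric series in $e^{-\beta n}$ with sum $\simeq r^{\beta/\eps}$), and using $\mu_\beta(B_\eps(\zeta,r))\simeq r^{\beta/\eps}\nu(B_Z(\zeta,r))$ from Theorem~\ref{thm:HypFillThm} together with the doubling of $\nu$, yields
\[
\fint_{B(\zeta,r)}|v|\,d\mu_\beta\ \lesssim\ \frac{r^{\beta/\eps}}{\mu_\beta(B_\eps(\zeta,r))}\int_{B_Z(\zeta,Cr)}|u|\,d\nu\ \simeq\ \fint_{B_Z(\zeta,Cr)}|u|\,d\nu.
\]
Letting $r\to0^+$ and invoking the first step completes this step, and hence the proof.

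The main obstacle is the edge-counting and weight-bookkeeping in the second half of the previous paragraph: one must use the scale estimates for the uniformized metric $d_\eps$ near a boundary point of $\partial Z$ to identify exactly which edges of $X_\eps$ lie in $B_\eps(\zeta,r)$ and how $\mu_\beta$ distributes over them, so that the $\mu_\beta$-average of $E_Xu$ over $B_\eps(\zeta,r)$ is dominated by a genuine $\nu$-average of $|u|$ over a comparable ball in $Z$; this is where the codimension relations of~\cite{BBS} (summarized in Theorem~\ref{thm:HypFillThm}) do the real work, much as in the proof of Proposition~\ref{prop:1} but in the reverse direction. A secondary point is simply to keep the two capacities aligned, which is precisely the role of~\eqref{eq:CapComparison}; note that no codimensionality or fatness hypothesis on $\pi$ is needed, consistent with the standing assumptions of Section~\ref{Sec:3}.
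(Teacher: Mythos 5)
Your proposal is correct and follows essentially the same route as the paper's proof: extend by $E_X$, show $\fint_{B(\zeta,r)}|E_Xu|\,d\mu_\beta\lesssim\fint_{B_Z(\zeta,Cr)}|u|\,d\nu$ via the structure of $E_X$, bounded overlap, and the codimensional relation, transfer the quasi-everywhere vanishing through the capacity comparison~\eqref{eq:CapComparison}, invoke \cite[Theorem~1.1]{KKST} to conclude $E_Xu\in N^{1,p}_0(\oXeps\setminus\partial Z,\mu_\beta)$, approximate by compactly supported Newton--Sobolev functions, and push back with $T_X$. The only cosmetic difference is your edge-by-edge bookkeeping for the averaging estimate, where the paper sums level-by-level over the vertex balls $B_Z(z_{i,j},\alpha^{-i})$; the two computations are equivalent.
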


\begin{proof}
    Suppose that $\zeta\in\partial Z$ is such that $T_Zu(\zeta)=0$.  Consider $E_Xu\in N^{1,p}(\overline X_\eps,\mu_\beta)$ as given by Theorem~\ref{thm:HypFillThm}, and fix $0<r<2\diam(Z)$.  Let $N_r\in\N$ be the smallest positive integer such that $\alpha^{-N_r}<r$, and for $i\ge N_r-1$, let 
    \[
    I_{i,r}:=\{j\in\N:z_{i,j}\in A_i\cap B_\eps(\zeta,r)\},
    \]
    where $A_i$ is the maximal $\alpha^{-i}$-separated set chosen in the construction of the hyperbolic filling, see Section~\ref{sec:HypFill}.  
    Recall that we adopt the convention that $B(x,r)$, $x\in\overline{X}_\eps$ and $r>0$, is the ball with respect to the metric $d_\eps$.
    However, keeping in mind that the original metric $d$ on $Z$ differs (biLipschitzly) from $d_\eps$ and that the hyperbolic filling
    used the metric $d$, we will use the notation $B_Z(\xi,r)$ for $\xi\in Z$ and $r>0$ to denote balls in $Z$ with respect to the metric $d$,
    as this is used to construct the extension operator $E_X$.
    By the construction of the extension operator $E_X$, we then have that 
    \begin{align*}
        \fint_{B(\zeta,r)}|E_Xu|d\mu_\beta&\lesssim\frac{1}{\mu_\beta(B(\zeta,r))}\sum_{i=N_r-1}^\infty\sum_{j\in I_{i,r}}\int_{B(z_{i,j},\alpha^{-i})}|E_Xu|d\mu_\beta\\
        &\lesssim\frac{1}{\mu_\beta(B(\zeta,r))}\sum_{i=N_r-1}^\infty\sum_{j\in I_{i,r}}\mu_\beta(B(z_{i,j},\alpha^{-i}))|u_{B_Z(z_{i,j},\alpha^{-i})}|\\
        &\lesssim\frac{1}{\mu_\beta(B(\zeta,r))}\sum_{i=N_r-1}^\infty\sum_{j\in I_{i,r}}\alpha^{-i\beta/\eps}\nu(B_Z(z_{i,j},\alpha^{-i}))\fint_{B_Z(z_{i,j},\alpha^{-i})}|u|d\nu\\
        &=\frac{1}{\mu_\beta(B(\zeta,r))}\sum_{i=N_r-1}^\infty\alpha^{-i\beta/\eps}\sum_{j\in I_{i,r}}\int_{B_Z(z_{i,j},\alpha^{-i})}|u|d\nu\\
        &\lesssim\frac{1}{\mu_\beta(B(\zeta,r))}\int_{B_Z(\zeta,2r)}|u|d\nu\sum_{i=N_r-1}^\infty\alpha^{-i\beta/\eps}\\
        &\lesssim\frac{r^{\beta/\eps}}{\mu_\beta(B(\zeta,r))}\int_{B_Z(\zeta,2r)}|u|d\nu\simeq \fint_{B_Z(\zeta,2r)}|u|d\nu.
    \end{align*}
    Here we have used the bounded overlap of the collection $\{B_Z(z_{i,j},\alpha^{-i})\}_{j\in I_{i,r}}$ as well as the codimensional relationship between $\mu_\beta$ and $\nu$ as given by Theorem~\ref{thm:HypFillThm}.  Since $T_Zu(\zeta)=0$, it follows that 
    \begin{equation}\label{eq:Eu Zero II }
    \lim_{r\to 0^+}\fint_{B(\zeta,r)}|E_Xu|d\mu_\beta=0.
    \end{equation}
    
    Since $T_Zu(\zeta)=0$ for 
$\BCap_{\theta,p}^{\overline Z}$-q.e.~$\zeta\in\partial Z$, it follows from~\cite[Proposition~13.2]{BBS} that~\eqref{eq:Eu Zero II } holds for
$\BCap_p^{\oXeps}$-q.e.\ $\zeta\in\partial Z$. Therefore, by \cite[Theorem~1.1]{KKST}, we have that 
$E_Xu\in N_0^{1,p}(\overline X_\eps\setminus\partial Z,\mu_\beta)$, and so by \cite[Theorem~5.46]{BB}, there exists 
$\{f_k\}_{k\in\N}\subset N^{1,p}(\overline X_\eps,\mu_\beta)$ with $\supt(f_k)\Subset\overline X_\eps\setminus\partial Z$ 
such that $f_k\to E_Xu$ in $N^{1,p}(\overline X_\eps,\mu_\beta)$.  Setting $u_k:=T_Xf_k\in B^\theta_{p,p}(\overline Z,\nu)$, 
where $T_X$ is the trace operator given by Theorem~\ref{thm:HypFillThm}, we have from the construction 
of $T_X$ that $\supt(u_k)\Subset Z$.  Furthermore, by boundedness and linearity of $T_X$, it follows that $u_k\to u$ in 
$B^\theta_{p,p}(\overline Z,\nu)$.
\end{proof}

\section{Traces of Besov spaces on $Z$, at $\partial Z$.}\label{Sec:4}

In this section we complete the set of tools we need in order to study the problem given in Definition~\ref{def:Dirichlet1}, by
constructing the trace operator $T_Z:B^\theta_{p,p}(\overline Z,\nu)\to B^{\theta-\sigma/p}_{p,p}(\partial Z,\pi)$. To do so,
in addition to the assumptions on $Z$ and $\partial Z$ adopted in the previous section, we will also assume that
the measure $\pi=\mathcal{H}_\nu^{-\sigma}$ on $\partial Z$ is $\sigma$-codimensional Ahlfors regular with respect to the measure $\nu$,
as in Definition~\ref{def:codim-regular}, and that $p>\max\{1,\sigma/\theta\}$.

\begin{lem}\label{lem:traceZ}
There is a bounded linear trace operator $T_Z:B^\theta_{p,p}(\overline Z,\nu)\to B^{\theta-\sigma/p}_{p,p}(\partial Z,\pi)$; furthermore,
we also have a bounded linear extension operator $E_Z:B^{\theta-\sigma/p}_{p,p}(\partial Z,\pi)\to B^\theta_{p,p}(\overline Z,\nu)$
such that $T_Z\circ E_Z$ is the identity map on $B^{\theta-\sigma/p}_{p,p}(\partial Z,\pi)$. 
\end{lem}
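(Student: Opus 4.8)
The plan is to obtain $T_Z$ and $E_Z$ by \emph{composition}, transferring the known trace/extension theory between $N^{1,p}(\overline X_\eps,\mu_\beta)$ and $B^\theta_{p,p}(\overline Z,\nu)$ (Theorem~\ref{thm:HypFillThm}) to a second hyperbolic filling, this time built over the compact doubling space $\overline{Z}$ with the weighting chosen so that the boundary measure at $\partial Z$ is exactly $\pi$. More precisely, since $(\overline Z,d,\nu)$ is a compact doubling metric measure space, I would apply the construction of Subsection~\ref{sec:HypFill} to it: fix a parameter $\eps'=\log\alpha'>\log 2$ and form the uniformized hyperbolic filling $(Y_{\eps'},d_{\eps'},\mu_\gamma)$ with $\partial Y_{\eps'}=\overline{Z}$, choosing the exponent $\gamma>0$ so that $\nu$ is $\gamma/\eps'$-codimensional with respect to $\mu_\gamma$ with $\gamma/\eps'=\sigma$; that is, $\gamma=\eps'\sigma$. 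Because $p>\max\{1,\sigma/\theta\}>\sigma=\gamma/\eps'$, part~(4) of Theorem~\ref{thm:HypFillThm} (applied with the roles $Z\rightsquigarrow\overline Z$, $X_\eps\rightsquigarrow Y_{\eps'}$, $\beta\rightsquigarrow\gamma$, and exponent $1-\gamma/(\eps' p)=1-\sigma/p$ in place of $\theta$) gives bounded linear operators between $N^{1,p}(\overline Y_{\eps'},\mu_\gamma)$ and $B^{1-\sigma/p}_{p,p}(\overline Z,\nu)$. But this is not yet what we want, since our Besov target is $B^{\theta-\sigma/p}_{p,p}(\partial Z,\pi)$, not $B^{1-\sigma/p}_{p,p}(\overline Z,\nu)$.

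The key realization is that the trace/extension theory of~\cite{BBS}, as recorded in Theorem~\ref{thm:HypFillThm}, is a relationship between a Newton--Sobolev space on a uniformized hyperbolic filling and a Besov space on its boundary, and it can therefore be iterated. I would run the construction once more, \emph{relative to $\partial Z$}: since $(\partial Z,d,\pi)$ is a compact doubling metric measure space (it is bounded and complete, $\pi$ is doubling on $\partial Z$ by the codimensional Ahlfors regularity of $\pi$ with respect to the doubling measure $\nu$, and it is closed in $\overline Z$, hence compact by~\cite[Lemma~4.1.14]{HKST}), we can build its hyperbolic filling as well. However, the cleaner route — and the one that matches how the paper will have used~\cite{BBS} elsewhere — is to use a \emph{single} filling $Y_{\eps'}$ over $\overline Z$ and exploit that the boundary of $Y_{\eps'}$ is all of $\overline Z=Z\cup\partial Z$, so that $N^{1,p}(\overline Y_{\eps'},\mu_\gamma)$ sees both. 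Concretely, I would first show that the map $u\mapsto E_X u$ (from Theorem~\ref{thm:HypFillThm} applied to the original filling $X_\eps$ over $\overline Z$ with $\beta=\eps(1-\theta)p$) followed by restriction of $T_{Y}$-type traces down to $\partial Z$ produces the desired $T_Z$; that is, I would define $T_Z$ on a function $u\in B^\theta_{p,p}(\overline Z,\nu)$ by taking the Lebesgue-point values of $u$ along $\partial Z$ in the $\nu$-averaged sense of~\eqref{eq:trace-defn}, and then verify the two nontrivial facts: (a) these limits exist for $\pi$-a.e.\ $\zeta\in\partial Z$, and (b) the resulting boundary function lies in $B^{\theta-\sigma/p}_{p,p}(\partial Z,\pi)$ with the asserted norm bound.

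For the boundedness of $T_Z$ I would combine two codimensional relations. From Theorem~\ref{thm:HypFillThm} applied to the filling $X_\eps$ of $\overline Z$ with $\beta=\eps(1-\theta)p$, the space $B^\theta_{p,p}(\overline Z,\nu)$ is (bi-Lipschitz-isomorphically, in energy and in $L^p$) the trace space of $N^{1,p}(\overline X_\eps,\mu_\beta)$, and $\partial Z\subset\partial X_\eps=\overline Z$. Now I would invoke the trace theorem for Newton--Sobolev functions onto a lower-dimensional subset: since $\pi$ is $\sigma$-codimensional Ahlfors regular with respect to $\nu$, and $\nu$ is $\beta/\eps=(1-\theta)p$-codimensional with respect to $\mu_\beta$, the measure $\pi$ is $(\sigma+\beta/\eps)$-codimensional Ahlfors regular with respect to $\mu_\beta$ (this is the transitivity of codimensional Ahlfors regularity, which I would verify directly from~\eqref{eq:codim-def} using that $r^{-\sigma}r^{-\beta/\eps}=r^{-(\sigma+\beta/\eps)}$ and the doubling of all measures involved). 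Because $p>\sigma+\beta/\eps$ — which is exactly the hypothesis $p>\sigma/\theta$ rewritten, since $\sigma+(1-\theta)p<p\iff\sigma<\theta p$ — the trace theory of~\cite{BBS}, or more directly the trace theorem for Newton--Sobolev spaces onto codimensional Ahlfors-regular subsets as in~\cite{BBS, GKS}, yields a bounded linear trace operator $N^{1,p}(\overline X_\eps,\mu_\beta)\to B^{1-(\sigma+\beta/\eps)/p}_{p,p}(\partial Z,\pi)$, and a glance at the exponent gives $1-(\sigma+\beta/\eps)/p=1-\sigma/p-(1-\theta)=\theta-\sigma/p$, precisely as claimed. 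Pre-composing with $E_X:B^\theta_{p,p}(\overline Z,\nu)\to N^{1,p}(\overline X_\eps,\mu_\beta)$ gives $T_Z$; for the extension $E_Z$ I would post-compose the corresponding Besov-to-Newton extension onto $\partial Z$ with $T_X:N^{1,p}(\overline X_\eps,\mu_\beta)\to B^\theta_{p,p}(\overline Z,\nu)$, and the identity $T_Z\circ E_Z=\mathrm{Id}$ follows from the analogous identity at the level of the fillings together with the fact that $T_X\circ E_X=\mathrm{Id}$. The characterization~\eqref{eq:trace-defn} of $T_Zu$ via $\nu$-averages over balls $B(\zeta,r)$ then comes from comparing the $\mu_\beta$-average formula~\eqref{eq:Tx-def} for $T_X$ with $\nu$-averages, exactly as in the chain-of-balls computation already carried out in the proof of Proposition~\ref{prop:2}: one shows $\fint_{B(\zeta,r)}|E_X u|\,d\mu_\beta\simeq\fint_{B_Z(\zeta,2r)}|u|\,d\nu$ up to a controlled error.

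The main obstacle I expect is \textbf{bookkeeping the two codimensional exponents consistently} and making sure the Besov target exponent comes out as $\theta-\sigma/p$ and not something like $1-\sigma/p$ or $(1-\theta)+\ldots$; the arithmetic $\beta/\eps=(1-\theta)p$ together with $1-(\sigma+\beta/\eps)/p=\theta-\sigma/p$ is the crux, and one must be careful that the hypothesis $p>\max\{1,\sigma/\theta\}$ is exactly what is needed to keep $\sigma+\beta/\eps<p$ so that the Newton--Sobolev trace onto $\partial Z$ is well-defined and bounded. A secondary technical point is transitivity of codimensional Ahlfors regularity — that $\pi$ being $\sigma$-codimensional Ahlfors regular w.r.t.\ $\nu$ and $\nu$ being $(\beta/\eps)$-codimensional w.r.t.\ $\mu_\beta$ implies $\pi$ is $(\sigma+\beta/\eps)$-codimensional Ahlfors regular w.r.t.\ $\mu_\beta$ — but this is a short computation from the definitions and the doubling property, of the same flavor as the lemma already proved in Subsection on metric and measure-theoretic notions. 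Everything else (boundedness and linearity of the composite operators, the identity $T_Z\circ E_Z=\mathrm{Id}$, and the Lebesgue-point description~\eqref{eq:trace-defn}) is then a matter of assembling results already available from~\cite{BBS}, Theorem~\ref{thm:HypFillThm}, and the estimates appearing in Propositions~\ref{prop:1} and~\ref{prop:2}.
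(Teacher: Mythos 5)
Your construction is essentially the paper's: with $\beta=\eps(1-\theta)p$ you use the single filling $X_\eps$ of $\overline Z$, note that $\pi$ is $(\sigma+\beta/\eps)$-codimensional Ahlfors regular with respect to $\mu_\beta$, invoke the trace/extension theory for uniform domains with codimensionally regular boundary measures (the paper cites Mal\'y and Gibara--Shanmugalingam for the operators $T_{X,\partial}$, $E_{X,\partial}$ on $\partial Z$), and set $T_Z=T_{X,\partial}\circ E_X$ and $E_Z=T_X\circ E_{X,\partial}$; the exponent arithmetic $1-(\sigma+\beta/\eps)/p=\theta-\sigma/p$ and the observation that $p>\sigma/\theta$ is exactly $\sigma+\beta/\eps<p$ are precisely the paper's bookkeeping, and the verification of the Lebesgue-point description \eqref{eq:trace-defn} via the chain-of-balls estimate of Proposition~\ref{prop:1} together with \cite[Lemma~3.10]{GKS} is also how the paper proceeds. (Your opening detour through a second hyperbolic filling over $\overline Z$ with $\gamma=\eps'\sigma$ is abandoned in favor of this route, so it does not affect the comparison.)

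There is, however, one step where your justification as written does not go through: the identity $T_Z\circ E_Z=\mathrm{Id}$. You claim it ``follows from the analogous identity at the level of the fillings together with $T_X\circ E_X=\mathrm{Id}$,'' but the composition is
\[
T_Z\circ E_Z \;=\; T_{X,\partial}\circ\bigl(E_X\circ T_X\bigr)\circ E_{X,\partial},
\]
and the operator in the middle is $E_X\circ T_X$, not $T_X\circ E_X$; $E_X\circ T_X$ is \emph{not} the identity on $N^{1,p}(\overline X_\eps,\mu_\beta)$, so no formal cancellation is available. The paper closes this gap by a density argument: Lipschitz functions are dense in $B^{\theta-\sigma/p}_{p,p}(\partial Z,\pi)$, the extension operators $E_X$ and $E_{X,\partial}$ send Lipschitz functions to Lipschitz functions on $\overline X_\eps$, and both trace operators act on Lipschitz functions as restrictions; hence $T_Z\circ E_Z(f)=f$ for Lipschitz $f$, and bounded linearity of $T_Z\circ E_Z$ extends the identity to all of $B^{\theta-\sigma/p}_{p,p}(\partial Z,\pi)$. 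You should incorporate this (or an equivalent compatibility statement such as $T_{X,\partial}=T_Z\circ T_X$, which the paper only derives \emph{after} the lemma, in Remark~\ref{rem:traces-galore}, and so cannot be assumed here) to make the argument complete; with that addition your proof matches the paper's.
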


\begin{proof}
Fix $\alpha>2$, $\eps=\log\alpha$, and $\beta=\eps p(1-\theta)$ in the construction of the uniform domain $(X_\eps,d_\eps,\mu_\beta)$ for which $\overline Z=\partial X_\eps$.  Since $\overline{X}_\eps\setminus\partial Z$ is a uniform domain and $\pi$ is $(\sigma+\tfrac{\beta}{\eps})$-codimensional Ahlfors regular
with respect to the measure $\mu_\beta$, it follows from~\cite{Maly, GS} that there is an extension operator
$E_{X,\partial}:B^{\theta-\sigma/p}_{p,p}(\partial Z,\pi)\to N^{1,p}(\overline{X}_\eps,\mu_\beta)$ and a trace operator
$T_{X,\partial}:N^{1,p}(\overline{X}_\eps,\mu_\beta)\to B^{\theta-\sigma/p}_{p,p}(\partial Z,\pi)$, both bounded and linear,
with $T_{X,\partial}\circ E_{X,\partial}$ the identity map on $B^{\theta-\sigma/p}_{p,p}(\partial Z,\pi)$. 
As by construction $\nu$ is $\beta/\eps$-codimensional Ahlfors regular with respect to $\mu_\beta$, and $X_\eps$ is a 
uniform domain, we also have similar trace and extension operators
$T_{X}:N^{1,p}(\overline{X}_\eps,\mu_\beta)\to B^{\theta}_{p,p}(\overline Z,\nu)$ and 
$E_{X}:B^\theta_{p,p}(\overline Z,\nu)\to N^{1,p}(\overline{X}_\eps,\mu_\beta)$, see Theorem~\ref{thm:HypFillThm}.
Setting $E_Z=T_{X}\circ E_{X,\partial}$ yields the desired conclusion once we have constructed the trace operator $T_Z$. 
We will consider as a candidate for $T_Z$ the operator $T=T_{X,\partial}\circ E_X$. 
In order for $T$ to be a suitable trace operator, we need to verify that 
\begin{enumerate}
\item[{\bf{(a)}}] $T\circ E_Z$ is the identity map on $B^{\theta-\sigma/p}_{p,p}(\partial Z,\pi)$,
\item[{\bf{(b)}}] for each $f\in B^\theta_{p,p}(\overline Z,\nu)$, for $\pi$-a.e.~$\xi\in\partial Z$ we have that
\[
\lim_{r\to 0^+}\fint_{B(\xi,r)}|f-Tf(\xi)|\, d\nu=0.
\]
\end{enumerate}

From~\cite{BBS} we know that the class of Lipschitz functions on $Z$ and $\partial Z$ are dense in $B^\theta_{p,p}(\overline Z,\nu)$ and 
$B^{\theta-\sigma/p}_{p,p}(\partial Z,\pi)$ respectively. Lipschitz functions on $Z$ and $\partial Z$ are extended as Lipschitz
functions in $N^{1,p}(\oXeps,\mu_\beta)$ by the respective extension operators $E_X$ and $E_{X,\partial}$, and the trace operators
act on Lipschitz functions on $\overline{X}_\eps$ as restrictions, that is, if $f$ is a Lipschitz function on $X_\eps$, then 
it has a Lipschitz extension (also denoted $f$) to $\overline{X}_\eps$, and then
it can be seen from the construction of the trace operators that $T_Xf=f\vert_Z$ and $T_{X,\partial}f=f\vert_{\partial X}$.
Thus for Lipschitz functions $f\in B^\theta_{p,p}(\overline Z,\nu)$ we have that $Tf=f\vert_{\partial Z}$, and so
$T\circ E_Z(f)=f$ for Lipschitz $f\in B^{\theta-\sigma/p}_{p,p}(\partial Z,\pi)$; by the bounded linearity property of $T\circ E_Z$ it now
follows that $T\circ E_Z$ is the identity map on $B^{\theta-\sigma/p}_{p,p}(\partial Z,\pi)$ as required in~(a).

Fix $f\in B^\theta_{p,p}(\overline Z,\nu)$, and let $\xi\in\partial Z$ be a Lebesgue point of $E_Xf$ with respect to the measure $\mu_\beta$.
As noted at the end of the proof of Proposition~\ref{prop:1}, $\pi$-almost every point is such a point. This means that
\[
\lim_{r\to 0^+}\fint_{B(\xi,r)}|E_Xf-T_X\circ E_Xf(\xi)|\, d\mu_\beta=0.
\]
Moreover, we also require $\xi$ to satisfy the condition that
\begin{equation}\label{eq:Max1}
\lim_{r\to 0^+} r^p\, \fint_{B(\xi,Cr)}g_{E_Xf}^p\, d\mu_\beta=0;
\end{equation}
and we also know from \cite[Lemma~3.10]{GKS} that $\pi$-almost every $\xi\in\partial Z$ is such a point.
For the rest of this proof we will also denote $E_Xf$ by $f$ (note that $T_X(E_Xf)=f$). Fix $r>0$; then for Lebesgue points $\zeta\in B(\xi,r)\cap Z$, 
as in the proof of Proposition~\ref{prop:1} we see that with a fixed choice of $\delta$ with $0<\delta<p-\beta/\eps$,
\[
|f(\zeta)-f(\xi)|^p\lesssim d(\zeta,\xi)^{p-\beta/\eps-\delta}\, \sum_{k\in\Z}\frac{r_k^{\beta/\eps+\delta}}{\mu(B_k)}\, \int_{4B_k}g_f^p\, d\mu_\beta,
\]
with $r_k\approx d(x_k,\xi)$ when $k\ge 0$ and $x_k$ is the center of the ball $B_k$, and $r_k\approx d(x_k,\zeta)$ when $k<0$.
Proceeding further as in the proof of Proposition~\ref{prop:1}, we obtain
\[
\fint_{B(\xi,r)}|f(\zeta)-f(\xi)|^p\, d\nu(\zeta)\lesssim r^p\, \fint_{B(\xi,Cr)}g_f^p\, d\mu_\beta.
\]
Now the condition~(b) follows from~\eqref{eq:Max1}.
\end{proof}

\begin{remark}\label{rem:traces-galore}
With the above constructions in place, we have $T_{X,\partial}:N^{1,p}(\overline{X}_\eps,\mu_\beta)\to B^{\theta-\sigma/p}_{p,p}(\partial Z,\pi)$
as a bounded linear trace operator. On the other hand, we also have
$T_Z\circ T_X:N^{1,p}(\overline{X}_\eps,\mu_\beta)\to B^{\theta-\sigma/p}_{p,p}(\partial Z,\pi)$ as a bounded linear operator. For Lipschitz
functions $w\in N^{1,p}(\overline{X}_\eps,\mu_\beta)$, we know that 
$T_Z\circ T_X(w)=T_Z(w\vert_Z)$, with $w\vert_Z$ also Lipschitz continuous on $Z$; hence it has a unique Lipschitz extension to $\partial Z$,
given by $w\vert_{\partial Z}$. Thus $T_Z\circ T_X(w)=w\vert_{\partial Z}=T_{X,\partial}(w)$. Thus the two bounded linear operators agree on
the subclass of all Lipschitz functions in $N^{1,p}(\overline{X}_\eps,\mu_\beta)$. As $(\overline{X}_\eps,d_\eps,\mu_\beta)$ is doubling and 
supports a $1$-Poincar\'e inequality, it follows that Lipschitz functions form a dense subclass of $N^{1,p}(\overline{X}_\eps,\mu_\beta)$
(see for instance~\cite[Theorem~8.2.1]{HKST});
hence on $N^{1,p}(\overline{X}_\eps,\mu_\beta)$ we have that $T_Z\circ T_X=T_{X,\partial}$. 
\end{remark}

\section{Existence of solutions to the inhomogeneous Dirichlet problem}

The goal of this section is to establish the existence of solutions to the problem given in Definition~\ref{def:Dirichlet1}, and to show 
that such solutions are unique. We also prove a comparison principle for solutions, provided that the inhomogeneity data $G$ remains the same.

We continue to adopt the standing assumptions from the previous section here; in particular, $\pi=\Ha^{-\sigma}_\nu$ is $\sigma$-codimensional with respect  to $\nu$, and
$p>\max\{1,\sigma/\theta\}$.
We let $G\in L^{p'}(Z,\nu)$ and $f\in B^{\theta-\sigma/p}_{p,p}(\partial Z,\pi)$ and set the class
\[
\A(f):=\{u\in B^\theta_{p,p}(\overline Z,\nu):T_Zu=f\ \ \ \pi\text{-a.e.\ on }\partial Z\}.
\]
For each $u\in B^\theta_{p,p}(\overline Z,\nu)$, we consider the functional $I_G$ as defined in Remark~\ref{rem:Euler-Lagrange}, 
and then consider the minimization problem 
\begin{equation}\label{eq:Inhom Minimization}
\min\left\{I_G(u):u\in\A(f)\right\},
\end{equation}   
which corresponds to the inhomogeneous Dirichlet problem given in Definition~\ref{def:Dirichlet1}.

\begin{thm}\label{thm:Inhom Existence}
Let $G\in L^{p'}(Z,\nu)$ and $f\in B^{\theta-\sigma/p}_{p,p}(\partial Z,\pi)$.  Then there exists $u\in\A(f)$ such that 
\[
I_G(u)=\inf_{v\in\A(f)}I_G(v).
\]
Moreover, if $w\in\A(f)$ is another such minimizer, then $w=u$ on $Z$.
\end{thm}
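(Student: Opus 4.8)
The plan is to run the direct method of the calculus of variations in the space $\A(f)$, using the structure of the energy $I_G$ together with the trace theory from Section~4 and the hyperbolic filling theory from Section~2. First I would verify that $\A(f)$ is nonempty: since $f\in B^{\theta-\sigma/p}_{p,p}(\partial Z,\pi)$, the extension operator $E_Z$ from Lemma~\ref{lem:traceZ} produces a function $E_Zf\in B^\theta_{p,p}(\overline Z,\nu)$ with $T_Z(E_Zf)=f$, so $E_Zf\in\A(f)$ and the infimum in \eqref{eq:Inhom Minimization} is taken over a nonempty set. Next I would show $I_G$ is bounded below on $\A(f)$ and that minimizing sequences are bounded in $B^\theta_{p,p}(\overline Z,\nu)$. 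Writing $\overline v$ for the $p$-harmonic extension of $v$, the first term of $I_G(v)$ is $\|\nabla\overline v\|_{L^p(X_\eps,\mu_\beta)}^p$, which by property~(5) of $\mathcal{E}_T$ in Definition~\ref{def:ET} is comparable to $\|v\|_{B^\theta_{p,p}(\overline Z,\nu)}^p$ (the Besov seminorm). For the linear term, I split $v = (v - E_Zf) + E_Zf$ where $w:=v-E_Zf$ has zero trace; then $\int_Z vG\,d\nu = \int_Z wG\,d\nu + \int_Z (E_Zf)G\,d\nu$. The last integral is a fixed constant bounded by $\|E_Zf\|_{L^{p'}}\,$... wait, rather by $\|G\|_{L^{p'}}\|E_Zf\|_{L^p}$; and $\int_Z wG\,d\nu \le \|G\|_{L^{p'}}\|w\|_{L^p(Z,\nu)}$. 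The key point, to control the full $B^\theta_{p,p}$-norm (not just the seminorm) by the energy, is a Maz'ya-type/Poincaré inequality for functions with zero trace: because $E_Xw\in N^{1,p}_0(\overline X_\eps\setminus\partial Z,\mu_\beta)$ by Proposition~\ref{prop:1}/\ref{prop:2} and $\partial Z$ has positive $\BCap^{\oXeps}_p$-capacity (it carries the measure $\pi$, which by the capacity comparison \eqref{eq:CapComparison} and the codimension relation cannot be null), Theorem~\ref{thm:Mazya} applied on $\overline X_\eps$ gives $\|E_Xw\|_{L^p(\overline X_\eps,\mu_\beta)}\lesssim \|\nabla E_Xw\|_{L^p(\overline X_\eps,\mu_\beta)}$, and hence via the trace bound $\|w\|_{L^p(Z,\nu)}\lesssim \|w\|_{B^\theta_{p,p}(\overline Z,\nu),\text{seminorm}}$. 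Combining, $I_G(v)\ge c\,\|w\|_{B^\theta}^p - C\|G\|_{p'}\|w\|_{B^\theta} - C'$, which is coercive in $\|w\|_{B^\theta}$ and bounded below; so a minimizing sequence $(v_j)$ has $(w_j)=(v_j-E_Zf)$ bounded in $B^\theta_{p,p}(\overline Z,\nu)$, hence $(v_j)$ is bounded there.

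Then I would extract a weakly convergent subsequence. Identifying $B^\theta_{p,p}(\overline Z,\nu)$ with (a subspace of) $L^p(\overline Z,\nu)$ together with the reflexive energy functional — or more cleanly, pushing everything up to the filling: $E_X v_j$ is bounded in the reflexive space $N^{1,p}(\overline X_\eps,\mu_\beta)$, so a subsequence converges weakly to some $\overline u\in N^{1,p}(\overline X_\eps,\mu_\beta)$, and by Mazur's lemma / weak lower semicontinuity of the $p$-Dirichlet energy (standard for Newton--Sobolev spaces, e.g.\ \cite{BB}), $\|\nabla\overline u\|_{L^p}^p\le \liminf_j\|\nabla\overline{v_j}\|_{L^p}^p$ where $\overline{v_j}$ are the $p$-harmonic extensions (and $\|\nabla\overline{v_j}\|\le\|\nabla E_Xv_j\|$ since $p$-harmonic extensions minimize energy). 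I would also need $L^p$-convergence of $v_j$ (up to a subsequence) on $Z$, to pass to the limit in $\int_Z v_jG\,d\nu$; this follows from the compactness of the trace embedding or simply from the boundedness in $B^\theta_{p,p}$ plus a Rellich-type argument, using that $v_j = T_X(E_Xv_j)$ and $E_Xv_j\to E_Xu$ in $L^p(\overline X_\eps,\mu_\beta)$ by compactness of the embedding $N^{1,p}\hookrightarrow L^p$ on the bounded doubling-Poincaré space $\overline X_\eps$, together with continuity of $T_X$. Set $u:=T_X\overline u\in B^\theta_{p,p}(\overline Z,\nu)$; then $T_Zu = T_Z(T_X\overline u) = T_{X,\partial}\overline u$ by Remark~\ref{rem:traces-galore}, and since $T_{X,\partial}$ is weakly continuous (being bounded linear) and $T_{X,\partial}(E_Xv_j)=T_Z v_j = f$ for all $j$, we get $T_Zu=f$, i.e.\ $u\in\A(f)$. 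Lower semicontinuity then yields $I_G(u)\le \liminf_j I_G(v_j) = \inf_{\A(f)}I_G$, so $u$ is a minimizer.

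For uniqueness, suppose $u,w\in\A(f)$ are both minimizers. Then $u-w$ has zero trace, so it is an admissible test function in the Euler--Lagrange equation (Remark~\ref{rem:Euler-Lagrange}), and both satisfy $\mathcal{E}_T(u,v)=\int_Z vG\,d\nu=\mathcal{E}_T(w,v)$ for all such $v$. Taking $v=u-w$ and subtracting gives
\[
\int_{X_\eps}\bigl(|\nabla\overline u|^{p-2}\nabla\overline u - |\nabla\overline w|^{p-2}\nabla\overline w\bigr)\cdot(\nabla\overline u-\nabla\overline w)\,d\mu_\beta = 0,
\]
where I used that $\overline{u-w}$ differs from $\overline u - \overline w$ by a function of zero trace that is orthogonal to the $p$-harmonic fields of both $u$ and $w$ (Remark~\ref{rem:Et-2nd-trace}), so effectively the test function can be taken to be $\overline u - \overline w$. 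By the standard strict monotonicity inequality for the $p$-Laplacian vector field, the integrand is nonnegative and vanishes only where $\nabla\overline u = \nabla\overline w$; hence $\nabla(\overline u-\overline w)=0$ $\mu_\beta$-a.e.\ on $X_\eps$, so $\overline u - \overline w$ is constant on $X_\eps$ (which is connected, being a uniform domain). That constant must be zero because the trace of $\overline u - \overline w$ on $\partial X_\eps=\overline Z$ is $u - w$, whose further trace on $\partial Z$ is $f-f=0$, and by the Maz'ya/capacity argument above a constant with zero trace on the positive-capacity set $\partial Z$ is zero. Therefore $\overline u=\overline w$ on $X_\eps$, and taking traces $u = T_X\overline u = T_X\overline w = w$ $\nu$-a.e.\ on $Z$.

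The main obstacle I expect is the coercivity/lower-bound step: specifically, establishing the Poincaré-type estimate $\|w\|_{L^p(Z,\nu)}\lesssim \|w\|_{B^\theta_{p,p},\text{seminorm}}$ for functions $w$ with zero trace on $\partial Z$. This requires carefully transferring the zero-trace condition up to $\overline X_\eps$ (which is exactly what Propositions~\ref{prop:1} and \ref{prop:2} are for), invoking the Maz'ya inequality Theorem~\ref{thm:Mazya} on $\overline X_\eps$ with $S\supset\partial Z$ of positive $\BCap^{\oXeps}_p$-capacity, and then converting back via the trace bound of Theorem~\ref{thm:HypFillThm}; keeping track of which capacity/measure ($\pi$ vs.\ $\Ha^{-\sigma}_\nu$ vs.\ $\Ha^{-(\sigma+\beta/\eps)}_{\mu_\beta}$ vs.\ $\BCap^{\oXeps}_p$) is relevant at each step is where the bookkeeping gets delicate. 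The weak lower semicontinuity and the strict monotonicity steps, by contrast, are essentially standard once everything is pushed to the Newton--Sobolev space on the filling.
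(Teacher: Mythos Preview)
Your proposal is correct and follows essentially the same route as the paper: nonemptiness via $E_Zf$, coercivity via the Maz'ya inequality on $\overline X_\eps$ (Theorem~\ref{thm:Mazya}) applied to the zero-trace part $v-E_Zf$, reflexivity of $N^{1,p}(\overline X_\eps,\mu_\beta)$ to extract a weak limit, lower semicontinuity of the Dirichlet energy, verification that the limit lies in $\A(f)$ via boundedness of the trace operators, and strict monotonicity of the $p$-Laplacian vector field for uniqueness. The only tactical difference is that the paper upgrades to strong $N^{1,p}$-convergence of Mazur convex combinations before passing to the limit, whereas you work directly with the weak limit; note that for the linear term you do not actually need the Rellich compactness you invoke---weak convergence of $E_Xv_j$ in $N^{1,p}$ already gives weak convergence of $v_j=T_X(E_Xv_j)$ in $L^p(Z,\nu)$ (as $T_X$ is bounded linear), and that suffices since $G\in L^{p'}$.
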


\begin{remark}\label{rem:p-large-vs-unique}
Recall from our standing assumptions for this section that $p>\sigma/\theta$. Under this requirement on $p$,
we know from~\cite[Proposition~3.11]{GKS} and~\eqref{eq:CapComparison}
that any measurable set $A\subset\partial Z$ with $\pi(A)>0$ must have 
$\BCap_{\theta,p}^{\overline Z}(A)>0$. If $p$ does not satisfy this condition, then
$\BCap_{\theta,p}^{\overline Z}(\partial Z)=0$, and in this case, the above uniqueness will have
to be replaced with the statement that $w=u+c$ for some constant $c$, and in addition, we would
have to require that $\int_ZG\, d\nu=0$.
\end{remark}

\begin{proof}[Proof of Theorem~\ref{thm:Inhom Existence}]
Since $E_Zf\in\A(f)$, we have that 
\[
I:=\inf_{v\in\A(f)}I_G(v)<\infty,
\]
and so we choose $u_k\in\A(f)$ such that 
\[
I_G(E_Zf)\ge I_G(u_k)\to I,
\]
as $k\to\infty$.  By a slight abuse of notation, we will also denote by $f$ the extension $E_X\circ E_Zf\in N^{1,p}(\overline X_\eps,\mu_\beta)$ in what follows.  By the H\"older inequality and Theorem~\ref{thm:HypFillThm}, we have that 
\begin{align*}
p\int_Zu_kG\,d\nu\le p\|u_k\|_{L^p(Z,\nu)}\ &\|G\|_{L^{p'}(Z,\nu)}\lesssim\left(\|\overline{u}_k\|_{L^p(\overline X_\eps,\mu_\beta)}
 +\|\nabla{\overline u}_k\|_{L^p(\overline X_\eps,\mu_\beta)}\right)\|G\|_{L^{p'}(Z,\nu)}\\
	&\le\left(\|\overline{u}_k-f\|_{L^p(\overline X_\eps,\mu_\beta)}+\|f\|_{L^p(\overline X_\eps,\mu_\beta)}
	+\|\nabla \overline u_k\|_{L^p(\overline X_\eps,\mu_\beta)}\right)\|G\|_{L^{p'}(Z,\nu)}.
\end{align*}
Since $u_k\in\A(f)$ and since $\BCap^{\overline Z}_{\theta,p}\simeq\BCap^{\oXeps}_p$ on $\overline Z$ by \eqref{eq:CapComparison}, it follows that $\overline u_k-f=0$ $\BCap^{\oXeps}_p$-q.e.\ on $\partial Z$.  Since $\BCap_p^{\oXeps}(\partial Z)>0$,  we then have from the Maz'ya-type inequality Theorem~\ref{thm:Mazya} that
\begin{equation}\label{eq:Mazya bound}
\|\overline u_k-f\|_{L^p(\overline X_\eps,\mu_\beta)}\lesssim\|\nabla (u_k-f)\|_{L^p(\overline X_\eps,\mu_\beta)}
\le\|\nabla \overline u_k\|_{L^p(\overline X_\eps,\mu_\beta)}+\|\nabla f\|_{L^p(\overline X_\eps,\mu_\beta)}.
\end{equation}
Note that the comparison constant above depends on $p$, $\theta$, the doubling constant of $\nu$, as well as $\diam(Z)$, $\nu(Z)$, and $\BCap_{\theta,p}^{\overline Z}(\partial Z)$, but is independent of $k$. 

Substituting this into the previous expression, we obtain 
\begin{align*}
p\int_Zu_kG\,d\nu&\le C\left(\|\nabla \overline u_k\|_{L^p(\overline X_\eps,\mu_\beta)}+\|f\|_{L^p(\overline X_\eps,\mu_\beta)}
+\|\nabla f\|_{L^p(\overline X_\eps,\mu_\beta)}\right)\|G\|_{L^{p'}(Z,\nu)}\\
	&=:C_G\|\nabla \overline u_k\|_{L^p(\overline X_\eps,\mu_\beta)}+C_{f,G}.
\end{align*}
Hence, we have that 
\begin{align*}
I_G(E_Zf)&\ge I_{G}(u_k)=\|\nabla \overline u_k\|_{L^p(\overline X_\eps,\mu_\beta)}^p-p\int_{Z}u_kG\,d\nu\\
	&\ge\|\nabla \overline u_k\|_{L^p(\overline X_\eps,\mu_\beta)}^p-\left(C_G\|\nabla \overline u_k\|_{L^p(\overline X_\eps,\mu_\beta)}+C_{f,G}\right).
\end{align*}
Therefore, it follows that 
\[
\|\nabla \overline u_k\|_{L^p(\overline X_\eps,\mu_\beta)}(\|\nabla \overline u_k\|_{L^p(\overline X_\eps,\mu_\beta)}^{p-1}-C_G)\le I_G(E_Zf)+C_{f,G},
\]
where the right hand side is finite and independent of $k$.  Thus, we see that the sequence 
$\{|\nabla \overline u_k|\}_k$ is bounded in $L^p(\overline X_\eps,\mu_\beta)$, which then implies that the sequence $\{\overline u_k\}_k$ is also bounded in 
$L^p(\overline X_\eps,\mu_\beta)$ by \eqref{eq:Mazya bound}.  As such, $\{\overline u_k\}_k$ is bounded in $N^{1,p}(\overline X_\eps,\mu_\beta)$.

Since $N^{1,p}(\overline X_\eps,\mu_\beta)$ is reflexive (see for instance~\cite[Theorem~13.5.7]{HKST}), 
there exists $w\in N^{1,p}(\overline X_\eps,\mu_\beta)$ and a subsequence $\{\overline u_k\}_k$, not relabeled, such that $\overline u_k\to w$ weakly in $N^{1,p}(\overline X_\eps,\mu_\beta)$.  Therefore, by Mazur's lemma, there exists a convex combination sequence
\[
w_k:=\sum_{i=k}^{N(k)}\lambda_{k,i}\overline u_i
\]
such that $w_k\to w$ in $N^{1,p}(\overline X_\eps,\mu_\beta)$.   Passing to a subsequence if necessary, it follows from \cite[Proposition~2.3, Corollary~6.3]{BB} that 
\begin{align}\label{eq:p-energy LSC}
\int_{\overline X_\eps}|\nabla w|^p\,d\mu_\beta\le\liminf_{k\to\infty}\int_{\overline X_\eps}|\nabla w_k|^p\,d\mu_\beta.
\end{align}
Furthermore, boundedness and linearity of the trace operator $T_X$ and the H\"older inequality implies that 
\begin{align}\label{eq:trace limit}
\left|\int_Z T_Xw_kG\,d\nu-\int_Z T_XwG\,d\nu\right|&\le\|T_X(w_k-w)\|_{L^p(Z,\nu)}\|G\|_{L^{p'}(Z,\nu)}\nonumber\\
&\lesssim\|w_k-w\|_{N^{1,p}(\overline X_\eps,\mu_\beta)}\|G\|_{L^{p'}(Z,\nu)}\to 0
\end{align}
as $k\to\infty$.  By convexity of the functional $v\mapsto \int_{\overline X_\eps} |\nabla v|^pd\mu$ and linearity of $T_X$, we also have that 
\begin{align}\label{eq:w_k energy}
\int_{\overline X_\eps}|\nabla w_k|^p\,d\mu_\beta-p\int_{Z}T_Xw_k\, G\,d\nu
\le\sum_{i=k}^{N(k)}\lambda_{k,i}\left(\int_{\overline X_\eps} |\nabla \overline u_i|^p\,d\mu-p\int_Z u_iG\,d\nu\right)
=\sum_{i=k}^{N(k)}\lambda_{k,i}I_G(u_i)\to I
\end{align}
as $k\to\infty$. 

We claim that $T_Z\circ T_Xw=f$.  By the triangle inequality and linearity of the trace operators, we have that 
\begin{align*}
\|T_Z\circ T_Xw-f\|_{B^{\theta-\sigma/p}_{p,p}(\partial Z,\pi)}\le\|T_Z\circ T_X(w-w_k)\|_{B^{\theta-\sigma/p}_{p,p}(\partial Z,\pi)}+\|T_Z\circ T_Xw_k-f\|_{B^{\theta-\sigma/p}_{p,p}(\partial Z,\pi)}.
\end{align*} 
Boundedness of the trace operators gives us that 
\[
\|T_Z\circ T_X(w-w_k)\|_{B^{\theta-\sigma/p}_{p,p}(\partial Z,\pi)}\lesssim\|T_X(w-w_k)\|_{B^\theta_{p,p}(\overline Z,\nu)}
\lesssim\|w-w_k\|_{N^{1,p}(\overline X_\eps,\mu_\beta)}\to 0
\]
as $k\to\infty$.  Recall that $T_Zu_i=f$ as $u_i\in\A(f)$, and so linearity of the trace operators gives us
\[
\|T_Z\circ T_Xw_k-f\|_{B^{\theta-\sigma/p}_{p,p}(\partial Z,\pi)}
=\left\|\sum_{i=k}^{N(k)}\lambda_{k,i}T_Zu_i-f\right\|_{B^{\theta-\sigma/p}_{p,p}(\partial Z,\pi)}=0.
\]
This proves the claim, and so we have that $T_Xw\in\A(f)$.

Since $\overline{T_Xw}$ is the $p$-harmonic extension of $T_Xw$ to $\overline X_\eps$, it follows that 
\[
\int_{\overline X_\eps} |\nabla \overline{T_Xw}|^p\,d\mu_\beta\le\int_{\overline X_\eps}|\nabla w|^p\,d\mu_\beta.
\]
As $T_Xw\in\A(f)$, we then have from \eqref{eq:p-energy LSC}, \eqref{eq:trace limit}, and \eqref{eq:w_k energy} that
\begin{align*}
I\le I_G(T_Xw)=\int_{\overline X_\eps}|\nabla \overline{T_Xw}|^p\,d\mu_\beta-p\int_{Z}T_Xw&G\,d\nu\le\int_{\overline X_\eps}|\nabla w|^p\,d\mu_\beta-p\int_{Z}T_XwG\,d\nu\\
	&\le\liminf_{k\to\infty}\left(\int_{\overline X_\eps}|\nabla w_k|^p\,d\mu_\beta-p\int_{Z}T_Xw_k\,G\,d\nu\right)\le I.
\end{align*}
Therefore, setting $u:=T_Xw$, we have that $I_G(u)=I$, completing the proof of the existence.

To complete the proof of the theorem, we now suppose that $w\in\A(f)$ is another minimizer of $I_G$. Then we have
that $I_G(u)=I_G(w)$, and so
\[
\mathcal{E}_T(u,u-w)=\int_Z G(u-w)\, d\nu=\mathcal{E}_T(w,u-w).
\]
It follows that
\[
\mathcal{E}_T(u,u-w)-\mathcal{E}_T(w,u-w)=0.
\]
By the definition of $\mathcal{E}_T$, we have that
\[
\int_{X_\eps}\langle |\nabla\overline u|^{p-2}\nabla\overline u-|\nabla\overline w|^{p-2}\nabla\overline w,\ \nabla(\overline u-\overline w)\rangle\, d\mu_\beta=0.
\]
It follows that for $\mu_\beta$-a.e.~point in $\overline X_\eps$ we have $\nabla\overline u-\nabla\overline w=0$, and so
$u-w$ is constant on $X_\eps$ by the Poincar\'e inequality on $(X,d_\eps,\mu_\beta)$. As $T_Zu=T_Zw=f$, it follows that
$u=w$ $\nu$-a.e.~in $Z$.
\end{proof}

\begin{prop}[Comparison principle]\label{prop:compare}
Let $G\in L^{p'}(Z,\nu)$, and let $f_1,f_2\in B^{\theta-\sigma/p}_{p,p}(\partial Z,\pi)$ be such that $f_1\le f_2$ $\pi$-a.e.\ on $\partial Z$.  For $i=1,2$, let $u_i\in B^\theta_{p,p}(\overline Z,\nu)$ be a minimizer for \eqref{eq:Inhom Minimization} with boundary data $f_i$.  Then $u_1\le u_2$ $\nu$-a.e.\ in $Z$.  
\end{prop}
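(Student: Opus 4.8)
The plan is to test the weak Euler--Lagrange equation satisfied by each $u_i$ against the common competitor $v:=(u_1-u_2)_+=\max\{u_1-u_2,0\}$ and then to exploit the strict monotonicity of the map $\xi\mapsto|\xi|^{p-2}\xi$ on the hyperbolic filling. First I would check that $v$ is an admissible test function. Since $u_1-u_2\in B^\theta_{p,p}(\overline Z,\nu)$ and $t\mapsto t_+$ is $1$-Lipschitz, post-composition does not increase the Besov seminorm, so $v\in B^\theta_{p,p}(\overline Z,\nu)$. Using the Lebesgue-point property of $T_Z$ from Lemma~\ref{lem:traceZ}, for $\pi$-a.e.\ $\zeta\in\partial Z$ we have $\fint_{B(\zeta,r)}|u_i-f_i(\zeta)|\,d\nu\to0$ as $r\to0^+$ for $i=1,2$, hence $\fint_{B(\zeta,r)}|v-(f_1(\zeta)-f_2(\zeta))_+|\,d\nu\to0$, so that $T_Zv=(f_1-f_2)_+=0$ $\pi$-a.e.\ on $\partial Z$ because $f_1\le f_2$. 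Thus $v$ is a legitimate test function for both $u_1$ and $u_2$ in the weak formulation of Definition~\ref{def:Dirichlet1} (equivalently, Remark~\ref{rem:Euler-Lagrange}), and subtracting the two resulting identities gives
\[
0=\mathcal{E}_T(u_1,v)-\mathcal{E}_T(u_2,v),\qquad\text{since }\ \mathcal{E}_T(u_1,v)=\int_Z Gv\,d\nu=\mathcal{E}_T(u_2,v).
\]

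Next I would unpack $\mathcal{E}_T(u_i,v)$ on the uniformized filling. By Remark~\ref{rem:Et-2nd-trace} I may replace the $p$-harmonic extension of the second argument by \emph{any} extension with the correct trace, so I would use the single function $\widehat v:=(\overline{u_1}-\overline{u_2})_+\in N^{1,p}(\overline X_\eps,\mu_\beta)$, where $\overline{u_i}$ is the $p$-harmonic extension of $u_i$. Because Newtonian functions are quasicontinuous and $\BCap_p^{\oXeps}$-q.e.\ point of $\overline X_\eps$ is a Lebesgue point, the trace commutes with the positive part, giving $T_X\widehat v=(T_X\overline{u_1}-T_X\overline{u_2})_+=(u_1-u_2)_+=v$, so this substitution is valid in $\mathcal{E}_T(u_i,v)$ for both $i$. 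By locality and the chain rule for the natural differential on the edges of $X_\eps$, $\nabla\widehat v=\chi_E\,\nabla(\overline{u_1}-\overline{u_2})$ $\mu_\beta$-a.e., where $E:=\{x\in X_\eps:\overline{u_1}(x)>\overline{u_2}(x)\}$, and therefore
\[
0=\mathcal{E}_T(u_1,v)-\mathcal{E}_T(u_2,v)=\int_E\big\langle|\nabla\overline{u_1}|^{p-2}\nabla\overline{u_1}-|\nabla\overline{u_2}|^{p-2}\nabla\overline{u_2},\ \nabla\overline{u_1}-\nabla\overline{u_2}\big\rangle\,d\mu_\beta.
\]
The integrand is pointwise nonnegative by the strict monotonicity inequality for $\xi\mapsto|\xi|^{p-2}\xi$, so it vanishes $\mu_\beta$-a.e.\ on $E$, which forces $\nabla\overline{u_1}=\nabla\overline{u_2}$ $\mu_\beta$-a.e.\ on $E$, i.e.\ $\nabla\widehat v=0$ $\mu_\beta$-a.e.\ on $X_\eps$. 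The $(1,p)$-Poincar\'e inequality on the connected space $(X_\eps,d_\eps,\mu_\beta)$ then gives $\widehat v\equiv c$ for some constant $c\ge0$. Taking traces, $v=T_X\widehat v=c$ $\nu$-a.e.\ in $Z$, so $T_Zv=c$ $\pi$-a.e.\ on $\partial Z$; combined with $T_Zv=0$ this yields $c=0$, that is $(u_1-u_2)_+=0$ $\nu$-a.e.\ in $Z$, which is the assertion.

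The only step I expect to require care is the commutation $T_X(\overline{u_1}-\overline{u_2})_+=(u_1-u_2)_+$: it relies on the fact that a Newtonian function has a quasicontinuous representative for which $\BCap_p^{\oXeps}$-q.e.\ point of $\overline X_\eps$ (in particular of $\partial X_\eps=\overline Z$) is a Lebesgue point, and that the averaging limit in~\eqref{eq:Tx-def} defining $T_X$ returns this Lebesgue value, so that the positive part and the limit interchange off a $\BCap_p^{\oXeps}$-null (hence, since $p>\sigma/\theta$, $\pi$-null and $\nu$-null) set. The remaining ingredients --- stability of $B^\theta_{p,p}$ and $N^{1,p}$ under $1$-Lipschitz post-composition, locality and the chain rule for $\nabla$, the strict monotonicity of the $p$-Laplacian vector field, and the rigidity furnished by the Poincar\'e inequality --- are routine. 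A second, essentially equivalent route feeds $\min\{u_1,u_2\}\in\A(f_1)$ and $\max\{u_1,u_2\}\in\A(f_2)$ into the two minimization problems~\eqref{eq:Inhom Minimization}: adding the minimality inequalities cancels the linear terms (as $\min+\max$ equals the pointwise sum), and combining energy-minimality of the $p$-harmonic extensions with the pointwise identity $|\nabla\min\{\overline{u_1},\overline{u_2}\}|^p+|\nabla\max\{\overline{u_1},\overline{u_2}\}|^p=|\nabla\overline{u_1}|^p+|\nabla\overline{u_2}|^p$ forces $I_G(\min\{u_1,u_2\})=I_G(u_1)$, so that $\min\{u_1,u_2\}$ is itself a minimizer with data $f_1$; the uniqueness in Theorem~\ref{thm:Inhom Existence} then gives $\min\{u_1,u_2\}=u_1$ $\nu$-a.e.\ in $Z$, i.e.\ $u_1\le u_2$ $\nu$-a.e.\ in $Z$.
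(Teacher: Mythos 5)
Your argument is correct, but your primary route is genuinely different from the paper's. The paper proves the comparison principle variationally: it feeds $\min\{u_1,u_2\}\in\A(f_1)$ and $\max\{u_1,u_2\}\in\A(f_2)$ into the two problems \eqref{eq:Inhom Minimization}, uses energy-minimality of the $p$-harmonic extensions together with locality of $\nabla$ on the set $A=\{\overline u_1>\overline u_2\}$ to conclude $I_G(\min\{u_1,u_2\})\le I_G(u_1)$, and then invokes the uniqueness from Theorem~\ref{thm:Inhom Existence} to get $\min\{u_1,u_2\}=u_1$ $\nu$-a.e.; this is essentially your ``second route,'' and your bookkeeping by summing the two minimality inequalities is a mild streamlining of the paper's set-splitting over $A$. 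Your main argument instead tests the weak Euler--Lagrange identities with $v=(u_1-u_2)_+$ (admissible since $T_Zv=(f_1-f_2)_+=0$ $\pi$-a.e.), replaces the second-slot extension by $\widehat v=(\overline u_1-\overline u_2)_+$ via Remark~\ref{rem:Et-2nd-trace}, and uses strict monotonicity of $\xi\mapsto|\xi|^{p-2}\xi$ plus the Poincar\'e inequality to force $\widehat v$ constant and then zero. This buys a proof that bypasses the uniqueness theorem and mirrors the classical comparison argument for the local $p$-Laplacian, at the cost of the extra trace bookkeeping you flagged: you need $T_X\widehat v=v$ $\nu$-a.e.\ and $T_Zv=0$ $\pi$-a.e., which rest on q.e.\ Lebesgue points of Newtonian functions on $(\oXeps,d_\eps,\mu_\beta)$ and on the fact that $\BCap^{\oXeps}_p$-null sets are $\pi$-null and $\nu$-null (available here because $\sigma+\beta/\eps<p$ and $\beta/\eps<p$, exactly the [GKS] facts the paper uses at the end of Proposition~\ref{prop:1}); note also that your appeal to ``minimizer implies weak solution'' is the content of Remark~\ref{rem:Euler-Lagrange}, which the paper likewise uses without further proof in the uniqueness part of Theorem~\ref{thm:Inhom Existence}, so this is consistent with the paper's own standards. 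Both routes are sound; the paper's stays entirely at the level of the functional and reuses uniqueness, while yours extracts the stronger intermediate information $\nabla(\overline u_1-\overline u_2)_+=0$ $\mu_\beta$-a.e.
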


\begin{proof}
Let $u:=\min\{u_1,u_2\}$ and $v:=\max\{u_1,u_2\}$.  Since $T_Zu_1=f_1$ and $T_Zu_2=f_2$, it then follows that $T_Zu=f_1$ and $T_Zv=f_2$, that is, $u\in\A(f_1)$ and $v\in\A(f_2)$.  Let 
\[
A:=\{x\in\overline X_\eps:\overline u_1(x)>\overline u_2(x)\},
\]
where $\overline u_1,\overline u_2\in N^{1,p}(\overline X_\eps,\mu)$ are the $p$-harmonic extensions of $u_1$ and $u_2$ to $\overline X_\eps$, respectively.  Since $v\in\A(f_2)$ and $u_2$ is a minimizer of \eqref{eq:Inhom Minimization} for boundary data $f_2$, we have that 
\begin{align*}
\int_{\overline X_\eps}|\nabla \overline u_2|^pd\mu_\beta-p\int_Zu_2 G\,d\nu=I_G(u_2)\le I_G(v)
&=\int_{\overline X_\eps}|\nabla \overline v|^pd\mu_\beta-p\int_Z vG\,d\nu\\
&\le\int_{\overline X_\eps}|\nabla (\max\{\overline u_1,\overline u_2\})|^p d\mu_\beta-p\int_{Z}\max\{u_1,u_2\}G\,d\nu.
\end{align*}
Here, the last inequality follows from the fact that $\overline v$ is the $p$-harmonic extension of $v=\max\{u_1,u_2\}$ and $T_X\max\{\overline u_1,\overline u_2\}=\max\{u_1,u_2\}$.  We then have that 
\begin{align*}
\int_{\overline X_\eps}|\nabla \overline u_2|^pd\mu_\beta-p\int_Zu_2 G\,d\nu
&\le\int_A|\nabla \overline u_1|^pd\mu_\beta+\int_{\overline X_\eps\setminus A}|\nabla \overline u_2|^pd\mu_\beta
-p\int_{A\cap Z}u_1G\,d\nu-p\int_{Z\setminus A}u_2G\,d\nu,
\end{align*}
and so it follows that 
\begin{equation}\label{eq:u1>u2}
\int_A |\nabla \overline u_2|^pd\mu_\beta-p\int_{A\cap Z}u_2G\,d\nu\le\int_{A}\nabla \overline u_1|^pd\mu_\beta-p\int_{A\cap Z}u_1G\,d\nu.
\end{equation}

Using the fact that $\overline u$ is the $p$-harmonic extension of $u=\min\{u_1,u_2\}$ and that $T_X\min\{\overline u_1,\overline u_2\}=\min\{u_1,u_2\}$, along with \eqref{eq:u1>u2}, we have that 
\begin{align*}
I_G(u)&=\int_{\overline X_\eps}|\nabla \overline u|^pd\mu_\beta-p\int_{Z}uG\,\nu\\
	&\le\int_{\overline X_\eps}|\nabla(\min\{\overline u_1,\overline u_2\})|^pd\mu_\beta-p\int_Z\min\{u_1,u_2\}G\,d\nu\\
	&=\int_A |\nabla \overline u_2|^pd\mu_\beta+\int_{\overline X_\eps\setminus A}|\nabla \overline u_1|^pd\mu_\beta
	-p\int_{A\cap Z}u_2G\,d\nu-p\int_{Z\setminus A}u_1G\,d\nu\\
	&\le\int_{\overline X_\eps}|\nabla \overline u_1|^pd\mu_\beta-p\int_Zu_1G\,d\nu=I_G(u_1).
\end{align*}
As $u_1$ is a minimizer of \eqref{eq:Inhom Minimization} for boundary data $f_1$ and $u\in\A(f_1)$, it follows that $u$ is also a minimizer.  By Theorem~\ref{thm:Inhom Existence}, $u_1$ is unique, and so $\min\{u_1,u_2\}=u_1$ $\nu$-a.e.\ in $Z$, completing the proof. 
\end{proof}

\begin{remark}
A note of caution is in order here. In the case that the function $G=0$, the comparison principle also implies a maximum principle: if the boundary data $f$ satisfies $f\le M$ for some real number $M$, then the solution also is bounded above by $M$. This is because when $G=0$, the constant function $M$ is a solution with boundary data $M$, and the comparison theorem applied to these two solutions yields the maximum principle. When $G\ne 0$ however, the maximum principle need not hold, and so the comparison theorem does not yield the validity of maximum principle when the inhomogeneity data $G$ is not
the zero function. Indeed, with the boundary data $f=0$, the zero-function is \emph{never} a solution to the minimization problem when $G$ is not the zero function. 
\end{remark}

\section{Stability}

In this section, we continue to adopt the standing assumptions of the previous sections and show that 
solutions to the inhomogeneous Dirichlet problem given by Definition~\ref{def:Dirichlet1} are stable 
with respect to perturbation of both the Dirichlet data $f$ and the inhomogeneous data $G$.  
We are interested in knowing whether if the data $(f,G)$ is approximated by a sequence of data $(f_k,G_k)$, 
then the solution $u_k$ to the
$(f_k,G_k)$-inhomogeneous Dirichlet problem converges to the solution $u$ of the $(f,G)$-inhomogeneous Dirichlet 
problem. We will prove this in the final subsection below,
see Theorem~\ref{thm:stable}.
We first handle two simpler cases, namely, if the inhomogeneity data $G_k=G$ for some fixed $G$, or if the Dirichlet boundary data
$f_k=f$ for some fixed $f$; these simpler
cases are handled in the first two subsections respectively, and provide tools that are used in proving the final Theorem~\ref{thm:stable} at
the end.

We begin by proving the following gradient estimate for solutions to the $(f,G)$-inhomogeneous Dirichlet problem.  Recall that given $u\in B^\theta_{p,p}(\overline Z,\nu)$, we denote by $\overline u$ the $p$-harmonic extension of $u$ to $\overline X_\eps$.

\begin{lem}\label{lem:GradientBound}
Let $f\in B^{\theta-\sigma/p}_{p,p}(\partial Z,\pi)$ and let $G\in L^{p'}(Z,\nu)$.  If $u\in B^\theta_{p,p}(\overline Z,\nu)$ is a solution to the $(f,G)$-inhomogeneous Dirichlet problem, then 
\begin{align}\label{eq:GradientBound}
\|\nabla \overline u&\|_{L^p(\overline X_\eps,\mu_\beta)}\nonumber\\
&\le C\left(\|f\|_{B^{\theta-\sigma/p}_{p,p}(\partial Z, \pi)}^p+\|G\|_{L^{p'}(Z,\nu)}\|f\|_{B^{\theta-\sigma/p}_{p,p}(\partial Z,\pi)}+\left(1+\|G\|_{L^{p'}(Z,\nu)}\right)^{1/(p-1)}\right)=:K(f,G).
\end{align}
Here, the constant $C$ depends only on    
$p$, $\theta$, the doubling constant of $\nu$,  $\diam(Z)$, $\nu(Z)$, and $\BCap_{\theta,p}^{\overline Z}(\partial Z)$.
\end{lem}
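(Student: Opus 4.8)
The plan is to use the fact (Remark~\ref{rem:Euler-Lagrange}) that a solution $u$ minimizes the energy $I_G$ over $\A(f)$, to test this minimality against the explicit competitor $E_Zf\in\A(f)$, and then to run an absorption argument adapted to the nonlinearity. Throughout, write $F:=E_X(E_Zf)\in N^{1,p}(\overline X_\eps,\mu_\beta)$ and $X:=\|\nabla\overline u\|_{L^p(\overline X_\eps,\mu_\beta)}$.

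The first ingredient is a pair of bounds coming from the trace/extension theory. By boundedness of $E_Z$ (Lemma~\ref{lem:traceZ}) and of $E_X$ in $L^p$ and in energy (Theorem~\ref{thm:HypFillThm}), $\|F\|_{N^{1,p}(\overline X_\eps,\mu_\beta)}\lesssim\|f\|_{B^{\theta-\sigma/p}_{p,p}(\partial Z,\pi)}$; and since the $p$-harmonic extension minimizes energy among $N^{1,p}$-extensions with the same trace on $Z$, also $\|\nabla\overline{E_Zf}\|_{L^p(\overline X_\eps,\mu_\beta)}\le\|\nabla F\|_{L^p(\overline X_\eps,\mu_\beta)}\lesssim\|f\|_{B^{\theta-\sigma/p}_{p,p}(\partial Z,\pi)}$. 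The second ingredient is the Maz'ya argument already used in \eqref{eq:Mazya bound}: since $u\in\A(f)$, the function $\overline u-F$ vanishes $\BCap_p^{\oXeps}$-q.e.\ on $\partial Z$ by \eqref{eq:CapComparison}, and as $\BCap_p^{\oXeps}(\partial Z)>0$, Theorem~\ref{thm:Mazya} (applied to $\overline u-F$ on a ball containing $\overline X_\eps$) gives $\|\overline u-F\|_{L^p(\overline X_\eps,\mu_\beta)}\lesssim\|\nabla\overline u\|_{L^p}+\|\nabla F\|_{L^p}$; combining with the first ingredient, $\|\overline u\|_{N^{1,p}(\overline X_\eps,\mu_\beta)}\lesssim X+\|f\|_{B^{\theta-\sigma/p}_{p,p}(\partial Z,\pi)}$.

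Next I would expand $I_G(u)\le I_G(E_Zf)$. For the right-hand side, H\"older's inequality, boundedness of $T_X$, and the energy bound above give
\[
I_G(E_Zf)=\|\nabla\overline{E_Zf}\|_{L^p}^p-p\int_Z(E_Zf)\,G\,d\nu\lesssim\|f\|_{B^{\theta-\sigma/p}_{p,p}}^p+\|G\|_{L^{p'}(Z,\nu)}\,\|f\|_{B^{\theta-\sigma/p}_{p,p}}.
\]
For the left-hand side, $X^p=I_G(u)+p\int_ZuG\,d\nu\le I_G(E_Zf)+p\,\|T_X\overline u\|_{L^p(Z,\nu)}\,\|G\|_{L^{p'}(Z,\nu)}$, and bounding $\|T_X\overline u\|_{L^p(Z,\nu)}\lesssim\|\overline u\|_{N^{1,p}(\overline X_\eps,\mu_\beta)}\lesssim X+\|f\|_{B^{\theta-\sigma/p}_{p,p}}$ and absorbing the cross term $\|G\|_{L^{p'}}\|f\|_{B^{\theta-\sigma/p}_{p,p}}$ into the right side, I arrive at
\[
X^p\le A+BX,\qquad A:=C\bigl(\|f\|_{B^{\theta-\sigma/p}_{p,p}}^p+\|G\|_{L^{p'}}\|f\|_{B^{\theta-\sigma/p}_{p,p}}\bigr),\quad B:=C\|G\|_{L^{p'}},
\]
where $C$ depends only on $p$, $\theta$, the doubling constant of $\nu$, $\diam(Z)$, $\nu(Z)$, and $\BCap_{\theta,p}^{\overline Z}(\partial Z)$ (the last three entering through Theorem~\ref{thm:Mazya} and \eqref{eq:CapComparison}).

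Finally I would close with an elementary dichotomy on $X^p\le A+BX$: either $BX\le A$, whence $X^p\le 2A$ and $X\le(2A)^{1/p}$; or $BX>A$, whence $X^p\le 2BX$ and $X\le(2B)^{1/(p-1)}$. Using $t\le t^p+1$, $t^{1/p}\le t+1$, and $1\le(1+\|G\|_{L^{p'}})^{1/(p-1)}$, both $(2A)^{1/p}$ and $(2B)^{1/(p-1)}$ are dominated by a constant times $\|f\|_{B^{\theta-\sigma/p}_{p,p}}^p+\|G\|_{L^{p'}}\|f\|_{B^{\theta-\sigma/p}_{p,p}}+(1+\|G\|_{L^{p'}})^{1/(p-1)}=K(f,G)$, which is \eqref{eq:GradientBound}. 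The only genuinely delicate point is that $p$ may lie in $(1,2)$, so that $BX$ cannot simply be absorbed into $X^p$ via Young's inequality; the dichotomy is exactly what circumvents this, and it is also the source of the $(1+\|G\|_{L^{p'}})^{1/(p-1)}$ term. Everything else is assembled from results established in Sections~2--5.
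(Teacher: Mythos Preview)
Your proof is correct and follows essentially the same route as the paper's: test the minimality $I_G(u)\le I_G(E_Zf)$, control the $L^p$-norm of the minimizer via the Maz'ya-type inequality (Theorem~\ref{thm:Mazya}) applied to $\overline u-F$, and close with a dichotomy to absorb the linear-in-$X$ term. The only cosmetic difference is the bookkeeping of the dichotomy: you split on whether $BX\le A$ in $X^p\le A+BX$, while the paper rewrites this as $X(X^{p-1}-C\|G\|_{L^{p'}})\le \|\nabla f\|_{L^p}^p+C\|G\|_{L^{p'}}\|\nabla f\|_{L^p}$ and splits on whether $X^{p-1}\le C\|G\|_{L^{p'}}+1$; these are equivalent.
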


\begin{proof}
Since we will not consider the Dirichlet data $f$ to be extended to $\oXeps$ as a $p$-harmonic extension,
for ease of notation we will denote $E_{X,\partial}f$ and $T_X\circ E_{X,\partial}f=E_Zf$ also by $f$.
Since $u$ is a solution to the $(f,G)$-inhomogeneous Dirichlet problem, we have by 
the H\"older inequality and boundedness of the trace operator $T_X$ that 
\begin{align*}
\int_{\overline X_\eps}|\nabla \overline u|^pd\mu_\beta&\le \int_{\overline X_\eps}|\nabla f|^pd\mu_\beta+p\int_Z(u-f)G\, d\nu\\
	&\le\|\nabla f\|_{L^p(\overline X_\eps,\mu_\beta)}^p+p\|G\|_{L^{p'}(Z,\nu)}\|u-f\|_{L^p(Z,\nu)}\\
	&\le\|\nabla f\|_{L^p(\overline X_\eps,\mu_\beta)}^p+C\|G\|_{L^{p'}(Z,\nu)}\left(\|\overline u-f\|_{L^p(\overline X_\eps,\mu_\beta)}+\|\nabla(\overline u-f)\|_{L^p(\overline X_\eps,\mu_\beta)}\right).
\end{align*}
Since $\overline u-f=0$ $\pi$-a.e.\ on $\partial Z$, we have from Theorem~\ref{thm:Mazya} that 
\begin{align*}
\|\overline u-f\|_{L^p(\overline X_\eps,\mu_\beta)}\le C\|\nabla(\overline u-f)\|_{L^p(\overline X_\eps,\mu_\beta)}\le C\|\nabla \overline u\|_{L^p(\overline X_\eps,\mu_\beta)}
+C\|\nabla f\|_{L^p(\overline X_\eps,\mu_\beta)}.
\end{align*}
Substituting this into the previous expression, applying the triangle inequality, and regrouping the terms yields
\begin{align*}
\|\nabla\overline u\|_{L^p(\overline X_\eps,\mu_\beta)}\left(\|\nabla\overline u\|^{p-1}_{L^p(\overline X_\eps,\mu_\beta)}-C\|G\|_{L^{p'}(Z,\nu)}\right)\le\|\nabla f\|^p_{L^p(\overline X_\eps,\mu_\beta)}+C\|G\|_{L^{p'}(Z,\nu)}\|\nabla f\|_{L^p(\overline X_\eps,\mu_\beta)}.
\end{align*}
From this, we see that either $\|\nabla\overline u\|_{L^p(\overline X_\eps,\mu_\beta)}^{p-1}\le C\|G\|_{L^{p'}(Z,\nu)}+1$ (in which case~\eqref{eq:GradientBound} holds), or 
\begin{align*}
\|\nabla\overline u\|_{L^p(\overline X_\eps,\mu_\beta)}&\le\|\nabla f\|^p_{L^p(\overline X_\eps,\mu_\beta)}+C\|G\|_{L^{p'}(Z,\nu)}\|\nabla f\|_{L^p(\overline X_\eps,\mu_\beta)}\\
	&\le C\left(\|f\|^p_{B^{\theta-\sigma/p}_{p,p}(\partial Z,\pi)}+\|G\|_{L^{p'}(Z,\nu)}\|f\|_{B^{\theta-\sigma/p}_{p,p}(\partial Z,\pi)}\right),
\end{align*}
where we have used boundedness in energy
of the extension operator $E_{X,\partial}$ in the last inequality.  Thus the conclusion of the lemma holds, with constant $C\ge 1$ depending only on  
the boundedness constants of the operators $T_X$ and $E_{X,\partial}$, and the constant in the Maz'ya inequality Theorem~\ref{thm:Mazya},
but these constants depend only on $p$, $\theta$, the doubling constant of $\nu$,  $\diam(Z)$, $\nu(Z)$, and $\BCap_{\theta,p}^{\overline Z}(\partial Z)$.
\end{proof}

\subsection{Stability with respect to the Dirichlet boundary data}\label{sub:Dirich-stab}

In this subsection we consider the stability of the Dirichlet problem when perturbing the Dirichlet boundary data. 

\begin{prop}\label{prop:GradientStability}
Let $G\in L^{p'}(Z,\nu)$, and let $f,g\in B^{\theta-\sigma/p}_{p,p}(\partial Z,\pi)$.  Let $u,v\in B^\theta_{p,p}(\overline Z,\nu)$ be solutions to the $(f,G)$- and $(g,G)$-inhomogeneous Dirichlet problems respectively, and let $K(f,G)$ and $K(g,G)$ be the respective
constants on the right hand side of \eqref{eq:GradientBound}. Then, when $p\ge 2$, we have
\begin{align}\label{eq:GradientStability 1}
\int_{\overline X_\eps}|\nabla(\overline u-\overline v)|^pd\mu_\beta\le C\left(K(f,G)^{p-1}+K(g,G)^{p-1}\right)\|f-g\|_{B^{\theta-\sigma/p}_{p,p}(\partial Z,\pi)},
\end{align}
and when $1<p<2$, we have
\begin{align}\label{eq:GradientStability 2}
\int_{\overline X_\eps}|\nabla&(\overline u-\overline v)|^pd\mu_\beta\nonumber\\
&\le C\left(K(f,G)^{p-1}+K(g,G)^{p-1}\right)^{p/2}\left(K(f,G)^p+K(g,G)^p\right)^{(2-p)/2}\|f-g\|_{B^{\theta-\sigma/p}_{p,p}(\partial Z,\pi)}^{p/2}.
\end{align}
Here the constant $C\ge 1$ depends only on $p$, $\theta$, and the doubling constant of $\nu$.   
\end{prop}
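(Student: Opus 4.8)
The plan is to reduce matters to the classical monotonicity estimate for the $p$-Laplacian, after correcting the natural competitor $\overline u-\overline v$ so that it becomes an admissible (zero-trace) test function.

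First I would build the test function. Set $F:=E_{X,\partial}(f-g)\in N^{1,p}(\overline X_\eps,\mu_\beta)$ and $\phi:=\overline u-\overline v-F$ (all three of these lie in $N^{1,p}(\overline X_\eps,\mu_\beta)$). Since $T_X\overline u=u$, $T_X\overline v=v$, and $T_XF=T_X\circ E_{X,\partial}(f-g)=E_Z(f-g)$ by Lemma~\ref{lem:traceZ}, the function $w:=T_X\phi=u-v-E_Z(f-g)$ satisfies $T_Zw=(f-g)-T_Z\circ E_Z(f-g)=0$ $\pi$-a.e.\ on $\partial Z$, using $T_Z\circ E_Z=\Id$. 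Hence $w$ is admissible in the weak formulation of both the $(f,G)$- and the $(g,G)$-problem, and $\phi$ is an extension of $w$ to $\overline X_\eps$ with $T_X\phi=w$. By Remark~\ref{rem:Et-2nd-trace} and the definition of $\mathcal{E}_T$ we may evaluate $\mathcal{E}_T(u,w)$ and $\mathcal{E}_T(v,w)$ using the extension $\phi$ in place of the $p$-harmonic extension of $w$; testing both equations with $w$ and subtracting then gives, since $\nabla\phi=\nabla(\overline u-\overline v)-\nabla F$,
\[
\int_{X_\eps}\big\langle |\nabla\overline u|^{p-2}\nabla\overline u-|\nabla\overline v|^{p-2}\nabla\overline v,\ \nabla(\overline u-\overline v)\big\rangle\,d\mu_\beta
=\int_{X_\eps}\big\langle |\nabla\overline u|^{p-2}\nabla\overline u-|\nabla\overline v|^{p-2}\nabla\overline v,\ \nabla F\big\rangle\,d\mu_\beta.
\]
As $\mu_\beta(\partial X_\eps)=0$, integrals over $X_\eps$ and $\overline X_\eps$ coincide.

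Next I would estimate the two sides. For the right-hand side, H\"older's inequality together with $\||a|^{p-2}a\|_{L^{p'}}=\|a\|_{L^p}^{p-1}$ gives the bound $\big(\|\nabla\overline u\|_{L^p(\overline X_\eps,\mu_\beta)}^{p-1}+\|\nabla\overline v\|_{L^p(\overline X_\eps,\mu_\beta)}^{p-1}\big)\,\|\nabla F\|_{L^p(\overline X_\eps,\mu_\beta)}$; boundedness in energy of $E_{X,\partial}$ controls $\|\nabla F\|_{L^p}$ by $\|f-g\|_{B^{\theta-\sigma/p}_{p,p}(\partial Z,\pi)}$, while Lemma~\ref{lem:GradientBound} bounds $\|\nabla\overline u\|_{L^p}\le K(f,G)$ and $\|\nabla\overline v\|_{L^p}\le K(g,G)$. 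For the left-hand side I would invoke the standard monotonicity inequalities: there is $c_p>0$ with $\langle |a|^{p-2}a-|b|^{p-2}b,\,a-b\rangle\ge c_p|a-b|^p$ when $p\ge 2$, and $\langle |a|^{p-2}a-|b|^{p-2}b,\,a-b\rangle\ge c_p\,|a-b|^2(|a|+|b|)^{p-2}$ when $1<p<2$ (with the convention that the integrand vanishes where $|a|+|b|=0$, or via an approximation by $|a|+|b|+\delta$). When $p\ge 2$ this yields \eqref{eq:GradientStability 1} at once. When $1<p<2$ one first applies H\"older with exponents $2/p$ and $2/(2-p)$, writing $\int|\nabla(\overline u-\overline v)|^p\,d\mu_\beta$ as the integral of $\big(|\nabla(\overline u-\overline v)|^2(|\nabla\overline u|+|\nabla\overline v|)^{p-2}\big)^{p/2}\cdot(|\nabla\overline u|+|\nabla\overline v|)^{p(2-p)/2}$, bounding the $L^{2/p}$-norm of the first factor by a constant times $(\text{left-hand side})^{p/2}$ and the $L^{2/(2-p)}$-norm of the second by a constant times $\big(\|\nabla\overline u\|_{L^p}^p+\|\nabla\overline v\|_{L^p}^p\big)^{(2-p)/2}\le C(K(f,G)^p+K(g,G)^p)^{(2-p)/2}$; combining with the right-hand side estimate produces \eqref{eq:GradientStability 2}. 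The residual constant $C$ depends only on $c_p$ and the energy-boundedness constant of $E_{X,\partial}$, hence (since the latter is governed by the fixed parameters $\alpha,\tau,\eps,\beta=\eps p(1-\theta)$ and the doubling constant of $\nu$) only on $p$, $\theta$, and the doubling constant of $\nu$; all remaining geometric dependence is already packaged into $K(f,G)$ and $K(g,G)$.

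The only genuinely delicate point, and the step I expect to be the main obstacle, is the construction of the test function in the first step: one cannot test with $\overline u-\overline v$ itself, whose trace on $\partial Z$ is $f-g$ rather than $0$, so one must subtract $E_{X,\partial}(f-g)$ and then carefully unwind the tower of operators $T_X,T_Z,E_{X,\partial},E_Z$ via the identities $E_Z=T_X\circ E_{X,\partial}$, $T_Z\circ E_Z=\Id$, and $T_Z\circ T_X=T_{X,\partial}$ from Lemma~\ref{lem:traceZ} and Remark~\ref{rem:traces-galore}, together with Remark~\ref{rem:Et-2nd-trace}, which is exactly what licenses inserting the non-$p$-harmonic extension $\phi$ into $\mathcal{E}_T$. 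Once this bookkeeping is in place, the rest is the now-standard $p$-Laplace stability computation, split into the $p\ge 2$ and $1<p<2$ regimes according to the two monotonicity inequalities.
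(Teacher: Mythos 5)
Your proposal is correct and follows essentially the same route as the paper: both test the two weak formulations with the zero-trace function obtained by subtracting the extension $E_{X,\partial}(f-g)$ from $\overline u-\overline v$ (licensed by Remark~\ref{rem:Et-2nd-trace}), arrive at the identity equating the monotonicity pairing against $\nabla(\overline u-\overline v)$ with the pairing against $\nabla E_{X,\partial}(f-g)$, and then combine H\"older, Lemma~\ref{lem:GradientBound}, the energy-boundedness of $E_{X,\partial}$, and the standard $p$-Laplacian ellipticity inequalities (split into $p\ge 2$ and $1<p<2$, the latter with the $2/p$, $2/(2-p)$ H\"older step). The trace bookkeeping via $E_Z=T_X\circ E_{X,\partial}$ and $T_Z\circ E_Z=\Id$ is exactly the justification the paper relies on, so no gap remains.
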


\begin{proof}
As before, we denote by $f$ and $g$ the corresponding extensions of $f$ and $g$ to 
$Z$ and $\overline X_\eps$ given by $E_Z$ and $E_{X,\partial}$.  We then have that 
$(u-f)-(v-g)=u-v-f+g\in B^\theta_{p,p}(\overline Z,\nu)$, with $T_Z(u-v-f+g)=0$ $\pi$-a.e.\ in $\partial Z$.  
From Remark~\ref{rem:Et-2nd-trace}, we know that
$\mathcal{E}_T(u,(u-f)-(v-g))$ can be computed using the $p$-harmonic extension $\overline{u}$ of $u$ to $X_\eps$ and with
the extension $(\overline{u}-f)-(\overline{v}-g)$ as the extension of $(u-f)-(v-g)$.
Since $u$ is a solution to the $(f,G)$-inhomogeneous Dirichlet problem, it follows that 
\begin{align*}
\int_{\overline X_\eps} |\nabla \overline u|^{p-2}\langle\nabla \overline u,\,\nabla(\overline u-\overline v-f+g)\rangle d\mu_\beta=\int_Z (u-v-f+g)Gd\nu,
\end{align*}
from which we obtain
\begin{align*}
\int_{\overline X_\eps} |\nabla \overline u|^{p-2}\langle\nabla \overline u,\,\nabla(\overline u-\overline v)\rangle d\mu_\beta=\int_Z (u-v-f+g)Gd\nu+\int_{\oXeps} |\nabla \overline u|^{p-2}\langle\nabla \overline u,\,\nabla(f-g)\rangle d\mu_\beta.
\end{align*}
Likewise, since $v$ is a solution to the $(g,G)$-inhomogeneous Dirichlet problem, we obtain
\begin{align*}
\int_{\oXeps} |\nabla\overline v|^{p-2}\langle \nabla\overline v,\,\nabla(\overline u-\overline v)\rangle d\mu_\beta=\int_Z (u-v-f+g)Gd\nu+\int_{\oXeps} |\nabla\overline v|^{p-2}\langle\nabla\overline v,\,\nabla(f-g)\rangle d\mu_\beta.
\end{align*}
By subtracting this from the previous equality and applying the H\"older inequality, Lemma~\ref{lem:GradientBound}, and boundedness of the extension operator $E_{X,\partial}$, we have
\begin{align}\label{eq:7-1, 1}
\int_{\oXeps}\langle|\nabla\overline u|^{p-2}\nabla\overline u-|\nabla\overline v|^{p-2}\nabla\overline v,\ &\nabla(\overline u-\overline v)\rangle d\mu_\beta=\int_{\oXeps}\langle|\nabla\overline u|^{p-2}\nabla\overline u-|\nabla\overline v|^{p-2}\nabla\overline v,\ \nabla(f-g)\rangle d\mu_\beta\nonumber\\
	&\le\int_{\oXeps}\left(|\nabla\overline u|^{p-1}+|\nabla\overline v|^{p-1}\right)|\nabla(f-g)|d\mu_\beta\nonumber\\
	&\le \left(\|\nabla\overline u\|^{p-1}_{L^p(\overline X_\eps,\mu_\beta)}+\|\nabla\overline v\|_{L^p(\overline X_\eps,\mu_\beta)}^{p-1}\right)\|\nabla(f-g)\|_{L^p(\overline X_\eps,\mu_\beta)}\nonumber\\
	&\le C\left(K(f,G)^{p-1}+K(g,G)^{p-1}\right)\|f-g\|_{B^{\theta-\sigma/p}_{p,p}(\partial Z,\pi)},
\end{align}
where $K(f,G)$ and $K(g,G)$ denote the constants on the right hand side of the  inequality in Lemma~\ref{lem:GradientBound} for $u$ and $v$ respectively. 

By~\cite[page~95]{Lind}, 
it follows that 
\begin{align}\label{eq:Ellipticity}
\langle|\nabla\overline u|^{p-2}\nabla\overline u-|\nabla\overline v|^{p-2}\nabla\overline v,\ &\nabla(\overline u-\overline v)\rangle\ge
\begin{cases}
C\, |\nabla(\overline u-\overline v)|^p,&p\ge 2,\\
C\, \left(|\nabla\overline u|+|\nabla\overline v|\right)^{p-2}|\nabla(\overline u-\overline v)|^2,& 1<p<2,
\end{cases}
\end{align}
with constant $C\ge 1$ depending only on $p$.  Thus, when $p\ge 2$, it follows from \eqref{eq:7-1, 1} that 
\begin{align*}
\int_{\oXeps}|\nabla(\overline u-\overline v)|^pd\mu_\beta&\le C\left(K(f,G)^{p-1}+K(g,G)^{p-1}\right)\|f-g\|_{B^{\theta-\sigma/p}_{p,p}(\partial Z,\pi)},
\end{align*}
which gives us \eqref{eq:GradientStability 1}.

When $1<p<2$, it follows from the H\"older inequality, \eqref{eq:Ellipticity},  \eqref{eq:7-1, 1}, and Lemma~\ref{lem:GradientBound} that 
\begin{align*}
\int_{\oXeps}&|\nabla(\overline u-\overline v)|^pd\mu_\beta\le\left(\int_{\oXeps}|\nabla(\overline u-\overline v)|^2\left(|\nabla\overline u|+|\nabla\overline v|\right)^{p-2}d\mu_\beta\right)^{p/2}\left(\int_{\oXeps}\left(|\nabla\overline u|+|\nabla\overline v|\right)^pd\mu_\beta\right)^{(2-p)/2}\\
	&\le C\left(K(f,G)^{p-1}+K(g,G)^{p-1}\right)^{p/2}\left(\|\nabla\overline u\|_{L^p(\overline X_\eps,\mu_\beta)}^p+\|\nabla\overline v\|_{L^p(\overline X_\eps,\mu_\beta)}^p\right)^{(2-p)/2}\|f-g\|_{B^{\theta-\sigma/p}_{p,p}(\partial Z,\pi)}^{p/2}\\
	&\le C\left(K(f,G)^{p-1}+K(g,G)^{p-1}\right)^{p/2}\left(K(f,G)^p+K(g,G)^p\right)^{(2-p)/2}\|f-g\|_{B^{\theta-\sigma/p}_{p,p}(\partial Z,\pi)}^{p/2},
\end{align*}
which gives \eqref{eq:GradientStability 2}.  We note that in either case, the constant $C\ge 1$ depends only on $p$, $\theta$, and the boundedness constants from the relevant trace and extension operators, which depend only on $p$, $\theta$ and the doubling constant of $\nu$.
\end{proof}

\subsection{Stability with respect to the inhomogeneity data}\label{sub:stab-inhomData}

Having established that the problem is stable under perturbation of the Dirichlet boundary data in the prior subsection, we now
turn our attention to establishing stability under perturbation of the inhomogeneity data. 

\begin{prop}\label{prop:GradientStabilityNeumann}
Let $f\in B^{\theta-\sigma/p}_{p,p}(\partial Z,\pi)$, let $G,H\in L^{p'}(Z,\nu)$, and let $u,v\in B^\theta_{p,p}(\overline Z,\nu)$ be solutions to the $(f,G)$- and $(f,H)$-inhomogeneous Dirichlet problems respectively.  Then, if $p\ge 2$, we have that 
\begin{align}\label{eq:GradientStabilityNeumann 1}
\int_{\oXeps}|\nabla(\overline u-\overline v)|^pd\mu_\beta\le C\, \|G-H\|_{L^{p'}(Z,\nu)}^{p'},
\end{align}
and when $1<p<2$, we have 
\begin{align}\label{eq:GradientStabilityNeumann 2}
\int_{\oXeps}|\nabla (\overline u-\overline v)|^pd\mu_\beta\le C\, \left(K(f,G)^p+K(f,H)^p\right)^{(2-p)}\, \|G-H\|_{L^{p'}(Z,\nu)}^p.
\end{align}
Here the constant $C\ge 1$ depends only on $p$, $\theta$, and the doubling constant of $\nu$, as well as $\diam (Z)$, $\nu(Z)$, and $\BCap_{\theta,p}^{\overline Z}(\partial Z)$. 
\end{prop}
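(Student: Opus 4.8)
The plan is to follow the template of Proposition~\ref{prop:GradientStability}, the key simplification being that since $u$ and $v$ carry the \emph{same} Dirichlet data $f$, the function $u-v$ is itself an admissible test function: $T_Z(u-v)=f-f=0$ $\pi$-a.e.\ on $\partial Z$ by linearity of $T_Z$. First I would test the weak equation for $u$ (with inhomogeneity $G$) and the one for $v$ (with inhomogeneity $H$) against $u-v$, evaluate $\mathcal{E}_T(u,u-v)$ and $\mathcal{E}_T(v,u-v)$ using the $p$-harmonic extensions $\overline u,\overline v$ as permitted by Remark~\ref{rem:Et-2nd-trace}, and subtract; since $\mu_\beta(\partial X_\eps)=0$ this yields the monotonicity identity
\[
\int_{\oXeps}\langle|\nabla\overline u|^{p-2}\nabla\overline u-|\nabla\overline v|^{p-2}\nabla\overline v,\ \nabla(\overline u-\overline v)\rangle\,d\mu_\beta=\int_Z(G-H)(u-v)\,d\nu.
\]

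Next I would estimate the right-hand side. By H\"older it is bounded by $\|G-H\|_{L^{p'}(Z,\nu)}\|u-v\|_{L^p(Z,\nu)}$, and by boundedness of $T_X$ one has $\|u-v\|_{L^p(Z,\nu)}=\|T_X(\overline u-\overline v)\|_{L^p(Z,\nu)}\lesssim\|\overline u-\overline v\|_{N^{1,p}(\oXeps,\mu_\beta)}$. To turn the $N^{1,p}$-norm into a pure energy term I would invoke the Maz'ya inequality Theorem~\ref{thm:Mazya} exactly as in the proof of Lemma~\ref{lem:GradientBound}: since $T_{X,\partial}\overline u=T_Z\circ T_X\overline u=f=T_{X,\partial}\overline v$ $\pi$-a.e.\ on $\partial Z$ (Remark~\ref{rem:traces-galore}), the zero set of $\overline u-\overline v$ contains $\partial Z$ up to a $\pi$-null set, and because $p>\sigma/\theta$, Remark~\ref{rem:p-large-vs-unique} combined with~\eqref{eq:CapComparison} forces $\BCap^{\oXeps}_p$ of that set to be positive; applying Theorem~\ref{thm:Mazya} with a ball $B\supset\oXeps$ gives $\|\overline u-\overline v\|_{L^p(\oXeps,\mu_\beta)}\lesssim\|\nabla(\overline u-\overline v)\|_{L^p(\oXeps,\mu_\beta)}$. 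Thus the right-hand side of the identity is at most $C\|G-H\|_{L^{p'}(Z,\nu)}\|\nabla(\overline u-\overline v)\|_{L^p(\oXeps,\mu_\beta)}$.

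Finally I would close the estimate by feeding in the ellipticity inequality~\eqref{eq:Ellipticity}, handling the two ranges of $p$ separately. For $p\ge 2$ the integrand on the left is $\gtrsim|\nabla(\overline u-\overline v)|^p$, so writing $a:=\|\nabla(\overline u-\overline v)\|_{L^p(\oXeps,\mu_\beta)}$ one gets $a^p\lesssim\|G-H\|_{L^{p'}}\,a$, hence $a^{p-1}\lesssim\|G-H\|_{L^{p'}}$, and raising to the power $p'=p/(p-1)$ gives~\eqref{eq:GradientStabilityNeumann 1}. For $1<p<2$ I would first split $\int|\nabla(\overline u-\overline v)|^p\,d\mu_\beta$ by H\"older with exponents $2/p$ and $2/(2-p)$ into the product of $\big(\int|\nabla(\overline u-\overline v)|^2(|\nabla\overline u|+|\nabla\overline v|)^{p-2}\,d\mu_\beta\big)^{p/2}$ and $\big(\int(|\nabla\overline u|+|\nabla\overline v|)^p\,d\mu_\beta\big)^{(2-p)/2}$; the first factor is controlled by the left-hand side of the identity via~\eqref{eq:Ellipticity}, hence by $\big(\|G-H\|_{L^{p'}}\,a\big)^{p/2}$, while the second is $\lesssim\big(K(f,G)^p+K(f,H)^p\big)^{(2-p)/2}$ by the gradient bound Lemma~\ref{lem:GradientBound}; so $a^p\lesssim\big(\|G-H\|_{L^{p'}}\,a\big)^{p/2}\big(K(f,G)^p+K(f,H)^p\big)^{(2-p)/2}$, and cancelling $a^{p/2}$ then squaring yields~\eqref{eq:GradientStabilityNeumann 2}. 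I do not expect a genuine obstacle here; the one step demanding care is the Maz'ya application, i.e.\ checking that the shared Dirichlet data pins $\overline u$ and $\overline v$ together on a piece of $\partial Z$ of positive $p$-capacity, which is precisely where $p>\max\{1,\sigma\}$ and $\pi(\partial Z)>0$ are used.
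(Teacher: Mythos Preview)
Your proposal is correct and follows essentially the same route as the paper: test both weak equations against $u-v$, subtract to get the monotonicity identity with right-hand side $\int_Z(u-v)(G-H)\,d\nu$, bound this via H\"older, boundedness of $T_X$, and the Maz'ya inequality, then invoke~\eqref{eq:Ellipticity} and close the estimate separately for $p\ge 2$ and $1<p<2$ exactly as you describe. One small slip: at the end you write $p>\max\{1,\sigma\}$, but the standing assumption (and what is actually needed for the capacity of $\partial Z$ to be positive) is $p>\max\{1,\sigma/\theta\}$.
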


\begin{proof}
Since $T_Zu=f=T_Zv$ $\pi$-a.e.\ on $\partial Z$, we have that $T_Z(u-v)=0$ $\pi$-a.e.\ on $\partial Z$.  Therefore, since $u$ and $v$ are solutions to the $(f,G)$- and $(f,H)$-inhomogeneous Dirichlet problems, it follows that
\begin{align*}
\int_{\oXeps}|\nabla\overline u|^{p-2}\langle\nabla\overline u,\ \nabla(\overline u-\overline v)\rangle d\mu_\beta&=\int_Z(u-v)Gd\nu
\end{align*}
as well as
\begin{align*}
\int_{\oXeps}|\nabla\overline v|^{p-2}\langle\nabla\overline v,\ \nabla(\overline u-\overline v)\rangle d\mu_\beta&=\int_Z(u-v)Hd\nu,
\end{align*}
thanks to Remark~\ref{rem:Et-2nd-trace}.

Subtracting the second equality from the first, and using H\"older's inequality, boundedness of $T_X$, and Theorem~\ref{thm:Mazya}, 
we have that
\begin{align}\label{eq:7-2, 1}
\int_{\oXeps}\langle|\nabla\overline u|^{p-2}\nabla\overline u-|\nabla\overline v|^{p-2}\nabla\overline v,\ &\nabla(\overline u-\overline v)\rangle d\mu_\beta=\int_{Z}(u-v)(G-H)d\nu\nonumber\\
	&\le\|u-v\|_{L^p(Z,\nu)}\|G-H\|_{L^{p'}(Z,\nu)}\nonumber\\
	&\le C\left(\|\nabla(\overline u-\overline v)\|_{L^p(\overline X_\eps,\mu_\beta)}+\|\overline u-\overline v\|_{L^p(\overline X_\eps,\mu_\beta)}\right)\|G-H\|_{L^{p'}(Z,\nu)}\nonumber\\
	&\le C\|\nabla(\overline u-\overline v)\|_{L^p(\overline X_\eps,\mu_\beta)}\, \|G-H\|_{L^{p'}(Z,\nu)}.
\end{align}
When $p\ge2$, it follows from \eqref{eq:Ellipticity} and \eqref{eq:7-2, 1} that 
\begin{align*}
\int_{\oXeps}|\nabla(\overline u-\overline v)|^pd\mu_\beta\le C\, \|\nabla(\overline u-\overline v)\|_{L^p(\oXeps,\mu_\beta)}\, \|G-H\|_{L^{p'}(Z,\nu)},
\end{align*}
which gives us \eqref{eq:GradientStabilityNeumann 1}.

When $1<p<2$, it follows from the H\"older inequality, \eqref{eq:Ellipticity}, \eqref{eq:7-2, 1}, and Lemma~\ref{lem:GradientBound} that
\begin{align*}
\int_{\oXeps}|\nabla(\overline u-&\overline v)|^pd\mu_\beta\le\left(\int_{\oXeps}|\nabla(\overline u-\overline v)|^2\left(|\nabla\overline u|+|\nabla\overline v|\right)^{p-2}d\mu_\beta\right)^{p/2}\left(\int_{\oXeps}\left(|\nabla\overline u|+|\nabla\overline v|\right)^pd\mu_\beta\right)^{(2-p)/2}\\
&\le C\, \left(\|\nabla(\overline u-\overline v)\|_{L^p(\overline X_\eps,\mu_\beta)}\, \|G-H\|_{L^{p'}(Z,\nu)}\right)^{p/2}\,
   \left(K(f,G)^p+K(f,H)^p\right)^{(2-p)/2},   
\end{align*}
which gives us \eqref{eq:GradientStabilityNeumann 2}.
\end{proof}

\subsection{Complete stability}\label{sub:stable}

We now have the tools needed to prove the full stability question. 

\begin{thm}\label{thm:stable}
Suppose that $(f_k)_k\subset B^{\theta-\sigma/p}_{p,p}(\partial Z,\pi)$ and $(G_k)_k\subset L^{p'}(Z,\nu)$, and
$f\in B^{\theta-\sigma/p}_{p,p}(\partial Z,\pi)$, $G\in L^{p'}(Z,\nu)$ such that $f_k\to f$ and $G_k\to G$ in their
respective function-spaces. Let $u_k$ be a solution to the $(f_k,G_k)$-inhomogeneous Dirichlet problem on $Z$ for each $k\in\N$,
and $u$ a solution to the $(f,G)$-inhomogeneous Dirichlet problem on $Z$. Then $u_k\to u$ in $B^{\theta}_{p,p}(\overline Z,\nu)$.
\end{thm}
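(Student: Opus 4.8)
The strategy is to interpolate between $u_k$ and $u$ through an intermediate solution that separates the perturbation of $G$ from the perturbation of $f$, and then to transport gradient convergence on the uniformized hyperbolic filling back down to $B^\theta_{p,p}(\overline Z,\nu)$. First I would introduce $w_k\in B^\theta_{p,p}(\overline Z,\nu)$, the unique solution (Theorem~\ref{thm:Inhom Existence}) of the $(f_k,G)$-inhomogeneous Dirichlet problem; thus $w_k$ differs from $u_k$ only in the inhomogeneity data and from $u$ only in the boundary data. Since the sequences $(f_k)_k$ and $(G_k)_k$ converge, they are bounded in $B^{\theta-\sigma/p}_{p,p}(\partial Z,\pi)$ and $L^{p'}(Z,\nu)$ respectively, so by Lemma~\ref{lem:GradientBound} there is a constant $M<\infty$, independent of $k$, with $K(f_k,G_k),\,K(f_k,G),\,K(f,G)\le M$.

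Next I would invoke the two one-sided stability estimates. Applying Proposition~\ref{prop:GradientStabilityNeumann} to $u_k$ (solving $(f_k,G_k)$) and $w_k$ (solving $(f_k,G)$), the bound $M$ absorbs the $K$-factors and yields
\[
\int_{\oXeps}|\nabla(\overline u_k-\overline w_k)|^p\,d\mu_\beta\le C(M)\,\|G_k-G\|_{L^{p'}(Z,\nu)}^{\gamma}\longrightarrow 0 ,
\]
with $\gamma=p'$ if $p\ge 2$ and $\gamma=p$ if $1<p<2$; applying Proposition~\ref{prop:GradientStability} to $w_k$ (solving $(f_k,G)$) and $u$ (solving $(f,G)$) gives, similarly,
\[
\int_{\oXeps}|\nabla(\overline w_k-\overline u)|^p\,d\mu_\beta\le C(M)\,\|f_k-f\|_{B^{\theta-\sigma/p}_{p,p}(\partial Z,\pi)}^{\delta}\longrightarrow 0 ,
\]
with $\delta=1$ if $p\ge 2$ and $\delta=p/2$ if $1<p<2$. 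The triangle inequality in $L^p(\oXeps,\mu_\beta)$ then gives $\|\nabla(\overline u_k-\overline u)\|_{L^p(\oXeps,\mu_\beta)}\to 0$.

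It remains to deduce $\|u_k-u\|_{B^\theta_{p,p}(\overline Z,\nu)}\to 0$ from this, i.e.\ to control both the Besov seminorm of $u_k-u$ and $\|u_k-u\|_{L^p(Z,\nu)}$. For the seminorm, property~(5) of $\mathcal{E}_T$ together with the energy-minimality of the $p$-harmonic extension among all $N^{1,p}(\oXeps,\mu_\beta)$-extensions with the prescribed trace (and the fact that $\overline u_k-\overline u$ is such an extension of $u_k-u$, since $T_X(\overline u_k-\overline u)=u_k-u$) gives
\[
\|u_k-u\|^p_{B^\theta_{p,p}(\overline Z,\nu)}\approx\mathcal{E}_T(u_k-u,u_k-u)=\int_{\oXeps}|\nabla\overline{u_k-u}|^p\,d\mu_\beta\le\int_{\oXeps}|\nabla(\overline u_k-\overline u)|^p\,d\mu_\beta\to 0 ,
\]
where $\|\cdot\|^p_{B^\theta_{p,p}(\overline Z,\nu)}$ here denotes the Besov seminorm. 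For the $L^p(Z,\nu)$ part I would split $\overline u_k-\overline u=h_k+g_k$ with $g_k:=E_{X,\partial}(f_k-f)$, so that $\|g_k\|_{N^{1,p}(\oXeps,\mu_\beta)}\lesssim\|f_k-f\|_{B^{\theta-\sigma/p}_{p,p}(\partial Z,\pi)}\to 0$, while, using $T_{X,\partial}=T_Z\circ T_X$ (Remark~\ref{rem:traces-galore}) and $T_{X,\partial}\circ E_{X,\partial}=\Id$, the function $h_k=0$ $\pi$-a.e.\ on $\partial Z$; since $p>\sigma/\theta$ this zero set has positive $\BCap^{\oXeps}_p$-capacity (Remark~\ref{rem:p-large-vs-unique}, \eqref{eq:CapComparison}), so the Maz'ya inequality Theorem~\ref{thm:Mazya} applies exactly as in the proof of Lemma~\ref{lem:GradientBound} and yields $\|h_k\|_{L^p(\oXeps,\mu_\beta)}\lesssim\|\nabla h_k\|_{L^p(\oXeps,\mu_\beta)}\le\|\nabla(\overline u_k-\overline u)\|_{L^p(\oXeps,\mu_\beta)}+\|\nabla g_k\|_{L^p(\oXeps,\mu_\beta)}\to 0$. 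Hence $\overline u_k\to\overline u$ in $N^{1,p}(\oXeps,\mu_\beta)$, and boundedness of $T_X$ gives $\|u_k-u\|_{L^p(Z,\nu)}=\|T_X(\overline u_k-\overline u)\|_{L^p(Z,\nu)}\lesssim\|\overline u_k-\overline u\|_{N^{1,p}(\oXeps,\mu_\beta)}\to 0$, which completes the argument.

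The main obstacle is that there is no single stability estimate handling simultaneous perturbation of the boundary data and the inhomogeneity data — the interplay of nonlinearity, nonlocality, and inhomogeneity forces the two-step route through the intermediate solution $w_k$, and one must check carefully that the data-dependent constants $K(\cdot,\cdot)$ appearing in Propositions~\ref{prop:GradientStability} and~\ref{prop:GradientStabilityNeumann} remain uniformly bounded in $k$. The second delicate point is the descent from gradient convergence on $\oXeps$ back to $B^\theta_{p,p}(\overline Z,\nu)$-convergence: the Maz'ya inequality must be applied to a function vanishing only on a $\pi$-a.e.\ (rather than quasi-everywhere) subset of $\partial Z$, which is legitimate precisely because the standing hypothesis $p>\sigma/\theta$ forces such sets to have positive $\BCap^{\oXeps}_p$-capacity, while the seminorm half rests on comparing the $p$-energy of the linear difference $\overline u_k-\overline u$ with that of the genuine $p$-harmonic extension of $u_k-u$.
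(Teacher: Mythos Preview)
Your proposal is correct and follows essentially the same route as the paper: introduce the intermediate solution $w_k$ to the $(f_k,G)$-problem, apply Propositions~\ref{prop:GradientStabilityNeumann} and~\ref{prop:GradientStability} with the uniform bound on $K(\cdot,\cdot)$, obtain $\nabla\overline u_k\to\nabla\overline u$ in $L^p(\oXeps,\mu_\beta)$, then use Maz'ya on $\overline u_k-\overline u-E_{X,\partial}(f_k-f)$ (your $h_k$, the paper's $(\overline u_k-f_k)-(\overline u-f)$) to upgrade to $N^{1,p}$-convergence and conclude via the boundedness of $T_X$. Your separate treatment of the Besov seminorm via property~(5) and the minimality of the $p$-harmonic extension is correct but redundant, since $N^{1,p}$-convergence of $\overline u_k-\overline u$ together with the boundedness of $T_X:N^{1,p}(\oXeps,\mu_\beta)\to B^\theta_{p,p}(\overline Z,\nu)$ already yields convergence in the full Besov norm.
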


\begin{proof}
Since $(f_k)_k$ and $(G_k)_k$ are Cauchy in their respective function-spaces, it follows that their norms are bounded.
Hence, from the definition of the functional $K$ from~\eqref{eq:GradientBound}, we have that the sequences
$\{K(f_k,G_k\}_k$ and $\{K(f_k,G)\}_k$ are bounded; set
\[
M:=\sup_k\ K(f_k,G_k)+K(f_k,G)+K(f,G).
\]

We will prove this theorem in the case $p\ge 2$, with the corresponding modifications for the case $1<p<2$ from the
two propositions yielding the corresponding results for $1<p<2$; we leave this modification for the reader.

Let $w_k$ be a solution to the $(f_k,G)$-inhomogeneous Dirichlet problem. Such a solution exists by Theorem~\ref{thm:Inhom Existence}. Then by Proposition~\ref{prop:GradientStabilityNeumann}
we have that
\begin{equation}\label{eq:stab1}
\int_{\oXeps}|\nabla(\overline u_k-\overline w_k)|^p\, d\mu_\beta
 \le C\, \|G_k-G\|_{L^{p'}(Z,\nu)}^{p'},
\end{equation}
and from Proposition~\ref{prop:GradientStability} we have that
\begin{align}\label{eq:stab2}
\int_{\oXeps}|\nabla(\overline w_k-\overline u)|^p\, d\mu_\beta
 &\le C\, \left(K(f_k,G)^{p-1}+K(f,G)^{p-1}\right)\, \|f_k-f\|_{B^{\theta-\sigma/p}_{p,p}(\partial Z,\pi)}\notag \\
   &\le 2C\, M^{p-1}\, \|f_k-f\|_{B^{\theta-\sigma/p}_{p,p}(\partial Z,\pi)}.
\end{align}
Combining~\eqref{eq:stab1} and~\eqref{eq:stab2} we see that
\[
\int_{\oXeps}|\nabla(\overline u_k-\overline u)|^p\, d\mu_\beta
 \le 2^p\, C\, (1+M^{p-1})\, \left(\|G_k-G\|_{L^{p'}(Z,\nu)}^{p'}+\|f_k-f\|_{B^{\theta-\sigma/p}_{p,p}(\partial Z,\pi)}\right),
\]
from which we conclude that $\nabla \overline u_k\to \nabla \overline u$ in $L^p(\oXeps,\mu_\beta)$. Applying Theorem~\ref{thm:Mazya} to
$(\overline u_k-f_k)-(\overline u-f)$, we see that
\begin{align*}
\|\overline u_k-\overline u\|_{L^p(\oXeps,\mu_\beta)}
&\le \|(\overline u_k-f_k)-(\overline u-f)\|_{L^p(\oXeps,\mu_\beta)}
+\|f_k-f\|_{L^p(\oXeps,\mu_\beta)}\\
&\le C\, \|\nabla [(\overline u_k-f_k)-(\overline u-f)]\|_{L^p(\oXeps,\mu_\beta)}+\|f_k-f\|_{L^p(\oXeps,\mu_\beta)}\\
&\le C\, \|\nabla (\overline u_k-\overline u)\|_{L^p(\oXeps,\mu_\beta)}
+C\, \|\nabla (f_k-f)\|_{L^p(\oXeps,\mu_\beta)}+\|f_k-f\|_{L^p(\oXeps,\mu_\beta)}\\
&\le C\, (1+M^{p-1})\,  \left(\|G_k-G\|_{L^{p'}(Z,\nu)}^{p'}+\|f_k-f\|_{B^{\theta-\sigma/p}_{p,p}(\partial Z,\pi)}
+\|f_k-f\|_{L^p(\partial Z,\pi)}\right),
\end{align*}
where we have also used the boundedness of the extension operator $E_{X,\partial}$. It follows that $\overline u_k\to \overline u$
in $N^{1,p}(\oXeps,\mu_\beta)$. Now by the continuity of the linear trace operator $T_{X,\partial}$, we see that $u_k\to u$ in
$B^\theta_{p,p}(\overline Z,\nu)$, completing the proof. 
\end{proof}

\section{Regularity}

In this section, we continue to adopt the standing assumptions of the previous sections and discuss regularity results for 
solutions to the inhomogeneous and homogeneous Dirichlet problem.  For the inhomogeneous problem, these results follow 
mutatis mutandis from earlier results, namely \cite{KS,MalShan,CKKSS}, with slight modifications.  For this reason, we 
do not include the full proofs, only descriptions of the necessary adaptations to our setting.  For the homogeneous 
problem, we relate solutions to $p$-harmonic functions in the hyperbolic filling to obtain a Kellogg-type property in Theorem~\ref{thm:HomogeneousKellogg}.  The \emph{Kellogg property} states that 
at capacity-a.e.~boundary point, if the Dirichlet boundary data is continuous, then the corresponding solution to the
Dirichlet problem has a limit to that point, from within the domain, and this limit agrees with the boundary data at that point.
For more on the Kellogg property see for instance~\cite{Lan, Bre, HedWol, Kil}, 
and for the metric setting see~\cite{BjMacSh, BBS-1,BjBjLat}.  For the nonlocal setting with $G=0$ in the Euclidean 
setting see~\cite{JaBjo, KLL}.
A criterion, called the \emph{Wiener criterion}, identifies points on the boundary
where such continuous limits exist when the problem is associated with elliptic PDEs, see for instance~\cite{Maz-1970} for the 
Euclidean setting 
and~\cite[Section~11.7]{BB} for the metric setting. Since $\overline{X}_\eps$ falls within the framework of~\cite{BB},
we have access to the Wiener criterion for the domain $\Om:=\overline{X}_\eps\setminus\partial Z$.

\subsection{Interior regularity}

In \cite{CKKSS}, the authors considered a problem closely related to the $(f,G)$-inhomogeneous Dirichlet problem
involving the operator $\mathcal{E}_T$.  However, this problem did not involve Dirichlet boundary data.  Precisely, 
given $G\in L^{p'}(Z,\nu)$ with $\int_Z G\, d\nu=0$, the problem considered in~\cite{CKKSS} was to find 
$u\in B^\theta_{p,p}(\overline Z,\nu)$
satisfying
\begin{align}\label{eq:CKKSS problem}
	\mathcal{E}_T(u,v)=&\int_{\overline{Z}} G\, v\, d\nu \text{ whenever }v\in B^\theta_{p,p}(\overline Z,\nu).
\end{align}
There, it was shown that solutions $u$ to this problem are locally bounded.  That is, if $z\in\overline Z$ and $0<R\le\diam(\overline Z)$, then 
\[
\|u\|_{L^\infty(B(z,R),\nu)}\le d,
\]
where $d>0$ is a constant depending on $\theta$, $p$, the doubling constant of $\nu$, as well as $R$, $B(z,R)$, and the quantities
\begin{equation}\label{eq:BoundQuantities}
	u(0,R):=\left(\fint_{B(z,R)}u_+^pd\mu_\beta\right)^{1/p},\quad\psi(0,R):=\fint_{B(z,R)}|G|u_+\,d\nu,
\end{equation}
see \cite[Section~8]{CKKSS}, in particular \cite[Lemmas~8.6,\,8.11]{CKKSS}.  The arguments there modify the proofs 
of~\cite[Lemma~5.10,\, Theorem~5.2]{MalShan}, where such local $L^\infty$-bounds were obtained for 
inhomogeneous data $G\in L^\infty(Z,\nu)$ by the De Giorgi method.  

Since the problem we consider in this paper involves Dirichlet boundary data $f\in B^{\theta-\sigma/p}_{p,p}(\partial Z,\pi)$ in addition to inhomogeneous data $G\in L^{p'}(Z,\nu)$, we are not able to attain local $L^\infty$-bounds on all of $\overline Z$;  we are only able to attain such bounds on balls in $Z$ which stay away from $\partial Z$.  When considering only such balls, however, the exact 
arguments from~\cite{MalShan} and~\cite{CKKSS} provide the following lemma. 

\begin{lem}\label{lem:LocalBoundedness}
	Let $f\in B^{\theta-\sigma/p}_{p,p}(\partial Z,\pi)$, let $G\in L^{p'}(Z,\nu)$, and let $u\in B^\theta_{p,p}(\overline Z,\nu)$ be a solution to the $(f,G)$-inhomogeneous Dirichlet problem.  If $z_0\in Z$ and $R_0>0$ is such that $B(z_0,2R_0)\cap\partial Z=\varnothing$, then 
	\[
	\|u\|_{L^\infty(B(z_0,R_0),\nu)}\le d,
	\]
	where $d>0$ is a constant depending only on $\theta$, $p$, the doubling constant of $\nu$ as well as $R_0$, $B(z_0,R_0)$, and the quantities $u(0,R_0)$ and $\psi(0,R_0)$ as given by \eqref{eq:BoundQuantities}.
\end{lem}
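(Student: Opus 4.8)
The plan is to reduce the statement to the already-known local boundedness results of~\cite{MalShan, CKKSS} by observing that, away from $\partial Z$, the solution $u$ to the $(f,G)$-inhomogeneous Dirichlet problem satisfies exactly the same local weak equation as a solution to the boundary-data-free problem~\eqref{eq:CKKSS problem}. Concretely, fix $z_0\in Z$ and $R_0>0$ with $B(z_0,2R_0)\cap\partial Z=\varnothing$, and let $\overline u$ be the $p$-harmonic extension of $u$ to $X_\eps$. The first step is to record that for any test function $v\in B^\theta_{p,p}(\overline Z,\nu)$ with $\supt(v)\Subset B(z_0,2R_0)$, one has $T_Zv=0$ $\pi$-a.e.\ on $\partial Z$ (indeed $v$ vanishes near $\partial Z$), so $v$ is an admissible test function in Definition~\ref{def:Dirichlet1}, and hence $\mathcal{E}_T(u,v)=\int_Z G\,v\,d\nu$. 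Thus $\overline u$ is, in the interior ball, a solution to the same localized Euler--Lagrange equation that the solutions of~\cite{CKKSS} satisfy; the only difference between our setting and that of~\cite{CKKSS} is the global side condition ($\int_Z G\,d\nu=0$ there, boundary data here), but neither condition enters the De Giorgi iteration, which is purely local.

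Next, I would transplant the De Giorgi / Moser-type iteration of~\cite[Lemma~5.10, Theorem~5.2]{MalShan}, as adapted in~\cite[Lemmas~8.6, 8.11]{CKKSS}, to the ball $B(z_0,2R_0)$. The key inputs are: (i) the Caccioppoli-type estimate obtained by testing the weak equation against $\eta^p (u-\kappa)_+$ for cutoffs $\eta$ supported in $B(z_0,2R_0)$ and levels $\kappa$; here one uses that $\mathcal{E}_T(u,\cdot)$ is computed via the $p$-energy of $\overline u$ on $X_\eps$, together with property~(5) of $\mathcal{E}_T$ comparing this energy to the Besov seminorm, and the doubling property of $\nu$; (ii) the self-improving Sobolev--Poincar\'e inequality available on the Besov scale for doubling $\nu$ (or, equivalently, transferred through the trace/extension operators $T_X, E_X$ of Theorem~\ref{thm:HypFillThm} to the $N^{1,p}$-theory on $(\overline X_\eps,\mu_\beta)$, where a $(1,1)$-Poincar\'e inequality holds); (iii) the bound on the inhomogeneity term $\int |G|(u-\kappa)_+\,d\nu$ via H\"older's inequality, which is what forces the dependence on $\psi(0,R_0)$ and on $\|G\|_{L^{p'}}$ through the quantities in~\eqref{eq:BoundQuantities}. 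Iterating over a shrinking sequence of balls $B(z_0,(1+2^{-j})R_0)$ then yields $\|u_+\|_{L^\infty(B(z_0,R_0))}\le d$ with $d$ depending only on the listed data; applying the same argument to $-u$ handles $u_-$, giving the full $L^\infty$ bound.

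The main obstacle, and the only place where care beyond citing~\cite{MalShan, CKKSS} is needed, is verifying that the Caccioppoli estimate is genuinely local despite the nonlocal and nonlinear nature of $\mathcal{E}_T$: because $\mathcal{E}_T(u,v)$ is defined through the \emph{global} $p$-harmonic extension $\overline u$ on all of $X_\eps$, one must check that testing against a function $v$ supported in $B(z_0,2R_0)\cap Z$ only probes $\overline u$ on a corresponding region of the filling, and that the energy comparison and Caccioppoli inequality can be closed using only data on $B(z_0,2R_0)$. This is exactly the point addressed in~\cite[Section~8]{CKKSS}, where the relevant estimates are phrased in terms of the local quantities~\eqref{eq:BoundQuantities}; since our test functions supported away from $\partial Z$ form a subclass of those used there, the argument applies verbatim. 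Hence the lemma follows, and I would simply state that the proof is identical to~\cite[Lemmas~8.6, 8.11]{CKKSS} (which in turn follow~\cite[Lemma~5.10, Theorem~5.2]{MalShan}), restricted to interior balls, with the only modification being the replacement of the global side condition by the observation above that interior test functions have zero trace on $\partial Z$.
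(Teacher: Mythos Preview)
Your proposal is correct and matches the paper's treatment: the paper does not give a standalone proof but simply observes (in the paragraph preceding the lemma and the sentence following it) that the exact arguments of \cite{MalShan} and \cite[Section~8]{CKKSS} apply once one restricts to balls $B(z_0,2R_0)$ disjoint from $\partial Z$, noting that the assumption $\int_Z G\,d\nu=0$ in those references is used only for existence/uniqueness and plays no role in the De Giorgi iteration. Your additional remark that test functions supported in $B(z_0,2R_0)$ automatically have zero trace on $\partial Z$, and hence are admissible in Definition~\ref{def:Dirichlet1}, makes explicit the one point the paper leaves implicit.
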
   
\noindent We note that while \cite{MalShan} and \cite{CKKSS} assume that $\int_{\overline Z} G\,d\nu=0$, this 
assumption is used primarily to obtain existence and uniqueness of solutions and plays no role in the regularity results therein. 

To study H\"older regularity of solutions, we note that when $p>Q$, where $Q$ is as in \eqref{eq:LMB} with respect to the 
doubling measure $\mu_\beta$, we know that functions in $N^{1,p}(\oXeps,\mu_\beta)$, and hence functions in 
$B^\theta_{p,p}(\overline Z,\nu)$, are $(1-Q/p)$-H\"older continuous, see for example~\cite[Theorem~5.1(3)]{HaK}
or~\cite[Theorem~9.2.14]{HKST}.  As such, in the following discussion, we consider only the case $1<p\le Q$.  

In \cite[Theorem~1.6]{CKKSS}, the authors showed that solutions to the problem given by \eqref{eq:CKKSS problem} are 
locally H\"older continuous in all of $\overline Z$.  As in the case of the above local boundedness estimates, 
we are able to use the argument there verbatim to prove Theorem~\ref{thm:Holder} below, with the same H\"older exponent, provided we consider only balls in $Z$ which are far from $\partial Z$.  Again, this is due to the presence of Dirichlet data on $\partial Z$ in our problem.

If $\Omega$ is an open subset of a complete doubling metric measure space supporting a $(1,p)$-Poincar\'e inequality, 
then any $v\in N^{1,p}_\loc(\Omega)$ which is $p$-harmonic in $\Omega$ is locally $\alpha$-H\"older continuous in $\Omega$, 
where $0<\alpha\le 1$ is a constant depending only on the doubling constant and constants associated with the 
Poincar\'e inequality, see~\cite[Theorem~5.2]{KS}. 
The constant $0<\alpha\le 1$ 
appearing in Theorem~\ref{thm:Holder} below is the local H\"older regularity exponent for $p$-harmonic functions
on domains in the metric measure space
$(\overline{X}_\eps,d_\eps,\mu_\beta)$, obtained from~\cite{KS}.
As such, $\alpha$ depends only on $\theta$, $p$, the doubling constant of $\nu$, and the parameters chosen 
in the construction of $(\oXeps,d_\eps,\mu_\beta)$. 

\begin{thm}\label{thm:Holder}
	Let $\max\{1,\sigma/\theta\}<p\le Q$, where $Q$ is the lower mass bound for $\mu_\beta$, as in~\eqref{eq:LMB}.  
	Let $f\in B^{\theta-\sigma/p}_{p,p}(\partial Z,\pi)$ and let $G\in L^q(Z,\nu)\cap L^{p'}(Z,\nu)$ with 
	\[
	\frac{Q-p(1-\theta)}{p\theta}<q.
	\]
	If $u\in B^\theta_{p,p}(\overline Z,\nu)$ is a solution to the $(f,G)$-inhomogeneous Dirichlet problem, then for all $z_0\in Z$, $R_0>0$ such that $B(z_0,2R_0)\cap\partial Z=\varnothing$, and for all $z,w\in B(z_0,R_0)\cap Z$, we have that 
	\[
	|u(z)-u(w)|\le Cd(z,w)^{1-\gamma},
	\] 
	where 
	\[
	\gamma:=\max\left\{1-\alpha,\frac{p(1-\theta)(q-1)+Q}{pq}\right\}.
	\]
	That is, $u$ is locally $(1-\gamma)$-H\"older continuous in $Z$.  Here, the constant $C$ depends on $\theta$, $p$, the doubling constant of $\nu$, as well as on $R_0$, $B(z_0,R_0)$, and the constant $d$ given by Lemma~\ref{lem:LocalBoundedness}.
\end{thm}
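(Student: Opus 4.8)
The plan is to reduce Theorem~\ref{thm:Holder} to the interior regularity theory developed in \cite{CKKSS} (following \cite{MalShan, KS}; see also \cite{CGKS}) for the operator $\mathcal{E}_T$ \emph{without} boundary data. The one new ingredient is a localization observation: near a ball that avoids $\partial Z$, the Dirichlet datum $f$ does not enter the equation. Concretely, if $v\in B^\theta_{p,p}(\overline Z,\nu)$ has support contained in $B(z_0,2R_0)$ and $B(z_0,2R_0)\cap\partial Z=\varnothing$, then $v$ vanishes on a neighborhood of $\partial Z$, so $T_Zv=0$ $\pi$-a.e.\ on $\partial Z$; hence $v$ is an admissible test function and $\mathcal{E}_T(u,v)=\int_Z Gv\,d\nu$. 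Thus $u$ is, inside $B(z_0,2R_0)$, a local solution of precisely the equation studied in \cite{CKKSS}, and, as remarked after Lemma~\ref{lem:LocalBoundedness}, the mean-zero condition on $G$ imposed there is irrelevant to the regularity estimates.

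Granting the local boundedness $\|u\|_{L^\infty(B(z_0,R_0),\nu)}\le d$ of Lemma~\ref{lem:LocalBoundedness}, I would then run the De Giorgi oscillation-decay iteration of \cite[Theorem~1.6]{CKKSS} (which in turn follows \cite[Lemma~5.10, Theorem~5.2]{MalShan}) on concentric balls inside $B(z_0,R_0)$. Each test function used in that argument is a truncation $\eta\,(u-k)_{\pm}$ with $\eta$ a Lipschitz cutoff supported in the ball under consideration, hence supported away from $\partial Z$; by the localization above it is an admissible competitor for the constrained problem here, so the argument transfers with no change. The resulting decay of oscillation is governed by two effects: comparison with a $p$-harmonic function on $X_\eps$ --- which by \cite[Theorem~5.2]{KS} is locally $\alpha$-H\"older continuous, contributing the H\"older defect $1-\alpha$ --- and control of the inhomogeneity term $\int_B |G|\,|u-k|\,d\nu$, which, via H\"older's inequality with exponent $q$, the bound $d$, and the codimensional mass estimate $\nu(B(z,r))\gtrsim r^{Q-\beta/\eps}$ with $\beta/\eps=p(1-\theta)$, decays at the rate recorded by $\tfrac{p(1-\theta)(q-1)+Q}{pq}$. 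Taking the larger of these two defects produces $\gamma$ and the asserted estimate $|u(z)-u(w)|\le C\,d(z,w)^{1-\gamma}$ on $B(z_0,R_0)\cap Z$, with $C$ depending only on the listed data through \cite{CKKSS, KS} and the constant $d$. The hypothesis $q>\tfrac{Q-p(1-\theta)}{p\theta}$ is exactly what makes $1-\gamma>0$.

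The main obstacle is not a single estimate but making ``the argument of \cite{CKKSS} applies verbatim'' rigorous: one must verify that every Caccioppoli inequality and every iteration step in \cite{CKKSS, MalShan} invokes only test functions supported in the ball being treated, so that each is admissible for the \emph{constrained} (zero-trace) problem considered here, not merely for the unconstrained problem of \cite{CKKSS}. This is built into the De Giorgi cutoff construction, but it is the step that must be checked. A secondary, purely bookkeeping, matter is merging the $p$-harmonic defect $1-\alpha$ and the inhomogeneity defect into the single exponent $\gamma$ during the iteration. Finally, as already noted in the text, the range $p>Q$ is trivial, since there $N^{1,p}(\overline X_\eps,\mu_\beta)$ --- and hence $B^\theta_{p,p}(\overline Z,\nu)$ --- consists of H\"older continuous functions, so only $\max\{1,\sigma/\theta\}<p\le Q$ requires the above.
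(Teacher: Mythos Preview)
Your proposal is correct and matches the paper's own approach: the paper simply notes that the proof of \cite[Theorem~1.6]{CKKSS} applies verbatim once one restricts to balls $B(z_0,2R_0)$ disjoint from $\partial Z$, since then the Dirichlet datum does not enter and every cutoff test function used in the De Giorgi argument has zero trace and is therefore admissible. Your localization observation and the accounting for the two sources of the H\"older defect are exactly what underlies that claim.
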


The requirement that $G\in L^q(Z)$ is automatically satisfied if $p'>(Q-\beta/\eps)/(p-\beta/\eps)$, with the choice then of $q=p'$. 
The estimates for $q$ that guarantees the above local H\"older regularity of the solution $u$ agrees with that of~\cite[Theorem~1.2]{CSt}
when $p=2$,
where $n$, the dimension of the Euclidean space, corresponds to the lower mass bound dimension $Q-\beta/\eps$ of the measure $\nu$,
and the fractional exponent $s$ of~\cite{CSt} corresponds to $\theta=1-\beta/\eps p$.

\subsection{Kellogg-type property for the homogeneous problem}
When considering the problem given by Definition~\ref{def:Dirichlet1} with $G\equiv 0$, that is, the 
homogeneous Dirichlet problem with boundary data $f\in B^{\theta-\sigma/p}_{p,p}(\partial Z,\pi)$, we 
are able to obtain Kellogg-type results in Theorem~\ref{thm:HomogeneousKellogg} below.  
Note that here, $p$ is not required to have any relationship with $Q$; we continue to adopt the standing 
assumptions from the previous section, in particular assuming that $p>\max\{1,\sigma/\theta\}$.  
We first relate solutions to this homogeneous Dirichlet problem to solutions to the Dirichlet 
problem for $p$-harmonic functions on $\Om:=\overline{X}_\eps\setminus\partial Z$. 
Note that $\partial \Om=\partial Z$.

\begin{lem}\label{lem:p-harmonic solutions}
	Let $f\in B^{\theta-\sigma/p}_{p,p}(\partial Z,\pi)$, and denote $\Omega:=\oXeps\setminus\partial Z$.  If $u\in B^\theta_{p,p}(\overline Z,\nu)$ is a solution to the homogeneous Dirichlet problem with boundary data $f$, then the $p$-harmonic extension $\overline u$ of $u$ to $\oXeps$ is a solution to the Dirichlet problem for $p$-harmonic functions in $\Omega$ with boundary data $f$.  That is, 
	\begin{align}
		\int_\Omega|\nabla\overline u|^{p-2}\langle\nabla \overline u,\nabla v\rangle d\mu_\beta=0&\quad\text{for all }v\in N^{1,p}(\oXeps,\mu_\beta)\text{ with }T_{X,\partial}v=0\;\BCap^{\oXeps}_p\text{-q.e.\ on }\partial\Omega,\label{eq:Om-1}\\
		T_{X,\partial}\overline u=f&\quad\pi\text{-a.e.\ on }\partial\Omega. \label{eq:Om-2}
	\end{align}
	Moreover, by modifying $f$ on a $\pi$-null set if necessary, we also have that for $\BCap_p^{\overline{X}_\eps}$-q.e.~$\xi\in\partial\Om$,
	\[
	T_{X,\partial}\overline u(\xi)=f(\xi).
	\]
\end{lem}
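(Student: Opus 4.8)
The plan is to produce the required representative of $f$ by exploiting that $\overline u$, being a Newton--Sobolev function on $\overline X_\eps$, has a quasicontinuous representative whose boundary Lebesgue values already compute $T_{X,\partial}\overline u$, and then simply to declare the representative of $f$ to be this boundary value.

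First I would recall that $(\overline X_\eps,d_\eps,\mu_\beta)$ is complete, doubling, and supports a $(1,1)$-Poincar\'e inequality by Theorem~\ref{thm:HypFillThm}, so $\overline u$ is quasicontinuous and, by~\cite{KKST}, $\BCap_p^{\oXeps}$-q.e.\ point of $\overline X_\eps$ is a Lebesgue point of $\overline u$; in particular this holds for $\BCap_p^{\oXeps}$-q.e.\ $\xi\in\partial Z=\partial\Omega$. For each such $\xi$ I set $\widetilde f(\xi):=\overline u(\xi)=\lim_{r\to 0^+}\fint_{B(\xi,r)}\overline u\,d\mu_\beta$, which defines a function $\widetilde f$ on $\partial Z$ up to a $\BCap_p^{\oXeps}$-null set.

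Next I would check that $\widetilde f=f$ $\pi$-a.e.\ on $\partial Z$, so that passing from $f$ to $\widetilde f$ is a modification on a $\pi$-null set. Since $p>\max\{1,\sigma/\theta\}$, Remark~\ref{rem:p-large-vs-unique} together with~\eqref{eq:CapComparison} gives that $\BCap_p^{\oXeps}$-null subsets of $\partial Z$ are $\pi$-null, so $\widetilde f(\xi)$ is defined for $\pi$-a.e.\ $\xi$. From the construction of $T_{X,\partial}$ in~\cite{Maly,GS} (the analogue of~\eqref{eq:Tx-def} and of condition~(b) in the proof of Lemma~\ref{lem:traceZ}), for $\pi$-a.e.\ $\xi\in\partial Z$ one has $\lim_{r\to 0^+}\fint_{B(\xi,r)}|\overline u-T_{X,\partial}\overline u(\xi)|\,d\mu_\beta=0$; hence at $\pi$-a.e.\ $\xi$ both averaging limits exist and $T_{X,\partial}\overline u(\xi)=\overline u(\xi)=\widetilde f(\xi)$. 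On the other hand, since $\overline u$ is the $p$-harmonic extension of $u$ we have $T_X\overline u=u$, and since $u$ solves the homogeneous Dirichlet problem with data $f$ we have $T_Zu=f$ $\pi$-a.e.; by Remark~\ref{rem:traces-galore}, $T_{X,\partial}=T_Z\circ T_X$ on $N^{1,p}(\overline X_\eps,\mu_\beta)$, so $T_{X,\partial}\overline u=T_Zu=f$ $\pi$-a.e.\ (this is exactly~\eqref{eq:Om-2}). Comparing the two, $\widetilde f=f$ $\pi$-a.e.

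Finally, I would replace $f$ by $\widetilde f$; this is a legitimate representative of the class $f\in B^{\theta-\sigma/p}_{p,p}(\partial Z,\pi)$ since the two differ only on a $\pi$-null set, and with this choice, for $\BCap_p^{\oXeps}$-q.e.\ $\xi\in\partial\Omega$ the value $T_{X,\partial}\overline u(\xi)$ — read as the $\mu_\beta$-averaging limit of the quasicontinuous representative of $\overline u$, which exists $\BCap_p^{\oXeps}$-q.e.\ precisely because $\overline u\in N^{1,p}$ — equals $\overline u(\xi)=\widetilde f(\xi)=f(\xi)$. The one point requiring care, and the only real obstacle, is fixing this interpretation of the pointwise symbol $T_{X,\partial}\overline u(\xi)$ in the conclusion: once it is agreed that it denotes the quasicontinuous boundary value, the statement reduces to matching that notion with the $\pi$-a.e.\ defined averaged trace, which is what the argument above does.
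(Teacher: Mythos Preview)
Your treatment of~\eqref{eq:Om-2} and of the ``Moreover'' clause is correct and essentially identical to the paper's argument (you give more detail than the paper, which simply observes that $\BCap_p^{\oXeps}$-q.e.\ boundary point is a Lebesgue point of $\overline u$ and then redefines $f$ accordingly). Your identification of the one subtle point---fixing the interpretation of $T_{X,\partial}\overline u(\xi)$ as the quasicontinuous boundary value---is well taken.

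However, you have entirely omitted the verification of~\eqref{eq:Om-1}, and this is not automatic. The fact that $\overline u$ is the $p$-harmonic extension of $u$ only tells you that $\int_{X_\eps}|\nabla\overline u|^{p-2}\langle\nabla\overline u,\nabla v\rangle\,d\mu_\beta=0$ for $v$ with $T_Xv=0$ on all of $\overline Z$, whereas~\eqref{eq:Om-1} demands this for the larger class of $v$ with $T_{X,\partial}v=0$ only on $\partial Z=\partial\Omega$. Bridging this gap uses that $u$ solves the \emph{homogeneous Dirichlet problem}: for such $v$, set $w:=T_Xv$; by Remark~\ref{rem:traces-galore} one has $T_Zw=T_{X,\partial}v=0$ $\BCap_p^{\oXeps}$-q.e.\ (hence $\pi$-a.e.), so $\mathcal{E}_T(u,w)=0$, and then Remark~\ref{rem:Et-2nd-trace} lets you compute $\mathcal{E}_T(u,w)$ using $v$ itself as the extension of $w$, yielding~\eqref{eq:Om-1}. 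This is exactly what the paper records in its first paragraph; you should include it.
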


\begin{proof}
	By our construction of the solution to the $(f,0)$-inhomogeneous Dirichlet problem and by the uniqueness of the solution, we know 
	that~\eqref{eq:Om-1} is satisfied whenever $v\in N^{1,p}(\overline X_\eps,\mu_\beta)$ with $T_{X,\partial} v=0$; here we have
	also used Remarks~\ref{rem:Et-2nd-trace} and~\ref{rem:traces-galore}. 
	We also have the validity of~\eqref{eq:Om-2} by the construction of $\overline u$,
	thanks to Remark~\ref{rem:traces-galore}.
	
	As $\overline u\in N^{1,p}(\oXeps,\mu_\beta)$ and since $\mu_\beta$ is doubling and supports a $(1,1)$-Poincar\'e inequality, it follows that $\BCap^{\oXeps}_p$-q.e.\ $\xi\in\partial Z$ is a Lebesgue point of $\overline u$, see \cite[Theorem~9.2.8]{HKST} for example.  Hence, we can modify $f$ on a set of $\pi$-measure zero so that $T_{X,\partial}\overline u=f$ $\BCap^{\oXeps}_p$-q.e.\ on $\partial Z$.\qedhere
\end{proof}

The rest of this section is devoted to the matter of boundary continuity when the Dirichlet data is continuous. For this, we assume that
$\partial Z$ is uniformly perfect (as in Definition~\ref{def:unif-perf}). 
We aim to show that every point on $\partial Z$ satisfies the Wiener criterion with respect to the domain 
$\Om=\oXeps\setminus\partial Z$, see for example \cite[Chapter~11.4]{BB}.  To do so, we first obtain the following capacity estimates: 

\begin{lem}\label{lem:vcap estimates}
	Assume that $\partial Z$ is $K$-uniformly perfect for some $K\ge 2$.  Let $z_0\in\partial Z$ and let $0<r<\diam(\partial Z)/(4K^3)$.  Then, 
	\begin{equation}\label{eq:vcap1}
		\vcapXp(B(z_0,r),\,B(z_0,2r))\lesssim\frac{\nu(B(z_0,r))}{r^{\theta p}},
	\end{equation}
	and
	\begin{equation}\label{eq:vcap2}
		\vcapXp(B(z_0,r)\cap\partial Z,\,B(z_0,2r))\gtrsim\frac{\nu(B(z_0,r))}{r^{\theta p}},
	\end{equation}
	with the comparison constants depending only on $K$, $p$, $\theta$, and the doubling constant of $\nu$.
\end{lem}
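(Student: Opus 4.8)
The plan is to prove the two estimates separately, using throughout the reduction $\nu(B(z_0,r))/r^{\theta p}\simeq\mu_\beta(B(z_0,r))/r^{p}$, which follows from Theorem~\ref{thm:HypFillThm}(3) (so that $\nu(B(z_0,r))\simeq\mu_\beta(B(z_0,r))/r^{\beta/\eps}$) together with $\beta/\eps=p(1-\theta)$, whence $\beta/\eps-p=-\theta p$. For the upper bound \eqref{eq:vcap1} it then suffices to exhibit one competitor: take $u(x):=\max\{0,\min\{1,2-d_\eps(x,z_0)/r\}\}$, which equals $1$ on $B(z_0,r)$, vanishes outside $B(z_0,2r)$, is $(1/r)$-Lipschitz, and hence lies in $N^{1,p}(\oXeps,\mu_\beta)$ with upper gradient $r^{-1}\chi_{B(z_0,2r)}$; thus $\vcapXp(B(z_0,r),B(z_0,2r))\le r^{-p}\mu_\beta(B(z_0,2r))\lesssim r^{-p}\mu_\beta(B(z_0,r))$ by the doubling property, and the reduction finishes the estimate.

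For the lower bound \eqref{eq:vcap2} I would push the competitor to the boundary via the trace operator $T_{X,\partial}\colon N^{1,p}(\oXeps,\mu_\beta)\to B^{\theta-\sigma/p}_{p,p}(\partial Z,\pi)$ of Lemma~\ref{lem:traceZ}. Let $u$ be admissible for $\vcapXp(B(z_0,r)\cap\partial Z,B(z_0,2r))$; truncating, I may assume $0\le u\le 1$, that $u=1$ $\BCap_p^{\oXeps}$-q.e.\ on $E:=B(z_0,r)\cap\partial Z$, and that $u=0$ on $\oXeps\setminus B(z_0,2r)$. Since $p>\sigma/\theta$ forces $s:=\sigma+\beta/\eps<p$, $\BCap^{\oXeps}_p$-null subsets of $\partial Z$ are $\pi$-null (as at the end of the proof of Proposition~\ref{prop:1}, via \cite{GKS}); combined with the Lebesgue-point property of Newton–Sobolev functions (\cite[Theorem~9.2.8]{HKST}), this yields $T_{X,\partial}u=u$ $\BCap_p^{\oXeps}$-q.e.\ on $\partial Z$, so $T_{X,\partial}u=1$ $\pi$-a.e.\ on $E$, while $T_{X,\partial}u=0$ $\pi$-a.e.\ on $\partial Z\setminus B(z_0,2r)$ because $u$ vanishes on a neighborhood of every such point. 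By the energy boundedness of $T_{X,\partial}$ — the analogue for $T_{X,\partial}$ of the ``boundedness in energy'' of $T_X$ in Theorem~\ref{thm:HypFillThm}(4), available from \cite{GS,Maly} — one has $\|T_{X,\partial}u\|_{B^{\theta-\sigma/p}_{p,p}(\partial Z,\pi)}^p\lesssim\int_{\oXeps}g_u^p\,d\mu_\beta$, so it is enough to bound the Besov seminorm of $T_{X,\partial}u$ from below by $\nu(B(z_0,r))/r^{\theta p}$.

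To do this I would restrict the double Besov integral to $x\in E$ and $y$ in an annular shell $A:=(B(z_0,C_2r)\setminus B(z_0,2r))\cap\partial Z$, with $C_2$ a constant depending only on $K$ and the doubling constant of $\nu$: for $\pi\times\pi$-a.e.\ such $(x,y)$ the numerator $|T_{X,\partial}u(y)-T_{X,\partial}u(x)|^p$ equals $1$, while $d_\eps(x,y)\simeq r$ and $\pi(B(x,d_\eps(x,y)))\simeq\pi(B(z_0,r))$ by doubling of $\pi$, giving
\[
\|T_{X,\partial}u\|_{B^{\theta-\sigma/p}_{p,p}(\partial Z,\pi)}^p\ \gtrsim\ \frac{\pi(E)\,\pi(A)}{r^{\theta p-\sigma}\,\pi(B(z_0,r))}.
\]
Here $\pi(E)=\pi(B(z_0,r))$, and the one place uniform perfectness is used is the claim $\pi(A)\gtrsim\pi(B(z_0,r))$: $K$-uniform perfectness of $\partial Z$ together with the doubling of $\pi$ yields reverse doubling for $\pi$ on $\partial Z$, and the hypothesis $r<\diam(\partial Z)/(4K^3)$ guarantees that all the balls involved have radii below the scale at which reverse doubling applies, so that a shell of the above form carries a fixed proportion of the mass of $B(z_0,r)$. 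Consequently $\|T_{X,\partial}u\|_{B^{\theta-\sigma/p}_{p,p}(\partial Z,\pi)}^p\gtrsim\pi(B(z_0,r))\,r^{\sigma-\theta p}\simeq\nu(B(z_0,r))\,r^{-\sigma}\,r^{\sigma-\theta p}=\nu(B(z_0,r))/r^{\theta p}$, using the $\sigma$-codimensionality of $\pi$ with respect to $\nu$; taking the infimum over admissible $u$ gives \eqref{eq:vcap2}.

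I expect the main obstacle to be the bookkeeping around the trace in the lower bound: checking that an admissible $u$ really has boundary trace $1$ $\pi$-a.e.\ on $E$ and $0$ $\pi$-a.e.\ off $B(z_0,2r)$ (quasicontinuity plus ``$\BCap_p^{\oXeps}$-null $\Rightarrow$ $\pi$-null on $\partial Z$'', which needs $s<p$), and invoking the \emph{energy} boundedness of $T_{X,\partial}$ rather than merely its full-norm boundedness — the latter is essential, because the $L^p$-part of the trace norm alone would only yield a lower bound of order $\nu(B(z_0,r))/r^{\sigma}$, which is genuinely weaker than the required $\nu(B(z_0,r))/r^{\theta p}$ precisely since $\theta p>\sigma$. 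The uniform perfectness hypothesis enters only through the reverse-doubling step, which is also the sole place $K$ (and the factor $K^3$) is used.
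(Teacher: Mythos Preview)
Your proof is correct, and for the upper bound \eqref{eq:vcap1} it is essentially identical to the paper's: both test with a $(1/r)$-Lipschitz cutoff and use the codimensional relation $\mu_\beta(B(z_0,r))\simeq r^{\beta/\eps}\nu(B(z_0,r))$ together with $\beta/\eps=p(1-\theta)$.

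For the lower bound \eqref{eq:vcap2} your route is genuinely different. The paper stays entirely in $\oXeps$: it shows that both $E=B(z_0,r)\cap\partial Z$ and an annular set $F=(B(z_0,2K^3r)\setminus B(z_0,2r))\cap\partial Z$ carry codimension-$(\beta/\eps+\sigma)$ Hausdorff content at least $\mu_\beta(B(z_0,2K^3r))/(2K^3r)^{\beta/\eps+\sigma}$, and then invokes an external capacity--content lemma, namely \cite[Lemma~3.1]{EGKS}, to convert this into a lower bound on $\int g_u^p\,d\mu_\beta$ for any admissible $u$. Uniform perfectness is used exactly once, to locate a point $\zeta\in\partial Z$ in the annulus and hence guarantee $F$ has the required content. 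Your argument instead pushes an admissible $u$ to $\partial Z$ via $T_{X,\partial}$ and bounds the Besov seminorm of the trace from below by a direct double-integral over $E\times A$; uniform perfectness enters through reverse doubling of $\pi$ to secure $\pi(A)\gtrsim\pi(B(z_0,r))$. Your approach trades the black-box lemma from \cite{EGKS} for the energy (seminorm-to-seminorm) boundedness of $T_{X,\partial}$ from \cite{GS,Maly}; both are legitimate external inputs, and your computation on $\partial Z$ is arguably more transparent once that energy bound is granted. One minor point: to be safe take $A$ to be the \emph{open} annulus $(B(z_0,C_2r)\setminus\overline{B(z_0,2r)})\cap\partial Z$, so that $u$ genuinely vanishes on a neighborhood of every $y\in A$ and $T_{X,\partial}u(y)=0$ for \emph{every} such $y$; the reverse-doubling estimate is unaffected by this change.
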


\begin{proof}
	Consider the function $u\in N^{1,p}(\oXeps,\mu_\beta)$ given by 
	\[
	u(x):=(1-(\dist(x,z_0)/r-1)_+)\chi_{B(z_0,2r)}(x),
	\]
	which satisfies $u=1$ on $B(z_0,r)$ and $u=0$ in $\oXeps\setminus B(z_0,2r)$.   Since $u$ is $1/r$-Lipschitz in 
	$\overline{X}_\eps$ with upper gradient $g_u\le \tfrac{1}{r}\, \chi_{B(z_0,2r)\setminus B(z_0,r)}$,
	by the doubling property of $\mu_\beta$ and Theorem~\ref{thm:HypFillThm} we have that 
	\begin{align*}
		\vcapXp(B(z_0,r),B(z_0,2r))\le\int_{\oXeps}\frac{1}{r^p}\chi_{B(z_0,2r)\setminus B(z_0,r)}d\mu_\beta&\lesssim\frac{\mu_\beta(B(z_0,r))}{r^p}\\
		&\simeq\frac{\nu(B(z_0,r))}{r^{p-\beta/\eps}}=\frac{\nu(B(z_0,r))}{r^{\theta p}},
	\end{align*}
	which gives us \eqref{eq:vcap1}.  Note that in the last equality, we have used the relationship $\beta/\eps=p(1-\theta)$.
	
	Let $E:=B(z_0,r)\cap\partial Z$ and let $F:=B(z_0,2K^3r)\cap\partial Z\setminus B(z_0,2r)$.  Let $\{B_i\}_{i\in\N}$ be a cover of $E$ by 
	balls that also intersect $E$, such that $\rad(B_i)\le 2K^3r$.  For each $i\in\N$, there exists a ball $\wtil B_i$ 
	centered at $E$ such that $B_i\subset\wtil B_i$ and $\rad(\wtil B_i)=2\rad(B_i)$.  By the 5-covering lemma, there 
	exists a disjoint subcollection $\{\wtil B_i\}_{i\in I\subset\N}$ such that $E\subset\bigcup_{i\in I}5\wtil B_i$.  
	Thus, by the doubling property of $\mu_\beta$ and the $(\beta/\eps+\sigma)$-codimensional relationship between 
	$\mu_\beta$ and $\pi$, we have that
	\begin{align*}
		\sum_{i\in\N}\frac{\mu_\beta(B_i)}{\rad(B_i)^{\beta/\eps+\sigma}}\gtrsim\sum_{i\in I}\frac{\mu_\beta(5\wtil B_i)}{\rad(5B_i)^{\beta/\eps+\sigma}}\simeq\sum_{i\in I}\pi(5\wtil B_i)\ge\pi(E)\simeq\frac{\mu_\beta(B(z_0,2K^3r))}{(2K^3r)^{\beta/\eps+\sigma}}.
	\end{align*}    
	Since the collection $\{B_i\}_{i\in\N}$ is arbitrary, we have that 
	\begin{equation}\label{eq:H(E)}
		\Ha^{-(\beta/\eps+\sigma)}_{\mu_\beta,2K^3r}(E)\gtrsim\frac{\mu_\beta(B(z_0,2K^3r))}{(2K^3r)^{\beta/\eps+\sigma}}.
	\end{equation}
	
	Similarly, let $\{B_i\}_{i\in\N}$ be a cover of $F$ by balls such that $\rad(B_i)\le 2K^3r$.  Choosing $\wtil B_i\supset B_i$ as above, centered at $F$ with comparable radius to $B_i$, we use the doubling property of $\mu_\beta$ and codimensionality with respect to $\pi$ to obtain 
	\[
	\sum_{i\in\N}\frac{\mu_\beta(B_i)}{\rad(B_i)^{\beta/\eps+\sigma}}\gtrsim\sum_{i\in I}\frac{\mu_\beta(5\wtil B_i)}{\rad(B_i)^{\beta/\eps+\sigma}}\simeq\sum_{i\in I}\pi(5\wtil B_i)\ge\pi(F).
	\]
	Since $\partial Z$ is $K$-uniformly perfect, and since $\partial Z\setminus B(z_0,2K^3r)\ne\varnothing$, there exists $\zeta\in \partial Z\cap B(z_0,2K^2r)\setminus B(z_0,2Kr)$, and so it follows that $B(\zeta,2(K-1)r)\cap\partial Z\subset F$.  Using the codimensional relationship of $\mu_\beta$ and $\pi$ along with the doubling property of $\mu_\beta$ then gives us that 
	\[
	\sum_{i\in\N}\frac{\mu_\beta(B_i)}{\rad(B_i)^{\beta/\eps+\sigma}}\gtrsim\pi(B(\zeta,2(K-1)r)\cap\partial Z)\simeq\frac{\mu_\beta(B(\zeta,2(K-1)r))}{(2(K-1)r)^{\beta/\eps+\sigma}}\simeq\frac{\mu_\beta(B(z_0,2K^3r))}{(2K^3r)^{\beta/\eps+\sigma}}.
	\]
	Since $\{B_i\}_{i\in\N}$ is arbitrary, we then have that 
	\begin{equation}\label{eq:H(F)}
		\Ha^{-(\beta/\eps+\sigma)}_{\mu_\beta,2K^3r}(F)\gtrsim\frac{\mu_\beta(B(z_0,2K^3r))}{(2K^3r)^{\beta/\eps+\sigma}}.
	\end{equation}
	
	Let $u\in N^{1,p}(\oXeps,\mu_\beta)$ be such that $u=1$ on $B(z_0,r)\cap\partial Z=E$ and $u=0$ in $\oXeps\setminus B(z_0,2r)\supset F$.  Note that $\beta/\eps+\sigma<p$ by our assumption that $p>\sigma/\theta$, and since $\oXeps$ supports a $(1,1)$-Poincar\'e inequality, it supports a $(1,q)$-Poincar\'e inequality for all $1<q<\infty$.  Therefore, by \eqref{eq:H(E)} and \eqref{eq:H(F)}, 
	we apply \cite[Lemma~3.1]{EGKS} to obtain
	\begin{align*}
		\int_{\oXeps}g_u^p\,d\mu_\beta\ge\int_{B(z_0,2K^3r)}g_u^p\,d\mu_\beta\gtrsim\frac{\mu_\beta(B(z_0,r))}{r^p}\simeq\frac{\nu(B(z_0,r))}{r^{p-\beta/\eps}}=\frac{\nu(B(z_0,r))}{r^{\theta p}},
	\end{align*}
	where we have again used the doubling property of $\mu_\beta$ as well as the codimensional relationship between $\mu_\beta$ and $\nu$.   
	This inequality gives us \eqref{eq:vcap2}, as $u$ is an arbitrary admissible function for computing $\vcapXp(B(z_0,r)\cap\partial Z,\,B(z_0,2r))$.
\end{proof}

Using the previous lemmas, we now obtain the following boundary regularity result for the homogeneous Dirichlet problem.

\begin{thm}\label{thm:HomogeneousKellogg}
	Suppose that $\partial Z$ is uniformly perfect, and let $f\in C(\partial Z)\cap B^{\theta-\sigma/p}_{p,p}(\partial Z,\pi)$.  If $u\in B^\theta_{p,p}(\overline Z,\nu)$ is a solution to the homogeneous Dirichlet problem with boundary data $f$, then for all $z_0\in\partial Z$, we have that 
	\[
	\lim_{Z\ni z\to z_0}u(z)=f(z_0).
	\]
\end{thm}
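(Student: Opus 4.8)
The plan is to transfer the problem, via Lemma~\ref{lem:p-harmonic solutions}, from the nonlocal equation on $Z$ to the Dirichlet problem for $p$-harmonic functions on the uniform domain $\Om:=\oXeps\setminus\partial Z$, where the classical metric potential theory of \cite{BB} applies, and then to show that \emph{every} point of $\partial\Om=\partial Z$ is a regular boundary point. Since $Z$ is locally compact it is relatively open in the compact space $\overline Z$, so $\partial Z$ is closed in $\overline Z$ and hence in $\oXeps$; thus $\Om$ is open in $\oXeps$ and contains $Z$, every point of which is interior to $\Om$. By Lemma~\ref{lem:p-harmonic solutions} the $p$-harmonic extension $\overline u$ of $u$ is $p$-harmonic in $\Om$ — hence locally H\"older continuous there by \cite[Theorem~5.2]{KS} — and satisfies $T_{X,\partial}\overline u=f$ $\BCap_p^{\oXeps}$-q.e.\ on $\partial\Om$ (after discarding a $\pi$-null set), equivalently $\overline u-E_{X,\partial}f\in N^{1,p}_0(\Om)$; so $\overline u$ is the Sobolev-sense solution to the $p$-harmonic Dirichlet problem on $\Om$ with boundary data $f$. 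For $z\in Z\subset\Om$ the trace formula \eqref{eq:Tx-def} together with continuity of $\overline u$ at $z$ gives $u(z)=T_X\overline u(z)=\overline u(z)$, so it suffices to prove $\lim_{\Om\ni x\to z_0}\overline u(x)=f(z_0)$ for every $z_0\in\partial Z$.

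Next I would establish the uniform $p$-fatness of $\partial Z$. Combining the two estimates of Lemma~\ref{lem:vcap estimates} with the monotonicity of the variational capacity in its inner set, for every $z_0\in\partial Z$ and all sufficiently small $r>0$ one has
\[
c\le\frac{\vcapXp(B(z_0,r)\cap\partial Z,\,B(z_0,2r))}{\vcapXp(B(z_0,r),\,B(z_0,2r))}\le 1
\]
with $c>0$ independent of $z_0$ and $r$; that is, $\partial Z=\oXeps\setminus\Om$ is uniformly $p$-fat with respect to the potential theory of $(\oXeps,d_\eps,\mu_\beta)$. It follows that the Wiener integral
\[
\int_0^{r_0}\left(\frac{\vcapXp(B(z_0,t)\cap(\oXeps\setminus\Om),\,B(z_0,2t))}{\vcapXp(B(z_0,t),\,B(z_0,2t))}\right)^{1/(p-1)}\frac{dt}{t}
\]
diverges at every $z_0\in\partial\Om$, so the sufficiency half of the Wiener criterion for domains in $\oXeps$ (\cite[Chapter~11]{BB}) shows every $z_0\in\partial\Om$ is regular; equivalently, combining the Kellogg property with the uniform $p$-fatness just obtained, the results of \cite{BjMacSh} give that $\partial\Om$ has no irregular points.

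To finish, since $f\in C(\partial Z)$ is the trace of the Newtonian function $E_{X,\partial}f\in N^{1,p}(\oXeps)$, it is resolutive for $\Om$, and therefore the Sobolev solution $\overline u$ of the first step coincides with the Perron solution $P_\Om f$ (\cite[Chapter~10]{BB}). At a regular boundary point the Perron solution of a continuous boundary function attains that function as a limit from inside the domain, so by the previous paragraph $\lim_{\Om\ni x\to z_0}\overline u(x)=f(z_0)$ for every $z_0\in\partial Z$; combined with $u=\overline u$ on $Z$ this gives $\lim_{Z\ni z\to z_0}u(z)=f(z_0)$, as claimed.

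The main work — and the only genuinely new ingredient — is the capacity comparison of Lemma~\ref{lem:vcap estimates}: deducing uniform $p$-fatness of $\partial Z$ from its uniform perfectness and the $(\beta/\eps+\sigma)$-codimensional relationship between $\mu_\beta$ and $\pi$, which is exactly where the hypothesis $p>\sigma/\theta$ (giving $\beta/\eps+\sigma<p$) is used. Granting that estimate, the remainder is an assembly of known results for $p$-harmonic functions on $\oXeps$; the points needing some care are the bookkeeping that identifies $\overline u$ as both the Sobolev and the Perron solution (matching boundary traces up to capacity-null sets) and the observation that $Z$ sits in the interior of $\Om$, so that the solution $u$, restricted to $Z$, is literally the continuous function $\overline u|_Z$.
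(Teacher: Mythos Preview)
Your proof is correct and follows essentially the same route as the paper: transfer to the $p$-harmonic Dirichlet problem on $\Om=\oXeps\setminus\partial Z$ via Lemma~\ref{lem:p-harmonic solutions}, use Lemma~\ref{lem:vcap estimates} to obtain uniform $p$-fatness and hence divergence of the Wiener integral at every point of $\partial Z$, and invoke the metric Wiener criterion (\cite[Theorem~11.24]{BB} or \cite{BjMacSh}) to conclude boundary regularity. The only cosmetic difference is that you pass explicitly through the identification of the Sobolev solution with the Perron solution, whereas the paper cites the Wiener-criterion theorem directly for the Sobolev solution; the substance is the same.
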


\begin{proof}
	By Lemma~\ref{lem:vcap estimates}, it follows that each $z_0\in\partial Z$ satisfies the Wiener criterion with respect to the domain $\Om=\oXeps\setminus\partial Z$:
	\begin{align*}
		\int_{0}^{\delta}\left(\frac{\vcapXp(B(z_0,r)\cap\partial Z,\, B(z_0,2r))}{\vcapXp(B(z_0,r),B(z_0,2r))}\right)^{1/(p-1)}\frac{dr}{r}\gtrsim\int_0^\delta\frac{1}{r}dr=\infty,
	\end{align*}
	where $\delta=\diam(\partial Z)/(4K^3)$ with $K\ge 2$ the uniform perfectness constant of $\partial Z$.  Therefore, 
	by~\cite[Theorem~11.24]{BB} or~\cite{BjMacSh} for example, it follows that for each $z_0\in\partial Z$, the following holds: for 
	every $f\in C(\partial Z)\cap B^{\theta-\sigma/p}_{p,p}(\partial Z,\pi)$, if $v$ is $p$-harmonic in 
	$\oXeps\setminus\partial Z$ with $T_{X,\partial} v=f$ $\BCap_p^{\overline{X}_\eps}$-a.e.~on $\partial Z$,  then 
	\[
	\lim_{\oXeps\setminus\partial Z\ni x\to z_0}v(x)=f(z_0).
	\]
	For such $f$, we know from Lemma~\ref{lem:p-harmonic solutions} that if $u\in B^\theta_{p,p}(\overline Z,\nu)$ is 
	a solution to the homogeneous Dirichlet problem with boundary data $f$, then $\overline u$ is $p$-harmonic 
	in $\oXeps\setminus\partial Z$ and $T_{X,\partial}\overline u=f$ $\BCap_p^{\overline{X}_\eps}$-a.e.~in $\partial Z$. 
	As $\overline{u}$ is continuous on $\Om=\overline{X}_\eps\setminus\partial Z$ by~\cite{KS}, it follows
	from the continuity of $f$ that $\overline u$ is continuous on $\overline{X}_\eps$.
	Hence, since $u=T_X\overline u$ on $Z$, it follows that 
	\[
	\lim_{Z\ni z\to z_0}u(z)=f(z_0).\qedhere
	\]  
\end{proof}

\vskip .5cm

\noindent{\bf Supporting data:} The research outcomes described in this paper does not depend on supporting data nor does it generate
supporing data sets.

%
%
%
%
%
%
%
%
%

\end{document}